\theoremstyle{plain}
\newtheorem{theorem}{Theorem}[section]
\newtheorem{lemma}[theorem]{Lemma}
\newtheorem{proposition}[theorem]{Proposition}
\newtheorem{corollary}[theorem]{Corollary}
\theoremstyle{definition}
\newtheorem{remark}[theorem]{Remark}
\newtheorem{notation}[theorem]{Notation}
\newtheorem{definition}[theorem]{Definition}
\numberwithin{equation}{section}
\newcommand{\cA}{\mathcal A}
\newcommand{\cB}{\mathcal B}
\newcommand{\cD}{\mathcal D}
\newcommand{\cE}{\mathcal E}
\newcommand{\cF}{\mathcal F}
\newcommand{\cG}{\mathcal G}
\newcommand{\cH}{\mathcal H}
\newcommand{\cI}{\mathcal I}
\newcommand{\cL}{\mathcal L}
\newcommand{\cM}{\mathcal M}
\newcommand{\cN}{\mathcal N}
\newcommand{\cO}{\mathcal O}
\newcommand{\cP}{\mathcal P}
\newcommand{\cQ}{\mathcal Q}
\newcommand{\cS}{\mathcal S}
\newcommand{\cT}{\mathcal T}
\newcommand{\cU}{\mathcal U}
\newcommand{\cW}{\mathcal W}
\newcommand{\cX}{\mathcal X}
\newcommand{\cZ}{\mathcal Z}
\newcommand{\ccA}{\mathscr{A}}
\newcommand{\ccB}{\mathscr{B}}
\newcommand{\ccC}{\mathscr{C}}
\newcommand{\ccE}{\mathscr{E}}
\newcommand{\al}{\alpha}
\newcommand{\be}{\beta}
\newcommand{\ga}{\gamma}
\newcommand{\Ga}{\Gamma}
\newcommand{\de}{\delta}
\newcommand{\la}{\lambda}
\newcommand{\La}{\Lambda}
\newcommand{\si}{\sigma}
\newcommand{\om}{\omega}
\newcommand{\Om}{\Omega}
\newcommand{\RR}{\mathbb R}
\newcommand{\rar}{\rightarrow}
\newcommand{\ve}{\varepsilon}
\newcommand{\id}{\operatorname{id}}
\newcommand{\dive}{\operatorname{div}}
\newcommand{\curl}{\operatorname{curl}}
\newcommand{\p}{\parallel}
\newcommand{\psQ}{\mathcal{Q}^{(3)}(\partial^2 f, \partial^3)}
\newcommand{\psq}{\mathcal{Q}^{(2)}(\partial^2 f, \partial^2)}
\newcommand{\pres}{\pi} 
\newcommand{\Emb}{\text{Emb}}
\newcommand{\Rin}{\overline{R}} 
\newcommand{\IntDom}{\underset{0 \leq t \leq T}{\bigcup} \{t\} \times \Om(t)}
\newcommand{\mss}{\hspace{0.2cm}}
\title[Surface tension]{The free boundary Euler equations with large surface tension}
\author[Disconzi]{ \vspace{-0.5cm} Marcelo M. Disconzi}
\address{ \vspace{-0.5cm} Department of Mathematics\\
Vanderbilt University\\ Nashville, TN, USA}
\email{marcelo.disconzi@vanderbilt.edu}
\thanks{Marcelo M. Disconzi is partially supported by NSF grant
1305705.}
\author[Ebin]{ \vspace{-0.5cm} David G. Ebin}
\address{ \vspace{-0.5cm} Department of Mathematics\\
Stony Brook University\\ Stony Brook, NY, USA}
\email{ebin@math.sunysb.edu}
\begin{document}

\begin{abstract}
We study the free boundary Euler equations with surface tension 
in three spatial dimensions, showing that the equations are well-posed if 
the coefficient of surface tension is positive.   Then we prove that
under natural assumptions, the solutions
of the free boundary motion converge to solutions of the Euler equations 
in a domain with fixed boundary when the coefficient of surface tension tends to infinity.
\end{abstract}
\keywords{large surface tension, free boundary, Euler equations, incompressible fluid}

\maketitle

\vspace{-1.2cm}

\tableofcontents

\section{Introduction \label{intro}}

Consider the initial value problem for the motion of an incompressible inviscid fluid 
with free boundary whose equations of motion are 
given in Lagrangian coordinates by (see below for the equations in Eulerian
coordinates)
\begin{subnumcases}{\label{free_boundary_full}}
 \ddot{\eta}  = - \nabla p \circ \eta   & in $ \Om$, \label{basic_fluid_motion_full} \\
  \operatorname{div} (u) = 0  & in  $\eta(\Om)$, \label{equation_p_full} \\
 \left. p \right|_{\partial \eta(\Om)} = \kappa \cA   & on  $\partial \eta(\Om)$, \label{bry_p_full} \\
 \eta(0) = \id,~\dot{\eta}(0)=u_0,
\end{subnumcases}
\noindent where
 $\Om$ is a domain in $\RR^n$; $\eta(t,\cdot)$ is, for each $t$, a volume 
preserving embedding $\eta(t):\Om \rar \RR^n$ representing the fluid 
motion, with $t$ thought of as the time variable ($\eta(t, x)$ is the
 position at time $t$ of the fluid 
particle that at time
zero was at $x$); $``\,\dot{~}\,"$ denotes 
derivative with respect to $t$; $\Om(t) = \eta(t)(\Om)$;
$u: \Om(t) \rar \RR^n$ is a 
vector field on $\Om(t)$ defined by $u = \dot{\eta} \circ \eta^{-1}$
(it represents the fluid velocity); 
$\cA$ is the mean curvature of the boundary of the domain $\Om(t)$; $p$ is a real valued function
on $\Om(t)$
called the pressure; finally,
$\kappa$ is a non-negative constant known as the coefficient of surface tension.
$\id$ denotes the identity map, $u_0$ is a given
divergence free vector field on $\Om$, 
 and 
$\operatorname{div}$ means divergence.
The unknowns are the fluid motion
$\eta$ and the pressure $p$, but notice that the system (\ref{free_boundary_full}) is coupled in a non-trivial fashion in the
sense that the other quantities appearing in (\ref{free_boundary_full}), namely $u$, $\cA$, and $\Om(t)$, depend 
explicitly or implicitly on $\eta$ and $p$.

With suitable assumptions, we shall prove 
the following result, concerning the 
existence of solutions to (\ref{free_boundary_full}) and
the behavior of solutions when the coefficient of  surface tension is large, i.e., in the limit 
$\kappa \rar \infty$. A precise statement is given in theorem
 \ref{main_theorem} below. 

\vskip 0.25cm
\textbf{Theorem (Main Result, see theorem \ref{main_theorem} for precise statements).}  
{\it Under appropriate conditions on the initial condition $u_0$ and on 
$\partial \Om$, we have:

1) If $\kappa > 0$, then
(\ref{free_boundary_full}) is well posed.

2) Consider a family 
of initial conditions $u_{0\kappa}$ parametrized 
by the coefficient of surface tension that converges,
when $\kappa \rar \infty$,  to 
a divergence free and tangent to the boundary vector field $\vartheta_0$.
Then, the corresponding solutions $\eta_\kappa$
 to (\ref{free_boundary_full}) 
converge to the solution of the incompressible Euler equations on the fixed domain $\Om$, given by
\begin{subnumcases}{\label{Euler}}
 \ddot{\zeta}  = - \nabla \pres \circ \zeta,
\label{Euler_edo} \\
 \dive(\dot{\zeta}\circ \zeta^{-1}) = 0,
\label{Euler_div}
\label{Euler_incompressible}  \\
\zeta(0) = \id,~\dot{\zeta}(0) = \vartheta_0.
\label{Euler_initial_conditions}
\end{subnumcases}
\noindent Here, $\zeta(t, \cdot)$ is, for each $t$, a volume preserving 
diffeomorphism  $\zeta(t): \Om \rar \Om$.
}
\vskip 0.25cm
\begin{remark}
It is well known that the pressure $\pres$ in the incompressible Euler equations is not an independent
quantity, since it is completely determined by the velocity vector field $\vartheta = \dot{\zeta}\circ\zeta^{-1}$ (see, e.g., \cite{EM}).  
\end{remark}

We remind the reader that in Eulerian 
coordinates, equations (\ref{free_boundary_full}) and (\ref{Euler}) take,
respectively, the following forms:
\begin{gather}
\begin{cases}
\frac{\partial u}{\partial t} + \nabla_u u  = - \nabla p  
  & \text{ in } \IntDom , \\
 \dive(u) = 0  & \text{ in  }  \Om(t), \\
  p  = \kappa \cA   & \text{ on } \partial \Om(t),  \\
 u(0) = u_0,
\end{cases}
\end{gather}
and
\begin{gather}
\begin{cases}
\frac{\partial \vartheta}{\partial t} + \nabla_\vartheta \vartheta 
 = -\nabla \pres
  & \text{ in }  [0,T]\times\Om, \\
 \dive(\vartheta) = 0  & \text{ in }  \Om,  \\
   \langle \vartheta, \nu \rangle = 0   & \text{ on }  \partial \Om,  \\
 \vartheta(0) = \vartheta_0,
 \end{cases}
\end{gather}
where in the free boundary case, $p$, $\kappa$, $\cA$ and $\pi$, 
are as before, $u$ is the velocity field with $u_0$ as its initial value.
In the fixed boundary case $u$ is replaced by $\vartheta$ which has an initial value
$\vartheta_0.$ The other symbols have the same meaning,  except $\nu$ which is
the unit outer normal
to $\partial \Om$.  In this case, our theorem states that $u$ converges to $\vartheta$ as $\kappa$ goes to infinity.

In Eulerian coordinates, the free boundary Euler equations also carry a boundary condition
stating that the normal speed of the moving boundary equals
to $\langle u, N \rangle$,
where $N$ is the unit outer normal to $\partial \Om(t)$. This is the same as saying 
that the vector field $\partial_t + \nabla_u$ is tangent to $\IntDom$.

In order to state the main result, we need to introduce some definitions.
Given manifolds $M$ and $N$, denote by $H^s(M,N)$
the space of maps of Sobolev class $s$ between $M$ and $N$; that is, maps with derivatives up to order $s$ in $L^2$. 
For $s > \frac{n}{2} +2$ define
\begin{gather}
 \cE_\mu^s(\Om)= \cE_\mu^s  = \Big \{ \eta \in H^s(\Om,\RR^n) ~ \Big | ~ J(\eta) = 1, \eta^{-1} 
\text{ exists and belongs to }  H^s( \eta(\Om), \Om) \Big \},
\nonumber
\end{gather}
\noindent where $J$ is the Jacobian. $\cE_\mu^s(\Om)$ is 
therefore the space of $H^s$-volume-preserving embeddings of $\Om$ into $\RR^n$. 
Define also
\begin{gather}
 \cD_\mu^s(\Om)=  \cD_\mu^s = \Big \{ \eta \in H^s(\Om,\RR^n) ~ \Big | ~ J(\eta) = 1, \eta: \Om \rar \Om  
\text{ is bijective and $\eta^{-1}$ belongs to }  H^s \Big \},
\nonumber
\end{gather}
\noindent so that $\cD^s_\mu(\Om)$ is the space of $H^s$-volume-preserving diffeomorphisms of $\Om$.
Notice that $\cD_\mu^s(\Om) \subseteq \cE_\mu^s(\Om)$.

Let $\cB_{\de_0}^{s+2}(\partial \Om)$ be the open ball about zero  of radius $\de_0$ inside $H^{s+2}(\partial \Om)$. We shall prove that if $\de_0$ is sufficiently small, then the map
\begin{align}
& \varphi: \cB_{\de_0}^{s+2}(\partial \Om) \rar H^{s+\frac{5}{2}}(\Om), 
\nonumber \\
& \varphi(h) = f,
\nonumber
\end{align}
where $f$ satisfies 
\begin{subnumcases}{\label{jac_nl}}
J ( \id + \nabla f) = 1 & in $\Om$,  \label{jac_nl_int} \\
f = h & on $\partial \Om$, \label{jac_nl_bry}
\end{subnumcases}
is a well defined $C^1$ map, and $\varphi(\cB_{\de_0}^{s+2}(\partial \Om))$
is a smooth submanifold of $H^{s + \frac{5}{2}}(\Om)$.

 We note that the map $\varphi$ solves a non-linear analog of the Dirichlet problem, that of extending $h$
  from $\partial \Om$ to a  function on $\Om$.  In fact if $\Omega \subseteq \RR^3$ with standard coordinates then (\ref{jac_nl_int}) can be written 
\begin{gather}
\Delta f + f_{xx}f_{yy} + f_{xx}f_{zz} + f_{yy}f_{zz} - f_{xy}^2 - f_{xz}^2 
- f_{yz}^2 + \det (D^2 f) = 0,
\nonumber
\end{gather}
so the difference between (\ref{jac_nl}) and the Dirichlet problem are the non-linear terms, which will be shown to be small. The purpose of (\ref{jac_nl_int}) is to ensure that $\id + \nabla f$ is volume preserving.   

 Using $\varphi$ we then 
construct another map
\begin{align}
\begin{split}
& \Phi: \cD_\mu^s(\Om) \times \varphi(\cB_{\de_0}^{s+2}(\partial \Om)) \rar \cE_\mu^s(\Om), 
 \\
{\rm defined \; by} \:\:\:\;& \Phi(\beta , f) = (\id + \nabla f) \circ \beta.
\end{split}
\label{big_phi}
\end{align}
Thus $\Phi(\beta,f)$ is the composition of two volume preserving maps.

We define $\mathscr{E}_\mu^s(\Om) \subseteq \cE_\mu^s(\Om)$ by
\begin{gather}
\mathscr{E}_\mu^s(\Om)
 = \Phi\big( \cD_\mu^s (\Om) \times \varphi(\cB_{\de_0}^{s+2}(\partial \Om))  \big).
\nonumber
\end{gather}
Notice that  since $\beta \in \cD_\mu^s(\Om)$, we have $\beta(\partial \Om) = \partial \Om$. 
Therefore, solutions $\eta$ to (\ref{free_boundary_full}) that belong
to $\ccE^s_\mu(\Om)$ decompose to a part fixing the boundary and
a boundary displacement, i.e., 
\begin{gather}
\eta = (\id + \nabla f) \circ \beta.
\label{decomp_eta}
\end{gather}
The decomposition (\ref{decomp_eta}) is one of the main ingredients 
of our proof, and establishing that $\eta\rar \zeta$ as $\kappa \rar \infty$
will be done by showing that the boundary displacement $\nabla f$ goes to zero
as $\kappa \rar \infty$.
We shall also show that, under our hypotheses,  $\nabla f$ is in fact $\frac{3}{2}$ degree smoother
than $\eta$ (though $\beta$ is as regular as $\eta$). In this sense,
we work with embeddings which
have smoother boundary values. 
A more detailed discussion for the motivation
for introducing $\ccE^s_\mu(\Om)$ is given in \cite{DE2d}.

We are now ready to state our main result. 
The usual 
decomposition of a vector field $X$ into its gradient part, $QX$,  and
 divergence free and tangent to the boundary part, $PX$,
 which appears in the hypotheses of the theorem, is reviewed in section \ref{auxiliary}. We note that if $X$ is not tangent to the boundary, 
we will have $QX \neq 0$ even if  $\dive(X) =0$. We let $\p \cdot \p_s$ denote the Sobolev norm.
We shall state and prove the theorem only in  three-spatial dimensions, since this is the case
of primary interest. We point out, however, that basically the same proof works in higher dimensions, 
except that the calculations  increase significantly in complexity.

\begin{theorem}
Let  $s > \frac{3}{2} +2$,  $\Om$ be a bounded 
domain in $\RR^3$ with a smooth 
boundary
and $u_0 \in H^{s}(\Om, \RR^3)$ be
a divergence free
vector field. Denote 
by $Q u_0$ the gradient part of $u_0$. 

1) If 
 $\kappa > 0$, then
there exist a $T_\kappa > 0$ and a unique
solution 
$(\eta_\kappa, p_\kappa)$ to (\ref{free_boundary_full}),
with initial condition $u_0.$ The solution satisfies:
\begin{align}
\begin{split}
&\eta_\kappa \in  C^0  ( [0,T_\kappa), \cE_\mu^{s}(\Om)   ), \,
\dot{\eta}_\kappa \in L^\infty( [0,T_\kappa), H^{s}(\Om) ), \, 
\ddot{\eta}_\kappa \in L^\infty( [0,T_\kappa), H^{s-\frac{3}{2}}(\Om) ), \\
& p_\kappa \in L^\infty([0,T_\kappa), H^{s-\frac{1}{2}}(\Om_\kappa(t))),
\, \text{ where } \, \Om_\kappa(t) = \eta_\kappa(t)(\Om).
 \end{split}
 \nonumber
\end{align}
Moreover, 
for each $t \in [0,T_\kappa)$, 
\begin{gather}
\eta_\kappa(t) \; {\rm is \; in} \; \ccE_\mu^{s}(\Om).
\nonumber
\end{gather}

2) Let $\{ u_{0\kappa} \} \subset H^{s}(\Om, \RR^3)$ be a family
of divergence free vector fields parametrized by
the coefficient of surface tension $\kappa$, satisfying $\p Q u_{0\kappa} \p_s 
\leq \frac{C}{\sqrt{\kappa}}$ for some constant $C$, and such 
that $u_{0\kappa}$ converges in $H^s(\Om, \RR^3)$, as $\kappa \rar \infty$, to a divergence free
 vector field $\vartheta_0$ which is  tangent to the boundary. 
Let
$\zeta \in C^1 \big ( [0,T], \cD_\mu^{s}(\Om) \big ) $ be the  
solution to 
(\ref{Euler}) with initial 
condition $\vartheta_0$, defined on some\footnote{Notice
that $[0,T)$ is not a maximal interval of existence since the solution
$\zeta$ exists on the closed interval $[0,T]$.  It can however be arbitrarily close to the maximum.} time interval $[0,T]$.
Assume that the mean curvature of $\partial \Om$ is constant, and 
 let $(\eta_\kappa,p_\kappa)$ be the solution to (\ref{free_boundary_full})
with initial condition $u_{0\kappa}$ and defined 
on a time interval $[0, T_\kappa)$, as stated in part (1) above.
Finally, assume that $[0,T_\kappa)$ is taken as the maximal interval 
of existence for the solution $(\eta_\kappa,p_\kappa)$.
Then, if $T$ is sufficiently small,  we find that 
$T_\kappa \geq T$ for all $\kappa$ sufficiently large, and 
$\eta_\kappa(t) \rar \zeta(t)$ as a continuous 
curve in $\cE^{s}_\mu(\Om)$ as $\kappa \rar \infty$.
Also, $\dot{\eta}_\kappa(t) \rar \dot{\zeta}(t)$ in $H^s(\Om)$ 
as $\kappa \rar \infty$.
\label{main_theorem}
\end{theorem}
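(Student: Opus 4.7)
The plan is to exploit the decomposition $\eta_\kappa = (\id + \nabla f_\kappa)\circ \beta_\kappa$ arising from the product structure of $\ccE_\mu^s(\Om)$ near $\id$, which splits the motion into a part $\beta_\kappa \in \cD_\mu^s(\Om)$ fixing $\partial\Om$ and a boundary displacement $\nabla f_\kappa$, and then to derive $\kappa$-weighted energy estimates that both close for each fixed $\kappa > 0$ and quantitatively force $\nabla f_\kappa \rar 0$ as $\kappa \rar \infty$. For Part (1) I would substitute this decomposition into (\ref{free_boundary_full}), express the pressure as the solution of the Dirichlet problem $\Delta p_\kappa = -\operatorname{tr}((Du_\kappa)^2)$ on $\Om_\kappa(t)$ with datum $\kappa \cA_\kappa$, and project the resulting Newton equation onto the $\beta_\kappa$- and $f_\kappa$-directions using the geometry of $\ccE_\mu^s$. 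The basic conservation law
\begin{equation*}
E_\kappa(t) = \tfrac12\,\|\dot\eta_\kappa\|_{L^2(\Om)}^2 + \kappa\bigl(\operatorname{Area}(\partial\Om_\kappa(t))-\operatorname{Area}(\partial\Om)\bigr)
\end{equation*}
yields the basic a priori bound; top-order estimates follow by commuting $s$ derivatives through the equation, using elliptic regularity for the Dirichlet--to--Neumann operator on $\partial\Om_\kappa(t)$, and exploiting that $\nabla f_\kappa$ is $3/2$ degrees smoother than $\eta_\kappa$. A standard iteration/mollification scheme then delivers a unique solution with the stated regularity on some maximal interval $[0,T_\kappa)$.

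For Part (2) the decisive hypothesis is the constant mean curvature of $\partial\Om$: by Alexandrov's theorem $\partial\Om$ must be a round sphere, which is a strictly stable critical point of area among volume-preserving variations. Since $\id + \nabla f_\kappa$ is volume preserving by (\ref{jac_nl_int}), the second variation of area gives a coercive estimate
\begin{equation*}
\operatorname{Area}(\partial\Om_\kappa(t)) - \operatorname{Area}(\partial\Om) \;\geq\; c\,\|h_\kappa\|_{H^{1/2}(\partial\Om)}^2 \;-\; C\,\|h_\kappa\|_{H^{1/2}(\partial\Om)}^3,
\end{equation*}
where $h_\kappa = f_\kappa|_{\partial\Om}$. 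The hypothesis $\|Qu_{0\kappa}\|_s \leq C/\sqrt{\kappa}$ is precisely what makes the top-order energy $O(1)$ uniformly in $\kappa$ at $t=0$: the boundary is initially undisturbed, so only the gradient part of $u_{0\kappa}$ drives subsequent boundary motion, which is the multiplier that would otherwise load the $\kappa$-weighted surface-tension terms with factors of $\sqrt{\kappa}$. Combining conservation of the top-order energy with the coercive area bound and elliptic regularity of $\varphi$ one extracts $\|\nabla f_\kappa\|_{H^{s+1/2}(\Om)} \leq C/\sqrt{\kappa}$ together with a $\kappa$-uniform bound on $\beta_\kappa$ in $C^0([0,T],\cD_\mu^s)$ for some $T$ independent of $\kappa$. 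Maximality of $[0,T_\kappa)$ then forces $T_\kappa \geq T$, and compactness extracts a subsequential limit $\beta_\infty$ of $\beta_\kappa$ solving the Lagrangian Euler equations on $\Om$ with initial data $\vartheta_0$; uniqueness for (\ref{Euler}) identifies $\beta_\infty = \zeta$, and $\eta_\kappa \rar \zeta$ follows from $\nabla f_\kappa \rar 0$, with $\dot\eta_\kappa \rar \dot\zeta$ after differentiating the decomposition.

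The main obstacle will be the $\kappa$-independent top-order energy estimate. The surface-tension contribution to the $s$-th order energy contains terms of the formal form $\kappa \int_{\partial\Om_\kappa}\cA_\kappa \,\partial^{s-1}(\text{boundary velocity})\,dS$, a priori of size $\sqrt{\kappa}$, which would destroy the estimate if taken at face value. The route around this is to write $\kappa\cA_\kappa = \kappa\cA_0 + \kappa(\cA_\kappa-\cA_0)$: the first piece is a constant times an integral proportional to $\partial_t\operatorname{Area}$ and therefore produces only null-Lagrangian or perfect-derivative contributions, while the second is controlled by $\kappa\cdot\|h_\kappa\| = \sqrt{\kappa}\cdot O(1)$ a priori but in fact $O(1)$ via the bootstrap bound $\|h_\kappa\| = O(1/\sqrt{\kappa})$. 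Making this cancellation rigorous at top order, handling the $P,Q$-commutators it produces, and closing by Gronwall is where the bulk of the technical work goes --- and it is precisely what is enabled by combining the constant mean curvature assumption with the smallness hypothesis $\|Qu_{0\kappa}\|_s \leq C/\sqrt{\kappa}$.
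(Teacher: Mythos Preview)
Your plan shares the paper's key structural inputs --- the decomposition $\eta_\kappa = (\id + \nabla f_\kappa)\circ\beta_\kappa$ and the observation that constancy of $\cA_{\partial\Om}$ kills the dangerous zeroth-order $\kappa\cA_{\partial\Om}$ term --- but the technical route is genuinely different from the paper's, and the step you yourself flag as the main obstacle is not resolved.

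The paper does \emph{not} proceed via $\kappa$-weighted energy estimates or the second variation of area. Instead it derives an explicit evolution equation for $h_\kappa = f_\kappa|_{\partial\Om}$ of the schematic form $\ddot h_\kappa + \kappa\,\cL\, h_\kappa = (\text{lower order})$, where $\cL = -\overline{\Delta}\partial_\nu - \tfrac12\cA_{\partial\Om}\overline{\Delta}$ is shown (via a direct integration-by-parts argument, not Alexandrov) to be a positive, invertible third-order pseudo-differential operator on $\partial\Om$. Setting $\cS = \cL^{1/2}$ and $z = (\sqrt{\kappa}\,\cS h_\kappa,\,\dot h_\kappa)$ turns this into a first-order system $\dot z + A_\kappa(t) z = \cG$ whose principal part is $\sqrt{\kappa}$ times a skew-symmetric operator; Kato's evolution-operator theory then gives $\kappa$-uniform bounds on the propagator, and Duhamel plus $\|A_\kappa^{-1}\| \lesssim 1/\sqrt{\kappa}$ yields $\|h_\kappa\|_{s+2,\partial} \leq C/\kappa$ (stronger than your $1/\sqrt{\kappa}$) and $\|\dot h_\kappa\|_{s+\frac12,\partial} \leq C/\sqrt{\kappa}$. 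A Schauder fixed point handles the $f$-dependence of the coefficients. Existence of $\eta_\kappa$ is then built by a separate iteration in which the $\beta$-part is controlled through a vorticity equation for $w = Pu$; equation (\ref{system_f_v_v_dot}) is explicitly \emph{not} used, because the pressure need not converge.

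Your proposed route --- top-order control of $h_\kappa$ from a commuted energy plus the coercive second variation of area --- has a gap exactly where you indicate. The area second variation is coercive only in a low norm (roughly $H^{1/2}(\partial\Om)$), so it cannot by itself supply $H^{s+2}$ control. Your bootstrap sketch does not close at top order: $\cA_\kappa - \cA_0$ is second order in $h_\kappa$, so after commuting $s$ derivatives the surface-tension term still carries $\kappa$ times \emph{top} derivatives of $h_\kappa$, and a $1/\sqrt{\kappa}$ bound on $h_\kappa$ in a lower norm does not absorb this. The paper sidesteps the issue entirely by treating $\kappa\cL$ as the principal part of a well-posed dispersive equation rather than as a perturbation to be estimated. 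For the convergence step the paper also argues differently from your compactness-plus-uniqueness outline: it obtains a direct Gronwall estimate for $\|\dot\zeta - \dot\eta_\kappa\|_s$ using the Ebin--Marsden smoothness of $(\xi,X)\mapsto (Q(\nabla_{X\circ\xi^{-1}}X\circ\xi^{-1}))\circ\xi$ on $T\cD_\mu^s$, combined with the $O(1/\sqrt{\kappa})$ bounds on $\nabla f_\kappa$ already in hand.
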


\begin{remark} 
We stress that $\zeta$ in the theorem exists and is unique by \cite{EM}. 
\end{remark}

\begin{remark}
Note that since $\eta_\kappa(t) \in \ccE_\mu^{s}(\Om)$, in particular
 $\eta_\kappa$ satisfies decomposition (\ref{decomp_eta}), and 
$\Om(t)$ has a $H^{s+1}$-regular boundary.
\end{remark}

Since existence of solutions to (\ref{free_boundary_full}) has already been established 
in the literature (see \cite{CS,Lin} and comments below), the main result of this paper is the study 
of the singular limit
$\kappa \rar \infty$ and the corresponding convergence of solutions, i.e., 
part (2) of theorem \ref{main_theorem}.
It should be stressed that, in this regard, the \emph{hypothesis of constant 
mean curvature at time zero
cannot be removed}. Indeed, on physical grounds, 
if the mean curvature $\cA_{\partial \Om}$ of $\partial \Om$ 
is not constant, one expects that $\Om(t)$ will develop
high-frequency oscillation for large $\kappa$, but solutions
will not converge in the limit $\kappa \rar \infty$. This is because
the dynamics  $\eta_\kappa$ 
can be thought of as mimicking the behavior of motions with
a strong constraining force, as we explain in section
\ref{geometric} (see also remark \ref{remark_constant_mean_curvature}).
We also notice that, from the point of view of $\kappa \rar \infty$, the assumption that $Q u_0$
be small in part (2) of theorem \ref{main_theorem} is natural, since $Q u_0$ has to be small
if $u_0$ is near $\vartheta_0$.

Despite the fact that, as mentioned above, well-posedness of (\ref{free_boundary_full}) is known in the literature,
our methods are entirely different than previous
works, thus of independent interest.

\begin{remark}
In this paper we treat exclusively the three-dimensional case, but the same proof works in $n=2$. 
In fact, the calculations of section \ref{section_geom_boundary} simplify in two dimensions.
\end{remark}

\begin{remark}
In terms of the more familiar Eulerian coordinates,
theorem \ref{main_theorem} asserts, in particular,
the convergence of $u_\kappa(t) \circ \eta_\kappa(t)$ to 
$\vartheta(t) \circ \zeta(t)$
in $H^s(\Om)$. 
We believe, however, that in 
this problem the statement
in Lagrangian coordinates looks more natural. This is because  $u_\kappa$ and $\vartheta$ are defined in different domains, and therefore only the convergence
of $u_\kappa \circ \eta_\kappa$ to $\vartheta \circ \zeta$, and not
of $u_\kappa$ to $\vartheta$, makes sense. In other words, to state
the convergence in Eulerian coordinates, we still need to invoke 
the flows $\eta_\kappa$ and $\zeta$. 
\end{remark}

\begin{remark}
We  point out that, due to a classical result 
of Aleksandrov \cite{Ale}, if $\cA_{\partial \Om}$ is constant then
$\partial \Om$ is a sphere. This fact, however, is not used directly in our
proof. Had we not known of Aleksandrov's result,
our proof would still follow solely from the 
constancy of $\cA_{\partial \Om}$. In particular,  we expect theorem
\ref{main_theorem} to hold in the more general situation 
where $\partial \Om$  is the boundary of a region inside a Riemannian 
manifold with constant mean curvature with respect to the 
corresponding Riemannian metric.
\end{remark}

Theorem \ref{main_theorem} gives some interesting insight into the structure
of solutions to the free boundary Euler equations: by uniqueness, any solution
to (\ref{free_boundary_full}) satisfying our hypotheses will obey decomposition 
(\ref{decomp_eta}), regardless of the method employed to construct such solutions.

As already pointed out, one wishes to show that $\nabla f_\kappa$ in decomposition (\ref{decomp_eta})
goes to zero when $\kappa \rar \infty$ in order to establish theorem \ref{main_theorem}.
Since in (\ref{decomp_eta}) $\be_\kappa \in \cD_\mu^s(\Om)$, one expects that not only
does $\eta_\kappa \rar \zeta$, but  $\beta_\kappa \rar \zeta$ as well. This is in fact the case:

\begin{corollary}
With the same assumptions and notation of theorem \ref{main_theorem}, consider the
convergence $\eta_\kappa \rar \zeta$. Then we also get
$\beta_\kappa(t) \rar \zeta(t)$ and $\dot{\be}_\kappa(t) \rar \dot{\zeta}(t)$ in $H^s(\Om)$,
 $\nabla f_\kappa(t) \rar 0$ in $H^{s+\frac{3}{2}}(\Om)$, and $\nabla \dot{f}_\kappa(t)
\rar 0$ in $H^s(\Om)$.
\label{corollary_convergence}
\end{corollary}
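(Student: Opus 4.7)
The strategy is to revisit the decomposition $\eta_\kappa(t) = (\id + \nabla f_\kappa(t)) \circ \beta_\kappa(t)$ and exploit that $\zeta(t) \in \cD_\mu^s(\Om)$ carries the trivial decomposition $\zeta(t) = \Phi(\zeta(t), 0)$. The plan is to show that the convergence $\eta_\kappa(t) \to \zeta(t)$ from Theorem~\ref{main_theorem} transfers, via continuity of $\Phi^{-1}$ near $(\zeta(t), 0)$, to all the decomposed quantities at the stated Sobolev levels.

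For $\nabla f_\kappa(t) \to 0$ in $H^{s+3/2}(\Om)$, I would extract from the proof of Theorem~\ref{main_theorem} a quantitative decay estimate of the form $\|\nabla f_\kappa(t)\|_{H^{s+3/2}} \lesssim \kappa^{-1/2}$, which is essentially what drives the limit argument of the main theorem itself: the strong surface tension $\kappa \cA$ in the boundary condition (\ref{bry_p_full}), together with the assumption of constant mean curvature of $\partial \Om$, pins the boundary near its initial configuration and forces the Dirichlet datum $h_\kappa(t) := f_\kappa(t)|_{\partial \Om}$ to tend to $0$ in $H^{s+2}(\partial \Om)$. The $C^1$-continuity of $\varphi$ with $\varphi(0) = 0$ then propagates this inward, giving $f_\kappa(t) \to 0$ in $H^{s+5/2}(\Om)$ and hence $\nabla f_\kappa(t) \to 0$ in $H^{s+3/2}(\Om)$. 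From here, solve the decomposition for $\beta_\kappa(t) = (\id + \nabla f_\kappa(t))^{-1} \circ \eta_\kappa(t)$: for $\kappa$ large, $\id + \nabla f_\kappa(t)$ is a diffeomorphism close to the identity in $H^{s+5/2} \hookrightarrow C^2$, so its inverse converges to the identity in a strong Sobolev norm, and continuity of composition in $H^s$ (valid since $s > n/2 + 1$) yields $\beta_\kappa(t) \to \zeta(t)$ in $H^s(\Om)$.

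For the time-derivative statements, differentiate the decomposition in $t$ to obtain
\begin{equation*}
\dot{\eta}_\kappa \;=\; \nabla \dot{f}_\kappa \circ \beta_\kappa \;+\; \bigl( I + D^2 f_\kappa \circ \beta_\kappa \bigr) \dot{\beta}_\kappa.
\end{equation*}
By the previous steps, $I + D^2 f_\kappa \circ \beta_\kappa$ tends to $I$ in a norm strong enough to be invertible with inverse tending to $I$, and $\dot{\eta}_\kappa \to \dot{\zeta}$ by hypothesis, so it suffices to prove $\nabla \dot{f}_\kappa \to 0$ in $H^s(\Om)$. Time-differentiating the Jacobian constraint (\ref{jac_nl}) yields a linear elliptic equation for $\dot{f}_\kappa$ that is a small perturbation of the Laplacian, whose Dirichlet datum $\dot{h}_\kappa$ is determined at leading order by the normal velocity of $\partial \Om_\kappa(t)$. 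Since $\dot{\zeta}|_{\partial \Om} = \vartheta|_{\partial \Om}$ is tangent to $\partial \Om$ and $\dot{\eta}_\kappa \to \dot{\zeta}$, the normal velocity of the free boundary vanishes in the limit; elliptic regularity then produces $\nabla \dot{f}_\kappa \to 0$ in $H^s(\Om)$, and substituting back delivers $\dot{\beta}_\kappa \to \dot{\zeta}$ in $H^s$.

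The principal obstacle is this final time-derivative step: one must set up the linearization of (\ref{jac_nl}) with its time-dependent perturbed boundary condition carefully and verify that the associated solution operator has mapping properties strong enough to lift boundary convergence of $\dot{h}_\kappa$ to interior $H^s$ convergence of $\nabla \dot{f}_\kappa$. Because the trace theorem only delivers $H^{s-1/2}$ convergence of $\dot{\eta}_\kappa|_{\partial \Om}$ from the given $H^s$ convergence of $\dot{\eta}_\kappa$, closing the argument at the stated regularity requires combining the uniform decay estimates from Theorem~\ref{main_theorem} with the $\tfrac{3}{2}$-derivative smoothing built into $\varphi$.
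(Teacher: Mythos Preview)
Your approach for $\nabla f_\kappa \to 0$ and $\beta_\kappa \to \zeta$ is essentially the paper's: cite the quantitative decay estimate from the main theorem's proof and unravel the decomposition. The paper in fact gets $\|\nabla f_\kappa\|_{s+3/2} \leq C/\kappa$ (not merely $\kappa^{-1/2}$), but either suffices.

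The divergence is in how you handle $\nabla \dot{f}_\kappa$. You propose to time-differentiate the Jacobian constraint, set up a perturbed elliptic problem for $\dot{f}_\kappa$, relate its Dirichlet datum to the normal velocity of the moving boundary, and then invoke $\dot{\eta}_\kappa \to \dot{\zeta}$ together with tangency of $\dot{\zeta}$ to force that datum to zero. You correctly flag the regularity mismatch this creates: trace of $\dot{\eta}_\kappa$ only converges in $H^{s-1/2}(\partial\Om)$, which after the $\tfrac{3}{2}$-smoothing of $\varphi$ lands $\nabla\dot f_\kappa$ in $H^{s}$ only if you can first push the boundary convergence up by a half-derivative. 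This is not obviously closable without additional input.

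The paper avoids this detour entirely. The same machinery (theorem~\ref{theo_f_eq}) that produces the bound $\|f_\kappa\|_{s+2,\partial} \leq K_4/\kappa$ simultaneously produces $\|\dot f_\kappa\|_{s+1/2,\partial} \leq K_4/\sqrt{\kappa}$, and via the elliptic extension estimate (\ref{elliptic_estimate_f_bry}) this gives $\|\nabla \dot f_\kappa\|_{s} \leq C/\sqrt{\kappa}$ directly. So $\nabla\dot f_\kappa \to 0$ in $H^s$ is an immediate consequence of estimates already established, and then $\dot\beta_\kappa \to \dot\zeta$ follows from (\ref{dot_eta_f_beta}) exactly as you outlined. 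Your ``principal obstacle'' dissolves once you notice that the quantitative estimates you invoked for $f_\kappa$ come paired with matching estimates for $\dot f_\kappa$.
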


It is interesting to note that while $\eta_\kappa \rar \zeta$ and $\dot{\eta}_\kappa \rar
\dot{\zeta}$, in general the corresponding pressures do
not converge, even if the initial data are $C^\infty$.
To see this, we first point out that since
$\pi$ is defined up to an additive constant, and thus only $\nabla \pi$ is well-defined,
one can only speak of convergence of $\nabla p_\kappa$ to $\nabla \pi$, and not of 
$p_\kappa$ to $\pi$. Consider the two-dimensional 
 case for simplicity, pick any function $f$ which is constant on $\partial \Om$ and let $u_0 = (f_y, -f_x)$. Then $u_0$ will be divergence free and tangent to the boundary. The pressure for 
 (\ref{Euler}) at time zero will then satisfy:
$$ - \Delta \pi = 2(f_{xy}^2 - f_{xx}f_{yy}),$$
and $\nabla_{\nu}\pi$ will equal zero on $\partial \Omega$.  Thus $\pi$ in general will not be constant on $\partial \Omega$, so one cannot expect that
$\nabla p_\kappa,$ the solution of 
(\ref{free_boundary_full})
 will converge to $\nabla \pi$, as $\kappa \rar \infty$, even at time zero.
 As a consequence, convergence of the second time derivatives, i.e., $\ddot{\eta}_\kappa \rar
 \ddot{\zeta}$, generally  fails
 (see the analogous results in \cite{E2, ED}). However, due to the convergence of the first time
 derivatives, the pressures over any positive time interval converge:
 
\begin{corollary}
Under the same assumptions and notation of theorem \ref{main_theorem}, one has
$\int_{t_a}^{t_b} \nabla p_\kappa \circ \eta_\kappa \rar \int_{t_a}^{t_b} \nabla \pi \circ \zeta$ in $H^s(\Om)$,
for any $ 0 \leq t_a < t_b \leq T$.
\end{corollary}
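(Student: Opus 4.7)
The plan is to extract this directly from theorem \ref{main_theorem} by integrating the momentum equations in time. Equation (\ref{basic_fluid_motion_full}) gives $\nabla p_\kappa \circ \eta_\kappa = -\ddot{\eta}_\kappa$, and (\ref{Euler_edo}) gives $\nabla \pi \circ \zeta = -\ddot{\zeta}$. Applying the fundamental theorem of calculus in the time variable to $\dot\eta_\kappa$ and $\dot\zeta$ yields the identities
$$\int_{t_a}^{t_b} \nabla p_\kappa \circ \eta_\kappa \, dt \, = \, \dot{\eta}_\kappa(t_a) - \dot{\eta}_\kappa(t_b), \qquad \int_{t_a}^{t_b} \nabla \pi \circ \zeta \, dt \, = \, \dot{\zeta}(t_a) - \dot{\zeta}(t_b).$$
A brief justification is needed here: although $\ddot{\eta}_\kappa$ and $\nabla p_\kappa \circ \eta_\kappa$ a priori only lie in $L^\infty([0,T_\kappa), H^{s-\frac{3}{2}}(\Om))$, the indefinite integral equals the difference of $\dot{\eta}_\kappa$ values, and $\dot{\eta}_\kappa \in L^\infty([0,T_\kappa), H^s(\Om))$ by theorem \ref{main_theorem}(1). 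Hence the displayed equalities hold in $H^s(\Om)$.

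Subtracting the two relations gives
$$\int_{t_a}^{t_b} \nabla p_\kappa \circ \eta_\kappa \, dt \, - \int_{t_a}^{t_b} \nabla \pi \circ \zeta \, dt \, = \, \bigl(\dot{\eta}_\kappa(t_a) - \dot{\zeta}(t_a)\bigr) - \bigl(\dot{\eta}_\kappa(t_b) - \dot{\zeta}(t_b)\bigr).$$
By part (2) of theorem \ref{main_theorem}, for each fixed $t \in [0,T]$ we have $\dot{\eta}_\kappa(t) \rar \dot{\zeta}(t)$ in $H^s(\Om)$ as $\kappa \rar \infty$. Evaluating at $t=t_a$ and $t=t_b$, both bracketed terms on the right-hand side converge to zero in $H^s(\Om)$, which is exactly the claimed convergence of the time-integrated pressure gradients.

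There is essentially no obstacle to overcome: the content of the corollary is already contained in the main theorem, and the argument is little more than an application of the momentum equation combined with pointwise-in-time convergence of the velocities. The only thing worth underlining is \emph{why} one must integrate in time to get convergence. As explained in the discussion preceding the corollary, the boundary pressures $p_\kappa|_{\partial \Om(t)} = \kappa \cA$ and $\pi|_{\partial\Om}$ do not match in general, so $\ddot{\eta}_\kappa$ fails to converge to $\ddot{\zeta}$ and one cannot hope for pointwise convergence of $\nabla p_\kappa \circ \eta_\kappa$ to $\nabla \pi \circ \zeta$; integration in time smooths out precisely this obstruction, converting the difference into an endpoint difference of first derivatives where the convergence is controlled.
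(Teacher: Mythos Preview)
Your proof is correct and matches the paper's intended argument: the paper does not write out a proof of this corollary but states it immediately after noting that ``due to the convergence of the first time derivatives, the pressures over any positive time interval converge,'' which is exactly the mechanism you spell out. Integrating $\ddot{\eta}_\kappa = -\nabla p_\kappa \circ \eta_\kappa$ and $\ddot{\zeta} = -\nabla \pi \circ \zeta$ over $[t_a,t_b]$ and invoking the $H^s$ convergence $\dot{\eta}_\kappa(t) \rar \dot{\zeta}(t)$ from part (2) of theorem \ref{main_theorem} is precisely what the authors have in mind.
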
 

The convergence
part of theorem \ref{main_theorem}, namely, part (2), was proven  
by the authors in two
spatial dimensions in \cite{DE2d}. However, compared
to \cite{DE2d}, theorem \ref{main_theorem} is self-contained, 
in the sense that the existence of $\eta_\kappa$ is established before
proving the convergence $\eta_\kappa \rar \zeta$, whereas in \cite{DE2d}
we relied on the existence results in Coutand and Shkoller \cite{CS} in order
to obtain the convergence. 

The mathematical study of equations (\ref{free_boundary_full}) has a long 
history, although for a long time
results only under restrictive conditions had been achieved. In particular,
a great deal of work has been devoted to irrotational flows, in which 
case the free boundary Euler equations reduce to the well-known
water-wave equations.
See \cite{ Amborse, AmbroseMasmoudi, Craig, Lannes, Nalimov, Wu, Yosihara}.
More recent results addressing the question
of global existence can be found in \cite{IT, IonPus, WuWaterWavesGlobal} and references therein.

Not surprisingly, when equations (\ref{free_boundary_full}) are considered in full generality, well-posedness becomes a yet more delicate issue, and most of the results are quite recent. In this regard, 
Ebin has showed that the problem is ill-posed if $\kappa =0$ \cite{E0}, although
Lindblad proved well-posedness for $\kappa = 0$ when the so-called ``Taylor sign condition'' 
holds \cite{Lin,LindNor}; 
see also \cite{ChLin} (the linearized problem was also investigated by Lindblad in \cite{Lin2}).
When $\kappa > 0$, a priori estimates have been obtained by
 Shatah and Zeng \cite{ShatahZeng}, with well-posedness being finally 
established by Coutand and Shkoller \cite{CS, CSB} (see also \cite{Sch}). 
See also \cite{ShatahZeng2}.
Coutand and Shkoller also established the convergence of solutions
in the limit $\kappa \rar 0^+$ in the case that the Taylor sign condition holds.
Other recent results, including the study of the compressible free boundary
Euler equations and singularity formation,
 are \cite{Cas1, Cas2, CS2, CS3, CS4, CSH, CSL, IonFef}. We point out that
the analogous free boundary problem for viscous fluids
was first and extensively studied by Solonnikov \cite{MogSol, Sol1, Sol2, Sol3, 
Sol4, 
Sol5, Sol6, Sol7}, with some more recent
advances found in 
\cite{KPW, PS, Se1, Se2, Se3} and references therein.

Lindblad's result \cite{Lin} is based on a Nash-Moser iteration, while
Coutand and Shkoller  \cite{CS}
obtained existence and convergence when $\kappa \rar 0^+$
by developing a technique they call convolution by layers. The results
here presented, based on the decomposition
defined by (\ref{big_phi}),
 provide yet a third, different
method of proof (valid for $\kappa > 0$).
We point out that,
with exception of the authors' work \cite{DE2d} in two-dimensions, 
the  limit $\kappa \rar \infty$
does not seem to have been investigated in the literature before.

\begin{notation} We reserve $\Om$ for the \emph{fixed} domain,
with $\Om(t)$ being always the domain at time $t$, i.e., $\Om(t) = \eta(t)(\Om)$. Of course,
$\Om(0) = \Om$. In several parts of the paper the subscript $\kappa$ 
will be dropped for the sake of 
notational simplicity. 
\end{notation}

\begin{notation}
$H^s(\Om)$ and $H^s(\partial \Om)$ denote, respectively,
the Sobolev spaces of functions on $\Om$ and $\partial \Om$,
with norms $\p \cdot \p_s$ and $\p \cdot \p_{s,\partial}$.
$H^s(\Om, \RR^n)$ etc are similarly understood, although
when the manifolds are clear from the context, we simply
write  $H^s$ or $H^s(M)$ for $H^s(M,N)$.
Notice that $H^0$ denotes the $L^2$
space, with norm $\p \cdot \p_0$.
$H^s_0$ denotes the Sobolev space modulo constants.  
We use both $\nabla$ and $D$ to denote the derivative. $D_w$ is the directional 
derivative in the direction of $w$, $w$ a vector. 
The letter $C$ will be used to denote several different constants that
appear in the estimates. Sometimes we write $C = C(a,b,\dots)$ to indicate
the dependence of $C$ on $a,b,\dots$.
We use the following abridged notation for partial derivatives:
$\frac{\partial}{\partial x^i} \equiv \partial_i$, $\frac{\partial^2}{\partial x^i \partial x^j} \equiv \partial_{ij}$,
$\frac{\partial^3}{\partial x^i \partial x^j x^k} \equiv \partial_{ijk}$, etc.
\end{notation}

\subsection{Organization of the paper} This paper is organized as follows.
In section \ref{scaling} we make some remarks about the role of surface tension.
In section \ref{geometric} we give a geometric interpretation of our theorem
in terms of curves in the group of volume preserving diffeomorphisms and volume
preserving embeddings. In section \ref{auxiliary}, we state several known results
that will be used and fix some notation. In section \ref{space_smoother_emb},
we carry out the construction of the space $\ccE^s_\mu(\Om)$ as outlined in the introduction.
In section \ref{setting}, we derive a new set of equations that splits the dynamics
into an equation for a function $f$ that controls the boundary motion, and
an equation for a diffeomorphism $\be$ that fixes the boundary setwise. $f$ is determined
by its boundary values. Thus in section \ref{section_geom_boundary}, we derive a
further equation for $\left. f \right|_{\partial \Om}$. Section \ref{section_geom_boundary}
consists solely of a series of calculations necessary to analyze $\left. f \right|_{\partial \Om}$ 
and some readers may want to skip it.
Section \ref{section_f_boundary} is the core of the paper, where the existence 
of $\left. f \right|_{\partial \Om}$ is established and estimates for $f$
in terms of $\frac{1}{\kappa}$ are obtained. Section \ref{section_existence}
establishes the existence of solutions to (\ref{free_boundary_full}) via an
iteration scheme. Section \ref{section_convergence} shows the convergence
$\eta_\kappa \rar \zeta$ in the limit $\kappa \rar\infty$.

\begin{remark}
Throughout sections \ref{space_smoother_emb} to \ref{solution_large_kappa}
we work under the hypotheses of part (2) of theorem \ref{main_theorem}, i.e., 
we assume that $\kappa$ is large, $\partial \Om$ has constant mean curvature, 
and $\p Q u_0 \p_s \leq \frac{C}{\sqrt{\kappa}}$. In section \ref{existence_proof}
we show how to prove the existence result under the general assumptions of part (1)
of theorem \ref{main_theorem}.
\label{remark_large_kappa}
\end{remark}

\subsection{Scaling by length\label{scaling}}

When we speak of large surface tension or large $\kappa,$ we should take into account the size of the domain $\Omega.$ It seems clear that surface tension should have more of an effect in a small domain than in a large one.  To clarify this we examine the effect of scaling the length of the domain.

Let $\lambda$ be a positive scale factor and assume $\eta(t)$ is some motion satisfying (\ref{free_boundary_full}).  Then on the scaled domain $\lambda \Omega$ define $\zeta(t)$ by $\zeta(t)(\lambda x) = \lambda \eta(t)(x).$  Then letting $y = \lambda x$ we find that
$$\ddot{\zeta}(t)(y) = \lambda \ddot{\eta}(t)(x).$$ A routine computation shows that $\zeta$ satisfies (\ref{free_boundary_full}) on $\lambda \Omega$ with $p$ replaced by $q$ where $q$ is defined by $q(y) = \lambda^2 p(x).$ However the mean curvature of $\partial \zeta (\lambda \Omega) = \partial \lambda \eta(\Omega)$ is $(1/\lambda) \cA,$ where $\cA$ is the mean curvature of $\partial \eta (\Omega).$
Thus $q = \lambda^2 p = \lambda^2 \kappa \cA = \lambda^3 \kappa (1/\lambda) \cA$ so the scaled motion has an effective coefficient of surface tension of $\lambda^3 \kappa.$ Hence, when we study the effect of surface tension we really should consider $\kappa$ divided by a typical length cubed or $\kappa$ divided by the volume of the domain.  For a given $\kappa,$ the surface tension will have a much greater effect on a drop of liquid than it will on a large body.  
 
\subsection{A geometric interpretation of theorem \ref{main_theorem}\label{geometric}}

Theorem \ref{main_theorem} not only gives a satisfactory answer to 
the natural 
question of the dependence of solutions on the parameter $\kappa$; it also 
addresses a well motivated problem in Applied Science, namely,
when one can, by considering a sufficiently high surface tension,
neglect the motion of the boundary in favor of the simpler description 
in terms of the equations within a fixed domain.

The physical intuition behind theorem \ref{main_theorem} is very simple, as we now explain.
The system (\ref{free_boundary_full}) can be derived from an action principle with Lagrangian
\begin{align}
 \cL(\eta) = K(\eta) - V(\eta) ,
\label{Lagrangian}
\end{align}
where
\begin{align}
 K(\eta, \dot{\eta}) = \frac{1}{2} \int_{\Om} |\dot{\eta}|^2
\label{kinetic}
\end{align}
is the kinetic energy and 
\begin{align}
 V(\eta) =  \kappa |\partial \Om(t) | - \kappa|\partial \Om(0)| = \kappa \Big (  \operatorname{Area}(\partial \Om(t))
 -  \operatorname{Area}(\partial \Om(0)) \Big )
\label{potential}
\end{align}
is the potential energy\footnote{Many authors consider instead $V(\eta) = \kappa |\partial \Om(t) |$.
As the equations of motion remain unchanged by adding a constant, 
we choose to normalize the potential energy to make $V=0$ at time zero.  Such a normalization
is convenient for our purposes as we are interested in taking $\kappa \rar \infty$, in which 
case, if we did not subtract the contribution at time zero, $V(\eta)$ would diverge to infinity.}, and 
with $\eta : [0,T) \rar \cE_\mu^s(\Om)$.
The energy for the fluid motion (\ref{free_boundary_full}) is given by the sum
of the kinetic and potential energies (\ref{kinetic}) and (\ref{potential}), respectively,
\begin{align}
\begin{split}
 E(t) & = K(\eta,\dot{\eta}) + V(\eta) \\
& = \frac{1}{2}\int_\Om |\dot{\eta}|^2 + k \Big ( |\partial \Om(t)| - |\partial \Om| \Big )  ,
\label{energy}
\end{split}
\end{align}
This energy is conserved, and therefore 
\begin{gather}
E(t)  = \frac{1}{2} \p u_0 \p^2_0 
\label{energy_conserved} 
\end{gather}
where we have used $\dot{\eta} = u \circ \eta$ and $\eta(0) = \id$.

Our theorem \ref{main_theorem} is almost an example of a general theorem on motion with a strong constraining force \cite{E2}.
For the general theorem we are given a Riemannian manifold $M$ and a submanifold $N$.  Also given is a function $V: M \rightarrow \RR$
which has $N$ as a strict local minimum in the sense that $\nabla V = 0$ on $N$ and $D^2 V$ is a positive definite bilinear form on the normal bundle of $N$ in $M$. Then if $\eta_\kappa(t)$ is a motion given by the Lagrangian $\cL(\eta, \dot{\eta}) = \frac{1}{2} \langle \dot{\eta},\dot{\eta} \rangle -\kappa V(\eta)$ where $\langle  \, , \, \rangle$ is the Riemannian metric,
 and if $\zeta(t)$ is a Lagrangian motion in $N$ of $\frac{1}{2} \langle \dot{\zeta},\dot{\zeta} \rangle$ with the same initial conditions as $\eta_\kappa(t)$, the theorem says that $\eta_\kappa(t)$ converges to $\zeta(t)$ as $\kappa \rightarrow \infty$.  Also $\dot{\eta}_k \rightarrow \dot{\zeta}$, but the second derivative in general does not converge.  For our theorem, $M=\cE_\mu^{s}(\Om) ,$ $N=\cD_\mu^{s}(\Om) ,$ $\langle \;,\;\rangle$ is the $L^2$ inner product on tangent vectors and $V(\eta)$ is given by (\ref{potential}).

Our theorem \ref{main_theorem} is not actually an example of the general theorem for two reasons:

a) The $L^2$ inner product on tangent spaces is only a 
weak Riemannian metric\footnote{We recall 
 that a weak Riemannian metric is one which induces, on each tangent space, a weaker topology
 than the one given by the local charts. This is a feature exclusive to infinite dimensional 
 manifolds; see \cite{E_manifold,E1,L} for details.}.  The topology that it induces is weaker than the  $H^{s}$ topology of
$\cE_\mu^{s}(\Om).$

b) The bi-linear form $D^2V$ is only weakly positive definite on each normal space; it gives a  topology weaker than the $H^{s}$ topology.

Thus, while theorem \ref{main_theorem} is not a particular case of 
established results about the behavior of the
Euler-Lagrange equations near a submanifold which minimizes the 
potential energy \cite{E2},
it can be viewed to be in the spirit of
those results. Here, as  in \cite{E2, E4, ED},
the manifold minimizing the potential
energy is $\cD^{s}_\mu(\Om)$; see also \cite{Dis_linear}.

\section{Auxiliary results\label{auxiliary}}

Here we recall some well known facts which will be used throughout 
the paper. For their proof, see e.g. \cite{Adams, BB, Hitch, E1, P}.

\begin{proposition} Let $s > \frac{n}{2} + 2$, $g \in \cE^s_\mu(\Om)$,
$f \in H^s(g(\Om))$. Then $f \circ g \in H^s(\Om)$ and 
\begin{gather}
 \p f \circ g \p_s \leq C \p f \p_s\left( 1 + \p g \p_s^s \right),
 \label{Sobolev_composition}
\end{gather}
where $C =C(n,s,\Om)$.
\end{proposition}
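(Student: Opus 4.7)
The plan is to combine three standard ingredients: (i) the change-of-variables formula, which, because $J(g)\equiv 1$, turns integration against the pulled-back measure into integration against the original; (ii) the Fa\`a di Bruno formula for derivatives of a composition; and (iii) Sobolev embedding (legitimate since $s>n/2+2$) together with Moser/Gagliardo--Nirenberg product inequalities to bound each Fa\`a di Bruno term.

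First I would dispose of the base case $k=0$. Since $J(g)\equiv 1$,
\begin{equation*}
\|f\circ g\|_0^2 = \int_\Omega |f(g(x))|^2\, dx = \int_{g(\Omega)} |f(y)|^2\, dy = \|f\|_0^2,
\end{equation*}
so the zeroth-order contribution comes with no $g$-dependent constant. Next, for a multi-index $\alpha$ with $1\le|\alpha|=k\le s$ (I treat the integer case first), Fa\`a di Bruno gives a finite expansion
\begin{equation*}
\partial^\alpha(f\circ g) \;=\; \sum_{j=1}^{k}\sum_{\pi} c_\pi\, \bigl((\partial^{\beta_\pi} f)\circ g\bigr)\prod_{i=1}^{j}\partial^{\gamma_{\pi,i}} g,
\end{equation*}
where $|\beta_\pi|=j$, each $|\gamma_{\pi,i}|\ge 1$, and $\sum_i|\gamma_{\pi,i}|=k$. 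Every term has to be estimated in $L^2(\Omega)$.

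The estimation splits cleanly into two extremes, with interpolation in between. When $j=1$ only a single factor $\partial^\alpha g$ appears: bound it in $L^2$ by $\|g\|_s$ and bound $(Df)\circ g$ in $L^\infty$ by $\|Df\|_\infty\le C\|f\|_s$ (Sobolev embedding, since $s>n/2+1$). At the opposite extreme $j=k$, the polynomial factor is a product of $k$ first derivatives of $g$, bounded in $L^\infty$ by $C\|g\|_s^k$, while $(\partial^k f)\circ g$ is bounded in $L^2$ by $\|f\|_s$ via the volume-preserving change of variables. Intermediate cases follow from Gagliardo--Nirenberg: the $L^{p_i}$-norms needed for a H\"older product estimate on $\prod \partial^{\gamma_{\pi,i}} g$ are each controlled by $\|g\|_s$ to a power $\theta_i\in[0,1]$ with $\sum \theta_i\le j\le k\le s$, and the factor $(\partial^{\beta_\pi}f)\circ g$ in the complementary $L^q$ is again reduced by change of variables to a norm of $\partial^{\beta_\pi}f$ on $g(\Omega)$, hence bounded by $\|f\|_s$. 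Summing over all partitions and over $|\alpha|\le s$ produces a bound of the form $C\|f\|_s(1+\|g\|_s^s)$ with $C=C(n,s,\Omega)$.

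The main technical obstacle is the case of non-integer $s$, where Fa\`a di Bruno does not apply directly. I would handle this by working with the Slobodetskii seminorm
\begin{equation*}
[h]_{\sigma,2}^2 = \iint \frac{|D^{\lfloor s\rfloor}h(x)-D^{\lfloor s\rfloor}h(y)|^2}{|x-y|^{n+2\sigma}}\, dx\, dy, \qquad \sigma=s-\lfloor s\rfloor,
\end{equation*}
applied to $f\circ g$. After expanding $D^{\lfloor s\rfloor}(f\circ g)$ by Fa\`a di Bruno, the double integral is estimated using that $g$ is bi-Lipschitz with constants controlled by $\|g\|_s$ (so $|g(x)-g(y)|\simeq|x-y|$) and that $f\in H^s$ on $g(\Omega)$ gives exactly the fractional control needed. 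Alternatively one can interpolate between consecutive integer cases, which are already handled. In either route the arithmetic must be tracked carefully so that no power of $\|g\|_s$ larger than $s$ is created; this bookkeeping, rather than any conceptual difficulty, is the delicate part.
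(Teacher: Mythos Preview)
The paper does not prove this proposition; it is listed among standard auxiliary facts with a pointer to references (Adams, Bourguignon--Brezis, Ebin, Palais). Your proposal is precisely the classical argument those references contain: the $L^2$ base case via $J(g)\equiv 1$, Fa\`a di Bruno for the higher derivatives, and Gagliardo--Nirenberg/Moser interpolation to distribute the $g$-factors. So there is nothing in the paper to compare against, and your route is the expected one.

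One small point worth tightening: in the $j=1$ term (and some intermediate ones) you invoke Sobolev embedding for $\partial^\beta f$ on the domain $g(\Omega)$, but the embedding constant there is not a priori a function of $\Omega$ alone, whereas the stated constant is $C(n,s,\Omega)$. The standard remedies are either to restrict to $g$ lying in a bounded set of $\cE^s_\mu(\Omega)$ (so that the domains $g(\Omega)$ are uniformly regular; this is how the estimate is actually used in the paper), or to argue that the bi-Lipschitz constants of $g$, controlled by $\|g\|_s$ together with the hypothesis $g^{-1}\in H^s$, bound the extension constant of $g(\Omega)$. Either way a short remark is needed to justify that no hidden dependence on $g(\Omega)$ appears.
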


We shall make use of the following well-known bilinear inequality
\begin{gather}
 \p u \, v \p_r \leq \, C \p u \p_r \p v \p_s,
\label{bilinear}
\end{gather}
for $s > \frac{n}{2}$, $s \geq r \geq 0$, where $C =C(n,s,r,\Om)$. 

For future reference, we remark that (\ref{bilinear}) still
holds true in negative norm Sobolev spaces (in a compact domain without boundary). Indeed, 
if $a \in H^{-r_1}$, $r_1 \geq 0$, $b \in H^{r_2}$, $r_2 > \frac{n}{2}$,
then
\begin{align}
\begin{split}
\p a b \p_{-r_1} & = \sup_{\omega \in H^{r_1}}
 \frac{|(ab,\omega)_0|}{\p \omega \p_{r_1}}
= 
 \sup_{\omega \in H^{r_1}} \frac{|(a,b\omega)_0|}{\p \omega \p_{r_1}}
 \\
& \leq  
\sup_{\omega \in H^{r_1}} \frac{\p a \p_{-r_1}
 \p b\omega \p_{r_1}}{\p \omega \p_{r_1}}
 \leq  C \p a \p_{-r_1} \p b \p_{r_2},
\end{split}
\nonumber
\end{align}
after using (\ref{bilinear}) in the last step to estimate 
$\p b\omega \p_{r_1} \leq C \p \omega \p_{r_1} \p b \p_{r_2}$. 

\begin{notation}
Although the dimension $n=3$ is fixed throughout, we sometimes write $n$ instead of
$3$ in order to make it easier to read off  conditions such as $s > \frac{n}{2}$  that 
are necessary for the application of (\ref{bilinear}) and other dimension dependent results.
\end{notation}

Recall also that restriction to the boundary gives rise to 
a bounded linear map,
\begin{gather}
 \p u \p_{s, \partial} \leq C \p u \p_{s + \frac{1}{2}},~~ s > 0,
\label{restriction}
\end{gather}
with $C =C(n,s,\Om)$. Estimates (\ref{bilinear}) and (\ref{restriction})
will be used throughout the paper, so we shall not explicitly refer to 
them in every instance.

The following $\dive-\curl$ estimate is well-known (see, e.g., 
\cite{TaylorPDE1}): 
let $\Om$ be a domain with an $H^r$ boundary, $r \geq 3$,
$v$ be a vector field on $\Om$ such that $v \in H^0(\Om)$, 
$\curl v \in H^{s-1}(\Om)$, $\dive v \in H^{s-1}(\Om)$, and 
$\langle v, \nu \rangle \in H^{s-\frac{1}{2}}(\partial \Om)$, where $\nu$ is the unit vector normal to $\partial \Om.$ Then,
$v \in H^s(\Om)$, and we have the following estimate
\begin{gather}
\p v  \p_s \leq C( \p v \p_0 + 
\p \curl v \p_{s-1} + \p \dive v \p_{s-1} + \p \langle v, \nu \rangle \p_{s-\frac{1}{2},\partial} ).
\label{div-curl-estimate}
\end{gather}
 
Next we recall the decomposition of a vector field into its gradient and divergence free part. Given 
an $H^s$ vector field $\omega$ on $\Om$, define the operator $Q: H^s(\Om, \RR^n) \rar \nabla H^{s+1}(\Om, \RR^n)$
by $Q(\omega) = \nabla g$, where $g$ solves
\begin{gather}
\begin{cases}
 \Delta g = \dive(\omega) & \text{ in }\Om, \\
\frac{ \partial g}{\partial \nu} = \langle \omega, \nu \rangle \text{ on } \partial \Om.
\end{cases}
\end{gather}
Since solutions to the Neumann problem are unique up to additive constants, $\nabla g$ is uniquely determined by $\om$, so $Q$ is well defined.
Define $P: H^s(\Om,\RR^n) \rar \dive^{-1}(0)_\nu$, where $\dive^{-1}(0)_\nu$ denotes divergence free vector fields 
tangent to $\partial \Om$, by $P = I - Q$, where $I$ is the identity map. 
Then $Q$ and $P$ are orthogonal projections in $L^2$.

We shall make use of the following: 
\begin{notation}
If $\eta:\Om \rar \RR^n$ is a sufficiently regular embedding and 
 $\cO$ is a pseudo-differential operator defined on functions on $\eta(\Om)$, we let
 $\cO_\eta$, which acts on functions defined on $\Om$, be given by
 \begin{gather}
 \cO_\eta(h) = ( \cO ( h \circ \eta^{-1} ) ) \circ \eta.
 \nonumber
 \end{gather}
\label{notation_sub}
\end{notation}
We remark that using notation \ref{notation_sub}, equations (\ref{free_boundary_full})
can be written as
\begin{subnumcases}{\label{free_boundary_full_eta}}
 \ddot{\eta}  = - \nabla p \circ \eta   & in $ \Om$, \label{basic_fluid_motion_full_eta} \\
  \operatorname{div}_\eta (\dot{\eta}) = 0  & in  $\Om$, \label{equation_p_full_eta} \\
 \left. q \right|_{\partial \Om} = \kappa \cB   & on  $\partial \Om$, \label{bry_p_full_eta} \\
 \eta(0) = \id,~\dot{\eta}(0)=u_0,
\end{subnumcases}
where $q = p \circ \eta$ and $\cB = \cA \circ \eta$.
Equations (\ref{free_boundary_full_eta}) reveal yet another advantage of Lagrangian 
coordinates, as all equations are now written in terms of the fixed domain $\Om$.

Finally, theorems related to one-parameter groups of operators and
abstract differential equations will be needed. 
We also state them here for the reader's convenience.

\begin{theorem}
Let $X$ be a Banach space and let $Z$ be a densely defined closed
operator on $X$. Assume that every real $\la$ is in the resolvent  
of $Z$ and that 
\begin{gather}
\p (Z + \la)^{-1} \p \leq \frac{c_1}{|\la|},
\nonumber
\end{gather}
for some constant $c_1>0$. Then, $Z$ generates a $C^0$ semi-group 
of transformations $e^{tZ}:X \rar X$,
such that $\p e^{tZ} \p \leq c_1$. 
If  $Z^\prime$ is a bounded operator with norm $\p Z^\prime \p \leq c_2$,
then $Z + Z^\prime$ also generates a $C^0$ semi-group and
\begin{gather}
\p e^{tZ} \p \leq c_1 e^{c_2|t|},
\nonumber
\end{gather}
\label{theorem_generator_semi_group}
\end{theorem}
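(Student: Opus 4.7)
The plan is to establish generation of $e^{tZ}$ via the classical Yosida approximation and Hille--Yosida argument, and then obtain the perturbed semigroup for $Z + Z'$ by a Dyson/Duhamel series. A preliminary equivalent renorming absorbs the constant $c_1$ in the resolvent estimate and reduces everything to the contractive case.

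For the generation part, I would introduce the bounded Yosida regularizations
\[
Z_n := n Z (Z+n)^{-1} = n - n^2 (Z+n)^{-1}, \qquad n \geq 1,
\]
which are well defined since $n \in \rho(Z)$ by hypothesis. The estimate $\p n(Z+n)^{-1} \p \leq c_1$ together with the identity $Z(Z+n)^{-1} x = x - n(Z+n)^{-1} x$ on $D(Z)$ yields $Z_n x \rar Zx$ as $n \rar \infty$. To bound $e^{tZ_n}$ uniformly in $n$, I would pass to the Kato--Phillips equivalent norm
\[
\p x \p_* := \sup_{n \geq 1,\, k \geq 0} \p n^k (Z+n)^{-k} x \p,
\]
which satisfies $\p \cdot \p \leq \p \cdot \p_* \leq c_1 \p \cdot \p$ and $\p (Z+\la)^{-1} \p_* \leq 1/|\la|$ for every nonzero real $\la$. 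In this renormed space $\p e^{tZ_n} \p_* \leq 1$; using the commutativity of $Z_n$ with $Z_m$, the bound $\p e^{tZ_n} \p_* \leq 1$, and the strong convergence $Z_n x \rar Zx$, one shows $\{e^{tZ_n} x\}_n$ is Cauchy locally uniformly in $t$ for $x \in D(Z)$, and its strong limit defines a $C^0$ semigroup $T(t)$ with $\p T(t) \p_* \leq 1$, equivalently $\p T(t) \p \leq c_1$. A standard argument (differentiating $T(t)x$ at $t=0$ on $D(Z)$ combined with closedness of $Z$) identifies $T$ as the semigroup generated by $Z$.

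For the perturbation, I would define $S(t)$ as the unique solution of the Duhamel equation
\[
S(t) x = T(t) x + \int_0^t T(t-s)\, Z'\, S(s) x \, ds,
\]
constructed by Picard iteration in the renormed space. Since $\p T(t) \p_* \leq 1$ and $\p Z' \p_* \leq c_2$ after bookkeeping through the equivalence constants, the $k$th iterate differs from its predecessor by at most $(c_2 |t|)^k / k!$ in the norm induced by $\p \cdot \p_*$, giving $\p S(t) \p_* \leq e^{c_2 |t|}$, hence $\p S(t) \p \leq c_1 e^{c_2 |t|}$ in the original norm. The semigroup property $S(t+s) = S(t) S(s)$ follows from uniqueness for the Duhamel equation; differentiating $S(t)x$ at $t=0$ on $D(Z)$, together with closedness of $Z + Z'$ (closed plus bounded), identifies the generator as $Z + Z'$.

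The hard step is the passage from the single-resolvent estimate $\p (Z+\la)^{-1} \p \leq c_1/|\la|$ to the iterated bound $\p (Z+\la)^{-k} \p_* \leq 1/|\la|^k$ needed to control $e^{tZ_n}$ uniformly in $n$. Naive iteration produces only $\p (Z+\la)^{-k} \p \leq c_1^k/|\la|^k$, which gives $\p e^{tZ_n} \p \leq e^{tn(c_1 - 1)}$ and diverges as $n \rar \infty$ once $c_1 > 1$. The Kato--Phillips renorming packages all the iterated resolvent bounds into a single equivalent norm in which $Z$ is dissipative; after this one-time cost, the classical contraction Hille--Yosida proof and the Dyson series for the perturbation go through verbatim, with $c_1$ appearing only as the multiplicative constant in the final bounds.
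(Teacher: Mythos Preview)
The paper does not prove this statement itself; it simply refers to Chapter~12 of Hille--Phillips. Your overall strategy---Yosida approximants for generation, a Dyson/Duhamel series for the bounded perturbation---is exactly the classical route found in that reference, so in outline you are aligned with what the paper cites.

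There is, however, a genuine gap in your renorming step. You define $\p x\p_* = \sup_{n\ge 1,\,k\ge 0}\p n^k(Z+n)^{-k}x\p$ and assert $\p x\p_*\le c_1\p x\p$, but that inequality is \emph{equivalent} to the iterated bound $\p n^k(Z+n)^{-k}\p\le c_1$ for all $n,k$---precisely what you flagged as the hard step. From the single-power hypothesis one gets only $\p n^k(Z+n)^{-k}\p\le c_1^{\,k}$, so for $c_1>1$ your $\p\cdot\p_*$ need not even be finite. The Feller--Pazy renorming runs in the other direction: it takes the iterated Hille--Yosida estimate as \emph{input} and produces a norm in which the resolvent is contractive; it does not manufacture the iterated bound from the first-power one. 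The result in Hille--Phillips is stated with the full iterated condition $\p(Z+\la)^{-k}\p\le c_1/|\la|^k$, and the paper's phrasing should be read as shorthand for this (in the paper's own application the operator is a small perturbation of a skew-symmetric one, where $c_1=1$ and the iterated bounds are automatic). With that hypothesis in hand your argument goes through, modulo one bookkeeping slip: after renorming one only has $\p Z'\p_*\le c_1 c_2$, not $c_2$, so the Dyson series delivers $c_1 e^{c_1 c_2|t|}$ rather than the stated $c_1 e^{c_2|t|}$.
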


\begin{theorem}
Let $X$ and $Y$ be Hilbert spaces such that $Y$ is densely and continuously
embedded in $X$. Let $\{Z(t) \, | \, 0 \leq t \leq T \}$ be a family 
of operators on $X$, each of which generates a $C^0$-semi-group and
assume that:

(i) There exist  constants $\alpha$ and $\beta$ such that, for each $t$ and for all 
positive $\tau$,
\begin{gather}
\p e^{\tau Z(t)} \p \leq \alpha e^{\beta \tau},
\nonumber
\end{gather}

(ii) For each $t$, $e^{\tau Z(t)}$ restricted to $Y$ is a $C^0$ semi-group on $Y$. 
There exist constants $\ga$ and $\de$ such that, for each $t$, there exists an inner
product on $Y$, whose norm $\prescript{}{t}{\p \cdot \p}$ 
gives the topology of $Y$, and
such that 
\begin{gather}
\p e^{\tau Z(t)} \p_{\operatorname{Op}(t)} \leq  \de e^{\ga \tau},
\nonumber
\end{gather}
where $\operatorname{Op}(t)$ is the operator norm on $B(Y)$ induced by
$\prescript{}{t}{\p \cdot \p}$. Furthermore, there exist constants
$\mu$ and $\nu$ such that, for any $t_1,t_2 \in [0,T]$ and any $y \in Y$,
\begin{gather}
\prescript{}{t_2}{\p y \p} \leq \mu e^{\nu |t_2-t_1|} 
\prescript{}{t_1}{\p y \p}.
\nonumber
\end{gather}

(iii) $Y$ is included in the domains of each $Z(t)$, and $Z(t)$ is 
continuous as a map from $[0,T]$ to $B(Y,X)$.

(iv) $Z(t)$ is reversible in the sense that $\widehat{Z}(t) = 
-Z(T-t)$ also satisfies (i), (ii), and (iii).

Then, there exists a unique family of operators $U(t,\tau) \in B(X)$, defined
for $t, \tau \in [0,T]$, such that 

(a) $U(t,\tau)$ is strongly continuous as a function of $\tau$ and $t$,
$U(\tau,\tau) = I$ (the identity operator), and
\begin{gather}
\p U(t,\tau) \p \leq c_1 e^{c_2|t - \tau|},
\nonumber
\end{gather}
for some constants $c_1,c_2$ depending only on $\alpha, \beta, \gamma,
\de, \mu,$ and $\nu$.

(b) $U(t,\tau) = U(t,\tau^\prime) U(\tau^\prime, \tau)$.

(c) For all $y \in Y$,
\begin{gather}
\frac{\partial}{\partial t} \left( U(t,\tau) y \right) 
= Z(t) U(t,\tau) y,
\nonumber
\end{gather}
where $\frac{\partial}{\partial t}$ means right derivative at 
$t=0$ and left derivative at $t=T$.

(d) For all $y \in Y$,
\begin{gather}
\frac{\partial}{\partial \tau} \left( U(t,\tau) y \right) 
= -  U(t,\tau) Z(\tau) y,
\nonumber
\end{gather}
where $\frac{\partial}{\partial \tau }$ means right derivative at 
$\tau=0$ and left derivative at $\tau=T$.

(e) $U(t,\tau)Y \subseteq Y$ and for any $t,\tau \in [0,T]$,
\begin{gather}
\p U(t,\tau) \p_{\operatorname{Op}(t)} \leq 
c_3 e^{c_4 T} e^{c_5 |t - \tau|},
\nonumber
\end{gather}
for some constants $c_3, c_4, c_5$ depending only on $\alpha, \beta, \gamma,
\de, \mu,$ and $\nu$.

(f) $U(t,\tau)$ is strongly continuous into $Y$, as a function of $t$ and
$\tau$, and therefore 
\begin{gather}
\frac{\partial}{\partial t}( U(t,\tau) y ) = Z(t) U(t,\tau) y
\nonumber
\end{gather}
is continuous in $X$ as a function of $t$ and $\tau$.
\label{theorem_evolution_operator}
\end{theorem}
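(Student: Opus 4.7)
The plan is to construct $U(t,\tau)$ by time-discretization, in the spirit of Kato's theory of evolution operators for hyperbolic equations. First I would partition $[0,T]$ into $n$ equal subintervals $0 = t_0^{(n)} < \cdots < t_n^{(n)} = T$ and set $Z_n(t) = Z(t_k^{(n)})$ for $t \in [t_k^{(n)}, t_{k+1}^{(n)})$. Each $Z_n$ has an evolution system $U_n(t,\tau)$ obtained by concatenating the semigroups $e^{s Z(t_k^{(n)})}$ provided by (i). Hypothesis (i) yields an $n$-independent $X$-bound $\p U_n(t,\tau) \p \leq c_1 e^{c_2(t-\tau)}$: the multiplicative factors $\alpha$ accumulating at each mesh crossing are absorbed after replacing the norm of $X$ by an equivalent one in which the semigroup constant is $1$, a standard renorming available from (i).

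Next, the crux of the argument is a uniform $Y$-bound on $U_n$. Hypothesis (ii) is designed precisely for this purpose: on each mesh cell I measure in the inner product $\prescript{}{t_k^{(n)}}{\p \cdot \p}$, so that $\prescript{}{t_k^{(n)}}{\p e^{sZ(t_k^{(n)})} y \p} \leq \de e^{\ga s} \prescript{}{t_k^{(n)}}{\p y \p}$, and I pass from the norm at $t_k^{(n)}$ to that at $t_{k+1}^{(n)}$ via $\prescript{}{t_{k+1}^{(n)}}{\p \cdot \p} \leq \mu e^{\nu \Delta t} \prescript{}{t_k^{(n)}}{\p \cdot \p}$. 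Telescoping across the $n$ cells and renorming the family $\{\prescript{}{t}{\p \cdot \p}\}_{t}$ to absorb the leading multiplicative constants $\de, \mu$ into exponential factors (so that, after renorming, the passage factor between consecutive cells becomes $e^{\nu' \Delta t}$ with no residual multiplicative constant) yields an $n$-independent $Y$-bound
\begin{gather}
\p U_n(t,\tau) \p_{\operatorname{Op}(t)} \leq c_3 e^{c_4 T} e^{c_5 (t-\tau)},
\nonumber
\end{gather}
which is precisely the bound that will become conclusion (e).

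Third, I would show that $\{U_n(\cdot,\tau) y\}$ is Cauchy in $X$ for every $y \in Y$ via the standard telescoping identity
\begin{gather}
U_n(t,\tau) y - U_m(t,\tau) y = \int_\tau^t U_n(t,s) \big(Z_n(s) - Z_m(s)\big) U_m(s,\tau) y \, ds,
\nonumber
\end{gather}
which is well-defined since $U_m(s,\tau) y \in Y \subseteq \operatorname{Dom}(Z(s))$ by (iii). The uniform $X$-bound on $U_n$, the uniform $Y$-bound on $U_m y$ from the previous step, and the uniform continuity of $Z : [0,T] \to B(Y,X)$ from (iii) (so that $Z_n \to Z$ in this norm) drive the integrand to zero uniformly as $n,m \to \infty$. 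Define $U(t,\tau) y$ as the limit on $Y$ and extend to all of $X$ by density using the uniform $X$-bound. Properties (a), (b), (e) pass to the limit directly. For (c) and (d), I would differentiate the Duhamel identity $U(t,\tau) y = y + \int_\tau^t Z(s) U(s,\tau) y \, ds$ obtained by limits; the strong continuity into $Y$ asserted in (f) then follows from an approximation argument using the uniform $Y$-boundedness. Uniqueness comes from (d): for any second family $V$, the map $s \mapsto V(t,s) U(s,\tau) y$ has derivative zero for $y \in Y$, so $V \equiv U$ on $Y$ and hence everywhere by density. The reversibility hypothesis (iv) is used to run the construction backwards in time, yielding $U(t,\tau)$ for $\tau > t$ as well.

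The main obstacle I anticipate is the $n$-independent $Y$-bound in the second step: the time-dependent inner products and comparison estimate in (ii) must be combined very carefully so that the discrete multiplicative factors $\mu^n$ do not blow up as $n \to \infty$. This is the combinatorial heart of the argument, and it is precisely why (ii) is formulated in the specific exponential form stated — a weaker hypothesis on the family $\{\prescript{}{t}{\p \cdot \p}\}$ would fail to give the uniform control needed for compactness.
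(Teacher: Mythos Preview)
The paper does not give its own proof of this theorem: it is stated as an auxiliary result in Section~\ref{auxiliary} and the proof is simply referred to Kato~\cite{K_1} (see the sentence ``Theorems \ref{theorem_evolution_operator} and \ref{theorem_abstract_ODE} are proven in \cite{K_1}''). Your sketch is precisely the time-discretization/step-function construction that Kato carries out in that reference, so there is nothing to compare---you have reproduced the outline of the cited proof.
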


\begin{theorem}
Let $X$, $Y$ and $Z(t)$ be as in theorem \ref{theorem_evolution_operator}.
Let $e(t)$ be a continuous curve in $Y$ and define $y(t)$ to be
\begin{gather}
y(t) = U(t,0) y_0 + \int_0^t U(t,\tau) e(\tau) \, d\tau,
\nonumber
\end{gather}
for $0 \leq t \leq T$, $y_0 \in Y$. $U$ is the evolution operator given
 by theorem \ref{theorem_evolution_operator}.
 Then $y(t)$ is a continuous curve in $Y$ which 
is $C^1$ in $X$, and it is the unique solution to the equation
\begin{gather}
\dot{y}(t) = Z(t)y(t) + e(t)
\nonumber
\end{gather}
such that $y(0) = y_0$.
\label{theorem_abstract_ODE}
\end{theorem}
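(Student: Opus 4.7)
The plan is to verify the four assertions in turn: $Y$-continuity of $y$, $C^1$-regularity in $X$, satisfaction of the ODE, and uniqueness. Since the evolution operator $U(t,\tau)$ built in Theorem \ref{theorem_evolution_operator} is already known to map $Y$ into $Y$, to be strongly continuous into $Y$, and to differentiate (in $X$) according to parts (c) and (d), almost everything reduces to manipulating the Duhamel formula with these properties.

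First I would establish $Y$-continuity of $y(t)$. The term $U(t,0)y_0$ is continuous as a curve in $Y$ by part (f) of Theorem \ref{theorem_evolution_operator} applied with $\tau = 0$. For the integral term, I would use the estimate (e), namely $\p U(t,\tau)\p_{\operatorname{Op}(t)} \leq c_3 e^{c_4 T} e^{c_5 |t-\tau|}$, together with the $Y$-continuity of $e(\tau)$ and part (f), to see that $\tau \mapsto U(t,\tau)e(\tau)$ is a continuous curve in $Y$ whose $Y$-norm is bounded uniformly on $[0,T]$; hence the Bochner integral exists in $Y$. Continuity of the whole integral in $t$ then follows from a standard split-and-estimate argument, breaking the difference $\int_0^{t+h} - \int_0^t$ into $\int_t^{t+h}$ (small by boundedness of the integrand) plus $\int_0^t \bigl(U(t+h,\tau)-U(t,\tau)\bigr)e(\tau)\,d\tau$ (small by the dominated convergence argument justified by (e) and (f)).

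Next I would prove $y$ is $C^1$ in $X$ and satisfies the ODE. By part (c), $\frac{d}{dt}U(t,0)y_0 = Z(t)U(t,0)y_0$ in $X$. For the integral term I would differentiate under the integral sign: for fixed $\tau$, $\frac{\partial}{\partial t}U(t,\tau)e(\tau) = Z(t)U(t,\tau)e(\tau)$ in $X$; the boundary term from the upper limit is $U(t,t)e(t) = e(t)$. To justify interchanging $\frac{d}{dt}$ with the integral I would write the difference quotient, use part (b) to factor $U(t+h,\tau) = U(t+h,t)U(t,\tau)$, then subtract and add $U(t,\tau)e(\tau)$; the dominated convergence is supplied by the $X$-continuity from part (f) plus the uniform $Y$-bound on $U(t,\tau)e(\tau)$ together with the continuity of $Z(t)$ as a map $[0,T] \to B(Y,X)$ (hypothesis (iii) of Theorem \ref{theorem_evolution_operator}). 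This gives
\begin{gather*}
\dot{y}(t) = Z(t)U(t,0)y_0 + e(t) + Z(t)\!\int_0^t U(t,\tau)e(\tau)\,d\tau = Z(t)y(t) + e(t),
\end{gather*}
with both sides continuous in $X$, so $y \in C^1([0,T],X)$; the initial condition $y(0)=y_0$ is immediate from $U(0,0)=I$.

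Finally, for uniqueness I would use the standard ``running evolution operator'' trick. If $y_1$ and $y_2$ are two solutions then $w = y_1 - y_2$ is a $Y$-continuous, $X$-$C^1$ curve with $\dot{w} = Z(t)w$ and $w(0)=0$. Fix $t \in [0,T]$ and define $h(\tau) = U(t,\tau)w(\tau)$ for $\tau \in [0,t]$; by parts (c), (d) and the product rule in $X$,
\begin{gather*}
\frac{dh}{d\tau}(\tau) = -U(t,\tau)Z(\tau)w(\tau) + U(t,\tau)\dot{w}(\tau) = 0,
\end{gather*}
so $h$ is constant on $[0,t]$; comparing the values at $\tau=0$ and $\tau=t$ gives $w(t) = U(t,0)w(0) = 0$. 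The main subtlety I anticipate is the justification of the product rule for $h(\tau)$ in $X$ and of differentiating under the integral sign, both of which require that the $Y$-valued integrand be continuously mapped into $X$ by $Z(\tau)$ uniformly in $\tau$; this is precisely what hypothesis (iii) of Theorem \ref{theorem_evolution_operator} and the uniform $Y$-bound from estimate (e) are tailored to deliver, so no new input is needed beyond carefully organizing these facts.
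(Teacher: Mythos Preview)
Your argument is correct and is essentially the standard Kato proof. Note that the paper does not supply its own proof of this theorem: it simply cites \cite{K_1} (and \cite{K}) for Theorems \ref{theorem_evolution_operator} and \ref{theorem_abstract_ODE}, so there is no in-paper argument to compare against; your sketch is exactly the verification one finds in those references, organized around properties (b)--(f) of $U(t,\tau)$ together with hypothesis (iii).
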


The proof of theorem \ref{theorem_generator_semi_group} can be found,
for instance, in chapter 12 of \cite{Hille}. 
Theorems \ref{theorem_evolution_operator}
and \ref{theorem_abstract_ODE} are proven in \cite{K_1} (see also \cite{K}). We remark that the results
of \cite{K_1} are much more general than the above. Here, we stated
them in a form suitable for our purposes. See also \cite{E2}.

\section{The space $\ccE_\mu^s(\Om)$\label{space_smoother_emb}}
Here we construct the space $\ccE_\mu^s(\Om)$, as outlined in 
the introduction. We assume that $s > \frac{n}{2} + 2$.

To start we note that the equation
\begin{align}
 J(\id + \nabla f) = 1
 \nonumber
\end{align}
can be written as
\begin{align}
\Delta f + \cN(f) = 0,
\label{nl_Dirichlet_f}
\end{align}
where
\begin{gather}
\cN(f) = f_{xx}f_{yy} + f_{xx}f_{zz} + f_{yy}f_{zz} - f_{xy}^2 - f_{xz}^2 
- f_{yz}^2 + \det (D^2 f).
\label{non_linear_terms_f_extension}
\end{gather}
Equation (\ref{nl_Dirichlet_f}) can be considered as a non-linear Dirichlet problem for $f$, and so for $f$ small, $f$ should be determined by its boundary values. 
We shall present the argument  for three dimensions, which is the
main case of interest in this work. The interested reader can generalize the
 construction 
of $\ccE_\mu^s(\Om)$ to higher dimensions.

Given $h \in H^{s+2}(\partial \Om)$, we 
are interested in solving 
\begin{subnumcases}{\label{nl_Dirichlet_f_3d_system}}
\Delta f + \cN(f)  = 0 & in $\Om$,
 \label{nl_Dirichlet_f_3d} \\
f = h & on $\partial \Om$. \label{nl_Dirichlet_bry}
\end{subnumcases}
Define a map 
\begin{align}
& F: H^{s+2}(\partial \Om) \times H^{s+\frac{5}{2}}(\Om) 
\rar H^{s+2}(\partial \Om) \times H^{s + \frac{1}{2}}(\Om) ,
\nonumber \\
{\rm by} \:\:\:\:\:\: & F(h, f) = ( f\left|_{\partial \Om}\right. - h, \Delta f +\cN(f) ).
\nonumber
\end{align}

Notice that $F$ is $C^1$ in the neighborhood of the origin and $F(0,0) = 0$, where we denote by $0$ 
the origin in the product Hilbert space $H^{s+2}(\partial \Om) \times H^{s+\frac{5}{2}}(\Om) $. Letting $w \in  H^{s+2}(\Om)$, we obtain
\begin{gather}
D_2 F(0,0)(w) = (  w\left|_{\partial \Om}\right., \Delta w ),
\label{linearization}
\end{gather}
where $D_2$ is the partial derivative of $F$ with respect to 
its second argument. From the uniqueness of solutions to the 
 Dirichlet problem 
it follows that $D_2 F(0,0)$ is an isomorphism, and therefore 
by the implicit function theorem there exists a neighborhood of zero
in $H^{s+2}(\partial \Om)$, (which we can take without 
loss of generality to be a ball
 $\cB_{\de_0}^{s+2}(\partial \Om)$) and 
a $C^1$
map $\varphi: \cB_{\de_0}^{s+2}(\partial \Om) \rar H^{s+\frac{5}{2}}(\Om)$
satisfying $\varphi(0) = 0$, and 
such 
that $F(h, \varphi(h)) = 0$ for all $h \in \cB_{\de_0}^{s+2}(\Om)$.
In other words, $f = \varphi(h)$ solves (\ref{nl_Dirichlet_f_3d_system}).

Furthermore, $D \varphi = -(D_2 F)^{-1} D_1 F$.
Thus $D \varphi$ is injective at the origin 
(in fact, it is not difficult to see that the derivative
$D \varphi(0)$ is the harmonic extension map), 
and  so $\varphi$ is injective near zero.
From this and 
the above it then follows that $\varphi( \cB^{s+2}_{\de_0}(\partial \Om))$
is a submanifold of $H^{s+\frac{5}{2}}(\Om)$.

Recall now the definition (\ref{big_phi}). Notice that
$\Phi$ is well defined (if $\de_0$ is small) and its image
belongs to $\cE_\mu^s(\Om)$ since $J(\beta) = 1$ and, by 
construction,  $J(\id + \nabla f) = 1$.

We have therefore proven:

\begin{proposition}
Let $s > \frac{n}{2} + 2$ and let $B^{s+2}_{\de_0}(\partial \Om)$ be 
the open ball of radius $\de_0$ in $H^{s+2}(\partial \Om)$. Then,
if $\de_0$ is sufficiently small, there exists an
embedding $\varphi: B^{s+2}_{\de_0}(\partial \Om) \rar H^{s+\frac{5}{2}}(\Om)$, 
given explicitly by $\varphi(h) = f$, where $f$ solves (\ref{nl_Dirichlet_f_3d_system}). Moreover, the map $\Phi$ given by (\ref{big_phi}) is well defined.
\label{embedding}
\end{proposition}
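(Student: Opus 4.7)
The plan is to follow the implicit function theorem strategy already suggested: rewrite $J(\id + \nabla f) = 1$ as the semilinear PDE $\Delta f + \mathcal{N}(f) = 0$ with $\mathcal{N}$ given by (\ref{non_linear_terms_f_extension}), and view the Dirichlet problem (\ref{nl_Dirichlet_f_3d_system}) as the zero set of the map $F(h,f) = (f|_{\partial\Om} - h, \, \Delta f + \mathcal{N}(f))$ between the stated Hilbert spaces.

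First I would verify that $F$ is a well-defined $C^1$ map in a neighborhood of the origin. The boundary component is linear, so only the second component requires work. Since $f \in H^{s+5/2}(\Om)$ puts the second derivatives in $H^{s+1/2}(\Om)$, and since $s+1/2 > n/2$ under the hypothesis $s > n/2 + 2$, the bilinear estimate (\ref{bilinear}) shows that each quadratic term $f_{ij}f_{kl}$ and the cubic term $\det(D^2f)$ lies in $H^{s+1/2}(\Om)$ with bounds polynomial in $\|f\|_{s+5/2}$; this also gives Fréchet differentiability with $C^0$ derivative. Next, compute the partial derivative $D_2F(0,0)(w) = (w|_{\partial\Om}, \Delta w)$ as in (\ref{linearization}). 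Classical elliptic theory for the Dirichlet problem provides, for any $(h, g) \in H^{s+2}(\partial\Om) \times H^{s+1/2}(\Om)$, a unique $w \in H^{s+5/2}(\Om)$ solving $\Delta w = g$, $w|_{\partial\Om} = h$, with continuous dependence on the data, so $D_2F(0,0)$ is a bounded linear isomorphism.

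The implicit function theorem then yields $\delta_0 > 0$ and a $C^1$ map $\varphi : \mathcal{B}^{s+2}_{\delta_0}(\partial\Om) \to H^{s+5/2}(\Om)$ with $\varphi(0)=0$ and $F(h,\varphi(h))=0$, i.e.\ $f=\varphi(h)$ solves (\ref{nl_Dirichlet_f_3d_system}). To upgrade this to an embedding, differentiate the identity $F(h,\varphi(h)) \equiv 0$ to obtain $D\varphi = -(D_2F)^{-1} D_1F$. At $h=0$ this gives $D\varphi(0) h = w$, where $w$ solves $\Delta w = 0$, $w|_{\partial\Om} = h$; that is, $D\varphi(0)$ is the harmonic extension operator, which is injective (indeed a bounded linear injection with closed range). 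Shrinking $\delta_0$ if necessary, $\varphi$ is itself an injective immersion onto a $C^1$ submanifold of $H^{s+5/2}(\Om)$.

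Finally, to see that $\Phi$ in (\ref{big_phi}) is well defined on $\mathcal{D}^s_\mu(\Om) \times \varphi(\mathcal{B}^{s+2}_{\delta_0}(\partial\Om))$ and lands in $\mathcal{E}^s_\mu(\Om)$, note that for $\beta \in \mathcal{D}^s_\mu(\Om)$ we have $J(\beta) \equiv 1$ and, by construction of $\varphi$, $J(\id + \nabla f) \equiv 1$; then $J((\id + \nabla f)\circ \beta) = (J(\id+\nabla f)\circ\beta) J(\beta) = 1$. For small $\delta_0$, the $H^{s+5/2}$ bound on $f$ controls $\|\nabla f\|_\infty$ via Sobolev embedding, so $\id + \nabla f$ is a local (hence global, by a degree argument combined with fixing the boundary modulo small perturbations) $H^{s+3/2}$-diffeomorphism of $\Om$ onto its image; composition with $\beta$ remains in $H^s$ thanks to the composition estimate (\ref{Sobolev_composition}), yielding an element of $\mathcal{E}^s_\mu(\Om)$. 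The main technical point I expect to require the most care is precisely the verification that $\mathcal{N}$ maps $H^{s+5/2}(\Om)$ into $H^{s+1/2}(\Om)$ as a $C^1$ map, since the determinant term is cubic in the top-order derivatives of $f$ and one must check that no regularity is lost in any factor of the product.
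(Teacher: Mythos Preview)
Your proposal is correct and follows essentially the same route as the paper: define $F(h,f) = (f|_{\partial\Om} - h,\, \Delta f + \mathcal{N}(f))$, compute $D_2F(0,0)(w) = (w|_{\partial\Om}, \Delta w)$, invoke the Dirichlet isomorphism and the implicit function theorem to produce $\varphi$, identify $D\varphi(0)$ as the harmonic extension operator to get injectivity, and check $\Phi$ lands in $\cE^s_\mu(\Om)$ via multiplicativity of the Jacobian. You supply more detail than the paper on why $\mathcal{N}$ is $C^1$ into $H^{s+1/2}(\Om)$ and why $\id + \nabla f$ is an embedding, but the argument is otherwise the same.
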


\begin{definition}
Under the hypotheses of proposition \ref{embedding}, we define
$\mathscr{E}_\mu^s(\Om) \subseteq \cE_\mu^s(\Om)$ by
\begin{gather}
\mathscr{E}_\mu^s(\Om)
 = \Phi\big( \cD_\mu^s (\Om) \times \varphi(\cB_{\de_0}^{s+2}(\partial \Om))  \big).
\nonumber
\end{gather}
\end{definition}

\section{A new system of equations \label{setting}}

In this section, we shall derive a different set of equations for 
the free boundary problem (\ref{free_boundary_full}). 
In what follows, we shall make use of the well known decomposition of  a vector
field into its gradient and divergence free parts, as presented 
in section \ref{auxiliary}.
Hence, recall that $Q: H^s(\Om, \RR^n) \rar \nabla H^{s+1}(\Om, \RR^n)$ and 
 $P: H^s(\Om,\RR^n) \rar \dive^{-1}(0)_\nu$
 (where $\dive^{-1}(0)_\nu$ denotes divergence free vector fields 
tangent to $\partial \Om$) are the operators realizing this decomposition. They
satisfy $P + Q = I$, where $I$ is the identity map, and 
since  $\nabla H^{s+1}(\Om, \RR^n)$ and $\dive^{-1}(0)_\nu$
are $L^2$-orthogonal, it follows that $Q$ and $P$ are orthogonal projections in $L^2$.

To derive the new system, assume 
that 
solutions $\eta$ to (\ref{free_boundary_full}) can be written as
\begin{align}
\eta = (\id + \nabla f) \circ \beta,
\label{def_f_beta} 
\end{align}
with $\beta \in \cD_\mu^s(\Om)$, $\nabla f \in H^s(\Om)$
and with $f$ satisfying (\ref{nl_Dirichlet_f_3d}).
In this case we also observe that 
\begin{align}
 \beta(0) = \id,~~\nabla f(0) = 0.
 \nonumber
\end{align}

It is customary to write the pressure as a sum of an interior and a boundary term, namely,
\begin{gather}
p = p_0 + \kappa \cA_H,
\label{split_pressure}
\end{gather}
so that the system (\ref{free_boundary_full}) takes the form
 \begin{subnumcases}{\label{free_boundary} }
  \ddot{\eta}  = - \nabla p \circ \eta = - (\nabla p_0 + \kappa \nabla \cA_H)\circ \eta & in $\Om$,
\label{basic_fluid_motion} \\
\dive(\dot{\eta}\circ \eta^{-1} ) = 0 &  in $\eta(\Om)$, \\
  \Delta p_0 = - \dive (\nabla_u u) & in $\eta(\Om)$ , 
  \label{equation_p_0} \\
\left. p_0 \right|_{\partial \eta(\Om)} = 0 & on $ \partial \eta(\Om)$,
\label{bry_p_0} \\
  \Delta \cA_H = 0 &  in  $\eta(\Om)$,
  \label{harmonic_ext_def} \\
\left. \cA_H \right|_{\partial \eta(\Om)} =  \cA &   on $\partial \eta(\Om)$,
 \label{equation_harm_ext} \\
 \eta(0) = \id,~\dot{\eta}(0)=u_0.
\end{subnumcases}

Differentiating (\ref{def_f_beta}) in time gives
\begin{align}
 \dot{\eta} = (\nabla \dot{f} + v \cdot D \,\nabla f + v) \circ \beta,
\label{dot_eta_f_beta}
\end{align}
where $v$ is defined by 
\begin{align}
 \dot{\beta} = v \circ \beta.
\label{definition_v}
\end{align}
Using $\eta(0) = \id$ and $\dot{\eta}(0) = u_0$, 
from (\ref{dot_eta_f_beta}) we obtain
\begin{align}
 u_0 = \nabla \dot{f}(0) + v_0.
\nonumber
\end{align}
We  therefore obtain
\begin{gather}
 v_0 = P u_0,
\nonumber
\end{gather}
and
\begin{gather}
\nabla \dot{f}(0) = Q u_0.
\nonumber
\end{gather}
Differentiating (\ref{dot_eta_f_beta}) again and using (\ref{basic_fluid_motion}) gives the following equation:
\begin{align}
 \nabla \ddot{f} + 2 D_v \nabla \dot{f} + D^2_{vv} \nabla f + (\dot{v} + v \cdot \nabla v) D \, \nabla f
+ \dot{v} + v \cdot \nabla v = -\nabla p\circ (\id + \nabla f),
\label{equation_ddot_nabla_f}
\end{align}
where the operator $D^2_{vv}$, acting on a vector $w$, is given in coordinates by 
\begin{gather}
( D^2_{vv} w )^i = v^j v^l \partial_{j}\partial_{l} w^i,
\label{D_vv_two}
\end{gather}
or in invariant form by
\begin{gather}
D^2_{v v} w = D_{v} \nabla_{v} w - D_{\nabla_{v} v} w.
\nonumber
\end{gather}
Define $L$ on the space of maps from $\Om$ to $\RR^n$ by 
\begin{gather}
L = \id +D^2f,
\label{L_def}
\end{gather}
and let 
\begin{gather}
L_1 = P L,
\label{L_1_def}
\end{gather}
and
\begin{gather}
L_2 = QL,
\label{L_2_def}
\end{gather}where 
$P$ and $Q$ are as in section \ref{auxiliary}.  Notice that $L_1$ is invertible on the image of $P$ if $f$
is small, since in this case it will be close to the identity. 
Then we can write an arbitrary vector field $X$ as
\begin{gather}
X = L L_1^{-1} P (X) + ( Q - L_2 L_1^{-1}P ) (X),
\label{decomp}
\end{gather}
and thus effect the decomposition
\begin{gather}
H^s(\Om, \RR^n) = LP(H^s(\Om, \RR^n)) \oplus Q(H^s(\Om,\RR^n)).
\nonumber
\end{gather}
Decomposing (\ref{equation_ddot_nabla_f}) in this fashion,
and recalling that $f$ also has to satisfy (\ref{nl_Dirichlet_f}),
we obtain
\begin{subnumcases}{\label{system_f_v_full}}
\nabla \ddot{f}  +   ( Q - L_2 L_1^{-1}P ) 
( 2 D_v \nabla \dot{f}  + D^2_{vv} \nabla f + LQ(\nabla_v v) ) \nonumber \\
\hspace{2.8cm}
= -(Q - L_2 L_1^{-1}P) (\nabla p\circ (\id + \nabla f) ) & in $\Om$,
\label{system_f_v_f_dot_dot} \\
\Delta f + \cN(f) \hspace{0.75cm} = 0 & in $\Om$, 
\label{system_f_nl_Dirichlet}  \\
 \dot{v} +  P(\nabla_v v) + L_1^{-1}P (
  2 D_v \nabla\dot{f} + D^2_{vv} \nabla f )
  + L_1^{-1} P( LQ( \nabla_v v) ) \nonumber \\
  \hspace{2.8cm} = -L_1^{-1} P  (\nabla p\circ (\id + \nabla f) )   & in $\Om$, 
  \label{system_f_v_v_dot} \\
\nabla f (0) = 0, \, \, \nabla \dot{f} (0) = Q u_0, \,\,
 v(0) = P u_0, \label{initial conditions}
\end{subnumcases}
where $\cN$ is given by (\ref{non_linear_terms_f_extension}).
Equation (\ref{system_f_v_v_dot}) implies   that $v$ also satisfies $\dive(v) = 0$ and
$\langle v , \nu \rangle = 0$, where $\nu$ is the unit outer normal to
$\partial \Om$, as $v$ is in the image of $P$.
The system
(\ref{system_f_v_full}) contains two equations for $f$, namely,
(\ref{system_f_v_f_dot_dot}) and (\ref{system_f_nl_Dirichlet}), but no boundary
condition. We shall transform these into a more standard evolution
equation for $f$ by restricting (\ref{system_f_v_f_dot_dot}) to $\partial \Om$,
obtaining an evolution equation for $\left. f \right|_{\partial \Om}$, 
with an extension 
to $\Om$ given via (\ref{system_f_nl_Dirichlet}).

It is illustrative to point out that (\ref{system_f_v_v_dot}) formally reduces
to the Euler equations in the fixed domain when $\nabla f \equiv 0$ and thus, 
formally, $\zeta = \beta$. This is in agreement
with the intuition discussed in the introduction that if the boundary displacements
controlled by $\nabla f$ approach zero when $\kappa \rar \infty$, the solution
$\eta$ should approach $\zeta$. Such an intuitive appeal notwithstanding, 
equation (\ref{system_f_v_v_dot}) will not be used directly in our proof.
There are two reasons for this.
First, as explained in the introduction, there is no reason
to suspect that  the pressure will converge when
we take the limit $\kappa \rar \infty$, hence no good control of the right-hand
side of (\ref{system_f_v_v_dot}) can be expected. Second, 
in view of the regularity of $p$  stated in theorem (\ref{main_theorem}), 
the right hand side of (\ref{system_f_v_v_dot})
will be in $H^{s-\frac{3}{2}}$. Known techniques, therefore, can only yield
$v$, and hence $\beta$, in $H^{s-\frac{3}{2}}$. However, $\beta \in H^s(\Om)$ 
is required for $\eta \in H^s(\Om)$, see (\ref{def_f_beta}). Our proof
overcomes these difficulties by making the most of the Lagrangian description of the 
fluid, which is consistent with the general idea that Lagrangian coordinates
are ``better behaved" than Eulerian ones (see similar discussion in \cite{E2} and
\cite{ED}).

\section{Geometry of the boundary and analysis of $\nabla f$ \label{section_geom_boundary} }

In light of proposition \ref{embedding}, $f_\kappa$ is determined
by its boundary values, provided it is small. In this section, we shall
show that  $\left. f_\kappa \right|_{\partial \Om}$ obeys an equation of the form
\begin{align}
\begin{cases}
\ddot{f}_\kappa = \ccA_\kappa(\beta_\kappa, v_\kappa, p_\kappa, f_\kappa) + \ccB_\kappa(\beta_\kappa, v_\kappa, p_\kappa, \dot{f}_\kappa ) + \ccC_\kappa(\beta_\kappa, v_\kappa, p_\kappa) \,\,\text{ on } \, \,
\partial \Om \\
 f_\kappa(0) = 0, \, \dot{f}_\kappa(0) = f_1,
 \end{cases} 
\nonumber
\end{align}
where $\ccA_\kappa$ is a third order pseudo-differential operator on 
$f_\kappa$, $\ccB_\kappa$ is
first order, $\ccC_\kappa$ is a zeroth order operator on $v_\kappa$,
and $f_0$ and $f_1$ are known functions.
The desired equation will be equation (\ref{eq_f_bry}) below, and the present section 
is  a derivation of (\ref{eq_f_bry}) from (\ref{system_f_v_f_dot_dot}).
This amounts essentially to a series of lengthy calculations, and some readers
may want to skip them and move directly to section \ref{section_f_boundary}.

From now on, the subscript 
$\kappa$ will be omitted in all quantities.
Throughout this sections we assume we are given a sufficiently regular
solution of (\ref{system_f_v_full}). Moreover, as 
many of the derivations below are valid provided that 
$f$ is sufficiently small, we shall assume so throughout this 
section. This smallness condition will be made precise in section 
\ref{section_f_boundary}. Several standard geometric constructions will be employed
below. They can be found, for example, in \cite{Han,Spivak}.

\begin{notation} Let 
\begin{gather}
\widetilde{\eta} = \eta \circ \beta^{-1} \equiv \id + \nabla f .
\nonumber 
\end{gather}
Unless stated otherwise, from now on quantities with $\widetilde{~}$
are defined on the domain $\widetilde{\eta}(\Om)$. For example, 
if $N$ denotes the normal to $\partial \eta(\Om)$, then
$\widetilde{N}$ is the normal to $\partial \widetilde{\eta}(\Om)$.
\end{notation}

\subsection{A rewritten equation for $\nabla f$}
From (\ref{L_def}), (\ref{L_1_def}), and (\ref{L_2_def}), we see that
\begin{align}
\begin{split}
Q - L_2 L_1^{-1}P 
 & = Q \left( \id - (\id + D^2 f) L_1^{-1} P \right) \\
 & = Q - Q D^2 f L^{-1} P,
 \end{split}
\nonumber
\end{align}
where we used the fact that $Q$ vanishes on the image of $P$. 

Let
\begin{gather}
\widetilde{F} = \cA_H \circ \widetilde{\eta}.
\label{F_tilde_def}
\end{gather}
and 
\begin{gather}
\widetilde{q}_0 = p_0 \circ \widetilde{\eta}.
\label{q_0_tilde_def}
\end{gather}
Notice that $\widetilde{F}$ depends on $f$. A calculation gives
\begin{gather}
\nabla \cA_H \circ \widetilde{\eta} = \nabla \widetilde{F} ( D \widetilde{\eta} )^{-1},
\label{nabla_A_eta_tilde}
\end{gather}
and 
\begin{gather}
\nabla  p_0 \circ \widetilde{\eta} = \nabla \widetilde{q}_0 ( D \widetilde{\eta} )^{-1},
\label{nabla_p_0_eta_tilde}
\end{gather}
Then (\ref{F_tilde_def}), (\ref{q_0_tilde_def}), (\ref{nabla_A_eta_tilde}), (\ref{nabla_p_0_eta_tilde}),
and (\ref{split_pressure}) give
\begin{gather}
\nabla p \circ \widetilde{\eta} = \nabla \widetilde{q}_0
+  
\nabla \widetilde{q}_0\left( ( D \widetilde{\eta} )^{-1} - \id  \right) +
\kappa \nabla \widetilde{F} + \kappa \nabla  \widetilde{F}(-\id + 
(D\widetilde{\eta})^{-1} ).
\nonumber
\end{gather}

Using the above, (\ref{system_f_v_f_dot_dot}) can be written as
\begin{align}
\begin{split}
\nabla \ddot{f} & + \kappa \nabla \widetilde{F} +
\kappa \nabla \Delta_\nu^{-1} \dive \left(\nabla  \widetilde{F}(-\id 
+ (D\widetilde{\eta})^{-1} ) \right)
- \kappa \nabla \Delta_\nu^{-1} \dive\left( D^2 f L_1^{-1} P (\nabla \widetilde{F}
(D\widetilde{\eta})^{-1} ) \right) \\
& + 2\nabla \Delta_\nu^{-1} \dive\left( D_v \nabla \dot{f} \right)
- 2 \nabla \Delta_\nu^{-1} \dive\left( D^2 f L_1^{-1} P D_v \nabla \dot{f} \right) \\
& + \nabla \Delta_\nu^{-1} \dive\left( D_{vv}^2 \nabla f\right)
- \nabla \Delta_\nu^{-1} \dive\left( D^2 f L_1^{-1} P D_{vv}^2 \nabla f \right) \\
& + \nabla \Delta_\nu^{-1} \dive\left( D^2 f Q(\nabla_v v) \right)
- \nabla \Delta_\nu^{-1} \dive\left( D^2 f L_1^{-1} P D^2f Q(\nabla_v v ) \right) \\
& 
+ \nabla  \Delta_\nu^{-1} \dive\left(  \nabla \widetilde{q}_0 \left( 
(D \widetilde{\eta})^{-1} - \id \right) \right) 
 - \nabla \Delta_\nu^{-1} \dive\left( 
D^2 f L_1^{-1} P \left( \nabla \widetilde{q}_0( (D\widetilde{\eta})^{-1} - \id ) \right)
\right) \\
= & - \nabla \widetilde{q}_0 - \nabla \Delta_\nu^{-1} \dive\left( \nabla_v v \right).
\end{split}
\label{new_form_f_eq}
\end{align}
In the above, the terms in $\nabla \Delta_\nu^{-1} \dive$ appear upon writing $Q$ explicitly.
 The operator $\Delta_\nu^{-1} \circ \dive$ is given by
\begin{gather}
 \Delta_\nu^{-1} \dive\left( w \right) = g,
\nonumber
\end{gather}
where $g$ solves
\begin{gather}
\begin{cases}
\Delta g = \dive(w), & \text{ in } \Om, \\
\frac{\partial g}{\partial \nu} = \langle w, \nu \rangle, & \text{ on } \partial \Om.
\end{cases}
\nonumber
\end{gather}
Notice that $\Delta_\nu^{-1} \circ \dive$ is defined up to an additive constant, so 
$\nabla \Delta_\nu^{-1} \circ \dive$ is defined uniquely. 
\begin{remark}
We notice for further reference, that in (\ref{new_form_f_eq})
the first term in every line, except for the last and the next-to-the last lines, 
is linear in $f$, with the remaining terms being non-linear (in $f$).
\label{remark_linear_non_linear_terms_f_eq}
\end{remark}

\subsection{Local coordinates\label{section_local_coord}}
In order to have a more explicit description of the operator $\widetilde{F}$ acting on $f$,
we employ local coordinates.

Working locally, we choose coordinates
$(x^1,x^2,x^3)$
near $\partial \Om$ such that the domain and its boundary are  given by
\begin{gather}
\Om = \{ x^ 3 > 0 \}, \hspace{0.5cm} 
\partial \Om = \{ x^3 = 0 \},
\nonumber
\end{gather}
so that 
\begin{gather}
\left. \frac{\partial}{\partial x^1}\right|_{x^3=0} 
\hspace{0.5cm} \text{ and } \hspace{0.5cm} 
\left. \frac{\partial}{\partial x^2 }\right|_{x^3=0}
\nonumber
\end{gather}
are tangent to $\partial \Om$. We write $x=(x^\prime,x^3)$.
In these coordinates, the Euclidean metric is represented by the matrix
\begin{gather}
g = (g_{\al\be}), \, \al, \be = 1, 2, 3,
\nonumber
\end{gather}
with the induced metric on $\partial \Om$ being simply
\begin{gather}
g_{ij}(x^\prime,0), \,  i,j = 1,2.
\nonumber
\end{gather}
Also we can assume that $g_{33} = 1$ and $g_{i3} = 0$, $i=1,2$.
\begin{notation}
Unless stated otherwise, Greek indices will run over $1,2,3$ and Latin indices
over $1,2$. $(\al\be)$ means symmetrization on $\al$, $\be$, i.e., $t_{(\al\be)}
= \frac{1}{2}( t_{\al\be} + t_{\be\al} )$. The summation
convention is assumed throughout.
\label{indices_convention}
\end{notation}
If $U$ is the coordinate chart, we always assume $x \in V \subset \subset U$
so that $\eta(x) \in U$ and $\widetilde{\eta}(x) \in U$; this is always
possible when $\widetilde{\eta}$ is near the identity, which will be the
case of interest below. Let
\begin{gather}
r = \left. \eta \right|_{\partial \Om},
\nonumber
\end{gather}
i.e.
\begin{gather}
r(x^\prime) = \eta(x^\prime, 0),
\nonumber
\end{gather}
and write $r = (r^1, r^2, r^3)$. Analogously we have $\widetilde{r}$.
With $\beta = (\be^1,\be^2,\be^3)$, it follows that
\begin{gather}
r^\al(x^\prime) = \be^\al(x^\prime,0) + \nabla f^\al \circ \beta(x^\prime,0),
\nonumber
\end{gather}
where
\begin{gather}
\nabla f^\al = g^{\al\mu}\partial_\mu f.
\nonumber
\end{gather}
Notice that since $\beta(\partial \Om) = \partial \Om$, 
\begin{gather}
\be^3(x^\prime, 0 ) = 0.
\nonumber
\end{gather}
A basis $\{X_1, X_2 \}$ for the tangent space of $r(\partial \Om) = \partial \eta(\Om)$ is 
given by 
\begin{gather}
X_i = \partial_i r = Dr \left(\left.  \frac{\partial}{\partial x^i}\right|_{x^3=0} \right),
\nonumber
\end{gather}
where $Dr$ is the derivative of $r$. Component-wise,
\begin{gather}
X^\al_i = g^{\al\mu} \partial_{\mu\nu } f \circ \beta \partial_i \, \beta^\nu 
+ \partial_i g^{\al\mu} \partial_\mu f \circ \beta + \de^\al_j \partial_i \be^j,
\nonumber
\end{gather}
where we have used $\be^3(x^\prime,0) = 0$ and $\de^\al_\be$ is the
Kronecker delta.

The unit (inward) normal to $r(\partial \Om)$
is 
\begin{gather}
N^\al = \frac{ \ve^\al_{\mss \be \ga} X_1^\be X_2^\ga}{ 
\sqrt{ 
\ve^\la_{\mss \mu \nu} \ve_{\la\si\tau} X_1^\mu X_1^\si X_2^\nu X_2^\tau 
}
},
\nonumber
\end{gather}
where $\ve_{\al\be\ga}$ is the totally anti-symmetric tensor (with the convention
$\ve_{123}=1$). The metric $\overline{g}$ induced on $r(\partial \Om)$ is
\begin{gather}
\overline{g}_{ij} = g(X_i, X_j) = g_{\al\be} X^\al_i X^\be_j.
\nonumber
\end{gather}
More explicitly,
\begin{align}
\begin{split}
\overline{g}_{ij}  & = 
g^{\al\be} \partial_{\be\nu} f \circ \be \partial_{\al\la} f \circ \be \partial_i \be^\nu 
\partial_j \be^ \la
+ 2 \partial_{k\nu} f \circ \be \partial_{(i} \be^\nu \partial_{j)} \be^k
\\
& + 2 \partial_{\al \nu} f \circ \be \partial_\mu f \circ \be \partial_{(i } g^{\al\mu} 
\partial_{j)} \be^\nu
+ 2 g_{\al k } \partial_\mu f \circ \be \partial_{( i }g^{\al\mu} \partial_{j)} \be^k
\\
& + g_{\al\be} \partial_i g^{\al\mu} \partial_j g^{\be\la} \partial_\mu f \circ \be 
\partial_\la f \circ \be 
+ g_{kl} \partial_i \be^k \partial_j \be^l,
\end{split}
\nonumber
\end{align}
where $(ij)$ means symmetrization in $i,j$ (see notation \ref{indices_convention}).
The second fundamental form of $\partial \eta(\Om)$ is defined by
the equivalent expressions
\begin{gather}
\cA_{ij} = - g(\nabla_i N, X_j) =  g(N,\nabla_i X_j ),
\nonumber
\end{gather}
where $\nabla$ is the Levi-Civita connection, and the negative 
sign on the first equality occurs because $N$ is the inner normal. Component-wise
\begin{align}
\begin{split}
\cA_{ij} & = g_{\al \be} N^\al \nabla_i X_j^\be \\
& = g_{\al\be} \partial_i X_j^\be N^\al + g_{\al\be} \Ga^\be_{i \mu} X^\mu_j N^\al,
\end{split}
\label{sff_def}
\end{align}
where $\Ga_{\al\be}^\ga$ are the Christoffel symbols.
Computing get
\begin{align}
\begin{split}
\partial_i X^\al_j & = g^{\al \mu} \partial_{\mu\la\nu} f \circ \be 
\partial_i \be^\nu \partial_j \be^\la + g^{\al\mu} \partial_{\mu \nu} f \circ \be 
\partial_{ij} \be^\nu + 2 \partial_{(i} \be^\nu \partial_{j)} g^{\al\mu} \partial_{\mu\nu} f \circ \be \\
& + \partial_{ij} g^{\al\mu} \partial_\mu f \circ \be + \de^\al_{\mss k} \partial_{ij} \be^k.
\end{split}
\nonumber
\end{align}
Also the mean curvature is defined by
\begin{gather}
\cA = \overline{g}^{ij} \cA_{ij}.
\label{mean_curvature_def}
\end{gather}

Note that the pressure splits into an interior term and a boundary term,
$p = p_0 + \kappa \cA_H$ (see (\ref{free_boundary})), 
and since (\ref{system_f_v_f_dot_dot}) involves 
$\nabla \cA_H \circ \widetilde{\eta}$ rather than
$\nabla \cA_H \circ \eta$, we shall need expressions for quantities
on the boundary $\widetilde{r}(\partial \Om) = \partial \widetilde{\eta}(\Om)$.
This amounts to setting $\be = \id$ in the above expressions, 
leading to
\begin{gather}
\widetilde{X}_i^\al = g^{\al\mu} \partial_{\mu i} f
+ \partial_i g^{\al\mu} \partial_\mu f + \de^\al_{\mss i},
\label{set_beta_id_X}
\end{gather}
\begin{gather}
\partial_i \widetilde{X}_j^\al = g^{\al \mu} \partial_{\mu ij} f + 
2 \partial_{(i} g^{\al\mu} \partial_{j) \mu} f + \partial_{ij} g^{\al\mu} \partial_\mu f,
\label{set_beta_id_partial_X}
\end{gather}
\begin{gather}
\widetilde{N}^\al = \frac{ g^{\al3} + T^\al(f) }{ 
\sqrt{ 
1 + 2 g^{\la 3} T_\la(f) + T_\la(f) T^\la(f) 
}
},
\label{set_beta_id_N}
\end{gather}
and
\begin{align}
\begin{split}
\widetilde{g}_{ij} & = g^{\mu\nu} \partial_{\mu i} f \partial_{\nu j } f 
+ 2 \partial_{ij} f + 2 \partial_{\mu(i} f \partial_{j)} g^{\mu\nu} \partial_\nu f
+ g_{\al\be} \partial_i g^{\al\mu} \partial_\mu f \partial_j g^{\be\nu} \partial_\nu f 
\\
&+ g_{\al j} \partial_i g^{\al\mu} \partial_\mu f + g_{\be i} \partial_j g^{\be\mu} 
\partial_\mu f + g_{ij},
\end{split}
\label{g_tilde_def}
\end{align}
where $\widetilde{g}$ is the induced metric on $\widetilde{r}(\partial \Om)$
and
\begin{align}
\begin{split}
T_\al(f) & = \ve_{\al\be\ga}( g^{\be\mu} \partial_{\mu 1} f + \partial_1 g^{\be\mu}
\partial_\mu f)
(g^{\ga\mu} \partial_{\mu 2} f + \partial_2 g^{\ga \mu} \partial_\mu f)
\\
& + \ve_{\al\be 2}(g^{\be\mu} \partial_{\mu 1} f + \partial_1 g^{\be\mu} \partial_\mu f)
+ \ve_{\al 1 \ga}(g^{\ga\mu} \partial_{\mu 2} f + \partial_2 g^{\ga \mu} \partial_\mu f).
\end{split}
\label{T_def}
\end{align}
Recall that $g_{33} =1$ and $g_{3 i } = 0$. Had this not been the case,
the first term inside the square root in the expression for $\widetilde{N}$
would be $g^{33}$.

We shall also need the inverse of the induced metric $\widetilde{g}$, which is given by
\begin{gather}
\widetilde{g}^{-1} = \frac{1}{\det \widetilde{g} }
\left(\begin{array}{cc}
\widetilde{g}_{22} & - \widetilde{g}_{12} \\
- \widetilde{g}_{21} & \widetilde{g}_{11}
\end{array}\right).
\label{g_tilde_inverse}
\end{gather}

We proceed to obtain more manageable expressions than those above.
 Using Taylor's theorem with integral remainder,
\begin{align}
\begin{split}
 & \frac{ 1 }{ 
\sqrt{ 1 + 2 g^{\la 3} T_\la(f) + T_\la(f) T^\la(f)  } } \\
&   = 1 - \left( g^{\la 3} T_\la(f) + \frac{1}{2}T_\la(f) T^\la(f) \right)
\bigintss_0^1 \frac{ 1-t}{ \left[1 + t(2g^{\la 3} T_\la(f) + T_\la(f)
T^\la(f) ) \right]^\frac{3}{2} } \, dt \\
& = 1 + \cM(f),
\end{split}
\label{Taylor1}
\end{align}
where $\cM(f)$ is defined by the above expression.
Combining (\ref{set_beta_id_N}), (\ref{T_def}), and  (\ref{Taylor1}),
\begin{align}
\begin{split}
\widetilde{N}^\al & = (g^{\al 3} + T^\al(f) )(1+ \cM(f) ) \\
& = g^{\al 3} + \cM^\al(f),
\end{split}
\label{N_tilde_expansion}
\end{align}
where $M^\al(f)$ is defined by the above expression.

We write (\ref{g_tilde_def}) as 
\begin{gather}
\widetilde{g}_{ij} = g_{ij} + \cN_{ij}(f),
\label{g_tilde_N_f}
\end{gather}
with $\cN_{ij}(f)$ defined in an obvious way. From this,
\begin{gather}
\det ( \widetilde{g} ) = \det(g) + \cD(f),
\nonumber
\end{gather} 
where $\cD(f)$ contains all terms in $\det ( \widetilde{g} )$ that depend 
on $f$. Using Taylor's theorem again,
\begin{align}
\begin{split}
\frac{1}{ \det ( \widetilde{g} )} & = 
\frac{1}{\det(g)} - \frac{\cD(f)}{(\det(g))^2}
\bigintss_0^1 \frac{ 1- t}{ \left( 1 + t \frac{\cD(f)}{\det(g)} \right)^2} \, dt.
\end{split}
\label{Taylor2}
\end{align}
From (\ref{g_tilde_inverse}), (\ref{g_tilde_N_f}), and (\ref{Taylor2}),
we find

\begin{align}
\begin{split}
\widetilde{g}^{-1} 
& = 
\left( 
\frac{1}{\det(g)} - \frac{\cD(f)}{(\det(g))^2}
 \bigintss_0^1 \frac{ 1- t}{ \left( 1 + t \frac{\cD(f)}{\det(g)} \right)^2} \, dt 
\right)
\Big(\begin{array}{cc}
g_{22} + \cN_{22}(f) & - g_{12} - \cN_{12}(f) \\
- g_{21}- \cN_{21}(f)  & g_{11} + \cN_{11}(f) 
\end{array} \Big ) \\
& = g^{-1} + \cF(f),
\end{split}
\nonumber
\end{align}
where the matrix $\cF(f)$ is defined by this expression and contains all 
contributions in $f$. We write the above component-wise as
\begin{gather}
\widetilde{g}^{ij} = g^{ij} + \cF^{ij}(f).
\label{g_tilde_expansion}
\end{gather}
We remark that  $-g^{\al3} \partial_\al f = \partial_\nu f$ (the outer normal derivative of $f$). 
Also, since the boundary Laplacian $\overline{\Delta}$ is given by 
(see remark \ref{indices_convention})
\begin{gather}
\overline{\Delta} = \frac{1}{\sqrt{|g|}} \partial_i\left ( \sqrt{|g|} g^{ij} \partial_j \right ),
\nonumber
\end{gather}
with $|g| =\det(g_{ij})$, we can write
\begin{gather}
g^{ij} g^{\al 3} \partial_{ij  \al} f  =
 -\overline{\Delta} \partial_\nu f + \mathcal{X}(f),
\nonumber
\end{gather}
where $\cX(f)$, which is defined by this expression, involves at most second
derivatives of $f$. We also note 
that in the present system of coordinates, the second fundamental form
of $\partial \Om$ is simply $\Ga_{ij}^3$, where $\Ga$ are the Christoffel
symbols, and the mean curvature of $\partial \Omega$, which we denote
$\cA_{\partial \Om}$, is $g^{ij} \Ga_{ij}^3$.
From these observations,  (\ref{sff_def}), (\ref{mean_curvature_def}), (\ref{set_beta_id_X}), (\ref{set_beta_id_partial_X}),
(\ref{N_tilde_expansion}), and (\ref{g_tilde_expansion}),
we conclude that 
\begin{gather}
\widetilde{F} = \cA \circ \widetilde{\eta} = 
 -\overline{\Delta} \partial_\nu f  
 -\frac{1}{2} \cA_{\partial \Om} \overline{\Delta} f + 
   \psQ f  +  \psq f 
   + \cA_{\partial \Om}
      \text{ on } \partial \Om,
\label{F_tilde_operator}
\end{gather}
where $\psQ$ 
and $\psq$ are, respectively,
third- and second-order pseudo-differential operators.
In the present coordinate system, they take the following form
\begin{align}
\begin{split}
\psQ h & = \left( \cF^{ij}(f) g^{\al 3} + g^{ij} \cM^\al(f) 
+\cF^{ij}(f) \cM^\al(f) \right)\partial_{\al ij} h,
\end{split}
\label{psQ_linear}
\end{align}
and
\begin{align}
\begin{split}
\psq h & = 
\left( \cF^{ij}(f) + g^{ij} \cM^\al g_{\al\be} + g_{\al\be} \cF^{ij}(f) \cM^\al(f) 
\right)
\left( 2 \partial_{(i } g^{3\mu}
\partial_{j)} h + \partial_{ij} g^{3\mu} \partial_\mu h \right)  
\\ 
&
+ \left( \Ga_{i\mu}^3 \cF^{ij}(f) + g^{ij} g_{\al\be} \Ga_{i\mu}^\be \cM^\al(f)
+ g_{\al\be} \Ga_{i\mu}^\be \cF^{ij}(f) \cM^\al(f) \right)
\Big( g^{\mu \la} \partial_{\la j} h  \\
& + \partial_j g^{\mu\la}\partial_\la h \Big)
+ \cQ^{ij}(h).
\end{split}
\label{psq_linear}
\end{align}
In the above, $\cQ^{ij}(h)$ is the linear operator in $h$
with $f$-dependent coefficients, 
naturally associated with the term
$g_{\al\be} \Ga_{ij}^\be \cF^{ij}(f) \cM^\al(f) + \Ga^3_{ij} \cF^{ij}(f) + g_{\al\be}g^{ij} \Ga_{ij}^3 \cM^\al(f)$
that figures in the mean curvature. More precisely, from our definitions it follows
that
\begin{gather}
g_{\al\be} \Ga_{ij}^\be \cF^{ij}(f) \cM^\al(f) + \Ga^3_{ij} \cF^{ij}(f) + g_{\al\be}g^{ij} \Ga_{ij}^3 \cM^\al(f) =
 a^i(f)\partial_{i 3} f + b^{ij}(f) \partial_{ij} f + 
c^\al(f) \partial_\al f,
\nonumber
\end{gather}
where $a^i$, $b^{ij}$, and $c^\al$ are smooth functions of $f$
and its derivatives of order at most two, provided that $f$ is small 
(see the beginning of this section). Then,
\begin{gather}
\cQ^{ij}(h) = 
a^i(f)\partial_{i 3} h + b^{ij}(f) \partial_{ij} h + 
c^\al(f) \partial_\al h.
\nonumber
\end{gather}
 The explicit form of  of $\cQ^{ij}$ is
too long and cumbersome, and will not be necessary for our purposes.

Summing up, $\psQ$ and $\psq$ are,
 respectively,
third- and second-order pseudo-differential operators 
whose coefficients depend smoothly on $f$ and its derivatives of at most 
second order, and such that $\psQ  = \psq$ = 0 if $f=0$ (in particular, 
$\psQ$ and $\psq$ contain no zeroth order terms in $f$)

\section{Analysis of $\left. f \right|_{\partial \Om}$\label{section_f_boundary}}
We shall now work modulo constants. This suffices to our purposes since we are interested
in obtaining estimates for $\nabla f$. Doing so, we can drop the gradient
in front of every term (\ref{new_form_f_eq}), obtaining an equation for $f$ which,
upon restriction to the boundary, gives an equation for $\left. f \right|_{\partial \Om}$.
It reads, after using (\ref{F_tilde_operator}),
\begin{align}
\begin{split}
 \ddot{f} &
 -\kappa \overline{\Delta} \partial_\nu f  
 -\frac{1}{2} \kappa \cA_{\partial \Om} \overline{\Delta} f
  + 
   \kappa \psQ f  +  \kappa \psq f 
 \\
& +
\kappa  \Delta_\nu^{-1} \dive \Big[ \Big(\nabla \cH_{\widetilde{\eta}} (
 -\overline{\Delta} \partial_\nu f  
 -\frac{1}{2} \cA_{\partial \Om} \overline{\Delta} f + 
   \psQ f  \\
   & +  \psq f 
    ) \Big) (-\id + (D\widetilde{\eta})^{-1} ) \Big] 
 \\
& 
- \kappa  \Delta_\nu^{-1} \dive\Big[ D^2 f L_1^{-1} P \Big(
\Big(\nabla \cH_{\widetilde{\eta}} ( 
 -\overline{\Delta} \partial_\nu f  
 -\frac{1}{2} \cA_{\partial \Om} \overline{\Delta} f + 
   \psQ f  \\
   & +  \psq f 
     )\Big)
(D\widetilde{\eta})^{-1} \Big) \Big] \\
& + 2 \Delta_\nu^{-1} \dive\left( D_v \nabla \dot{f} \right)
- 2  \Delta_\nu^{-1} \dive\left( D^2 f L_1^{-1} P D_v \nabla \dot{f} \right) \\
& +  \Delta_\nu^{-1} \dive\left( D_{vv}^2 \nabla f\right)
-  \Delta_\nu^{-1} \dive\left( D^2 f L_1^{-1} P D_{vv}^2 \nabla f \right) \\
& +  \Delta_\nu^{-1} \dive\left( D^2 f Q(\nabla_v v) \right)
-  \Delta_\nu^{-1} \dive\left( D^2 f L_1^{-1} P D^2f Q(\nabla_v v ) \right) \\
& +   \Delta_\nu^{-1} \dive\left(  \nabla \widetilde{q}_0 \left( 
(D \widetilde{\eta})^{-1} - \id \right) \right) 
 -  \Delta_\nu^{-1} \dive\left( 
D^2 f L_1^{-1} P \left( \nabla \widetilde{q}_0( (D\widetilde{\eta})^{-1} - \id ) \right)
\right) \\
= &  -  \Delta_\nu^{-1} \dive\left( \nabla_v v \right),
\, \text{ on } \, \partial \Om.
\end{split}
\label{eq_f_bry}
\end{align}
Above, $\cH$ is the harmonic extension operator in the domain 
$\partial \widetilde{\eta} (\Om) \equiv \partial ((\id +\nabla f)(\Om))$, and we recall
that  $\cH_{\widetilde{\eta}}$ is given by (see notation \ref{notation_sub})
\begin{gather}
\cH_{\widetilde{\eta}}(h) = (\cH( h \circ \widetilde{\eta}^{-1} ))\circ \widetilde{\eta},
\nonumber
\end{gather}
for $h: \partial \Om \rar \RR$.  In (\ref{eq_f_bry}),
the function $h$ in the argument of $\cH_{\widetilde{\eta}}$ is
\begin{gather}
 -\overline{\Delta} \partial_\nu f
-\frac{1}{2} \cA_{\partial \Om} \overline{\Delta} f + 
\psQ f   +  \psq f.
\nonumber
\end{gather} 

The operators $\psQ$ and
$\psq$ were defined in section \ref{section_local_coord}, although their precise
form will not be important here. Rather, it will be important that 
\begin{gather}
\p \psQ h \p_{s,\partial} \leq C \p f \p_{s+2,\partial} \p h \p_{s+3,\partial},
\nonumber
\end{gather}
and
\begin{gather}
\p \psq h \p_{s,\partial} \leq C \p f \p_{s+2,\partial} \p h \p_{s+2,\partial}.
\nonumber
\end{gather}
\begin{remark}
In (\ref{eq_f_bry}), the terms in $\kappa \cA_{\partial \Om}$ have been
dropped because $\cA_{\partial \Om}$ is assumed constant,
and therefore these terms do not contribute in (\ref{new_form_f_eq}).
Had $\cA_{\partial \Om}$ not been constant, such terms, linear in $\kappa$,
would grow without bound in the limit $\kappa \rar \infty$,
agreeing with the intuitive idea discussed in the introduction that
no convergence should be obtained in this case. 
The term $-\widetilde{q}_0$ corresponding to $- \nabla \widetilde{q}_0$ does not figure in equation (\ref{eq_f_bry}) either
in that $\widetilde{q}_0$ vanishes on $\partial \Om$, since $p_0$ vanishes on 
$\partial \eta(\Om)$.
\label{remark_constant_mean_curvature}
\end{remark}
(\ref{eq_f_bry}) was derived under the assumption that $f$ is a sufficiently
regular and small solution to (\ref{system_f_v_full}). In particular,
$f$ was thought of as defined over $\Om$. When viewing (\ref{eq_f_bry})
as an equation for $\left. f \right|_{\partial \Om}$, it is important to 
realize that it depends on the way $\left. f \right|_{\partial \Om}$
is extended to $\Om$. We are ultimately interested in the case
when such an extension is carried out using proposition \ref{embedding}.
However, in order to apply techniques of continuous semi-groups,
we need the extension to be defined for any function in $H^{s+2}(\partial \Om)$,
$s > \frac{3}{2} + 2$, and not only for those that are sufficiently small.

Let $\psi: \RR \rar [0,1]$ be a smooth monotone function such that 
$\psi =1$ on $(-\infty,\frac{\de_0}{3}]$, 
and $\psi = 0$ on $[2\frac{\de_0}{3}, \infty)$, 
where $\de_0$ is given by propostion \ref{embedding}.
 Given $h \in H^{s+2}(\partial \Om)$,
consider the problem
\begin{subnumcases}{\label{nl_Dirichlet_mod}}
\Delta f + \psi(\p h \p_{s+2,\partial}^2 ) \cN(f)  = 0 & in $\Om$,
 \label{nl_Dirichlet_mod_f} \\
f = h & on $\partial \Om$. \label{nl_Dirichlet_bry_mod}
\end{subnumcases}
where $\cN$ is as in (\ref{non_linear_terms_f_extension}).

To solve (\ref{nl_Dirichlet_mod}), we argue as in the proof
of proposition \ref{embedding}. We define a map
\begin{align}
& F: H^{s+2}(\partial \Om) \times H^{s+\frac{5}{2}}(\Om) 
\rar H^{s+2}(\partial \Om) \times H^{s + \frac{1}{2}}(\Om) ,
\nonumber \\
{\rm by} \:\:\:\:\:\: & F(h, f) = ( f\left|_{\partial \Om}\right. - h, \Delta f +\psi(\p h \p_{s+2,\partial}^2 )\cN(f) ),
\nonumber
\end{align}
and notice that $F$ is $C^1$, since in a Hilbert space the square norm
function is continuously differentiable. Moreover, its linearization
is given by (\ref{linearization}). Thus, arguing as in proposition
\ref{embedding},  
we find a small $r>0$ and a map
$\varphi :\cB_{r}^{s+2}(\partial \Om) \rar H^{s+\frac{5}{2}}(\Om)$,
where $\varphi(h) = f$
 gives a unique
solution to (\ref{nl_Dirichlet_mod}). Noticing that 
 (\ref{linearization})
does not involve $\psi$, if $\de_0 >0$ in proposition
\ref{embedding} is sufficiently small, we can take
$r\geq \de_0$. Solutions to (\ref{nl_Dirichlet_mod})
agree with those of (\ref{nl_Dirichlet_f_3d_system}) if
$\p h \p_{s+2,\partial} <  \sqrt{\frac{\de_0}{3}}$, and with the harmonic
extension of $h$ if $\p h \p_{s+2,\partial} > \sqrt{\frac{2\de_0}{3}}$. 
Defining $\varphi$ as the harmonic extension when 
$\p h \p_{s+2,\partial} > \sqrt{\frac{2\de_0}{3}}$, we obtain a map
$\varphi: H^{s+2}(\partial \Om) \rar H^{s+\frac{5}{2}}(\Om)$, given
by $\varphi(h) = f$, where $f$ solves (\ref{nl_Dirichlet_mod}), and agrees
with the solution of (\ref{nl_Dirichlet_f_3d_system}) if $h$ is
sufficiently small. 

Furthermore, since $\varphi$ is continuous, given $\epsilon >0$ we
can choose $\de_0$ so small that 
$\p \varphi(h) \p_{s+\frac{5}{2}} < \epsilon$. But if $f$ is a solution of (\ref{nl_Dirichlet_mod}) with $\p f \p_{s+\frac{5}{2}} \leq \epsilon$, 
and 
$\epsilon$ is sufficiently small, then by elliptic theory the solution  
obeys the estimate
\begin{gather}
\p f \p_{s+\frac{5}{2}} \leq C \p h \p_{s+ 2, \partial  },
\label{elliptic_estimate_f_bry}
\end{gather}
where the constant $C$ depends only on $\epsilon$, $s$, $\Om$.
Finally, if $h_H$ is the harmonic extension of $h$, from standard elliptic 
theory we have the estimate
\begin{gather}
\p f - h_H \p_{s+\frac{5}{2} }
\leq C \p f \p_{s+\frac{5}{2}}^2 \leq C \p f \p_{s+2,\partial}^2,
\label{f_near_harmonic}
\end{gather}
where the last inequality follows by (\ref{elliptic_estimate_f_bry}). In particular
\begin{gather}
\p f - h_H \p_{s+\frac{5}{2} } \leq C\de_0^2,
\label{f_near_harmonic_delta}
\end{gather}

\begin{definition}
We shall call the solution $f$ of (\ref{nl_Dirichlet_mod}) constructed above
the $\psi$-harmonic extension of $h$.
\end{definition}
Notice that the $\psi$-harmonic extension depends on the choice
of $\psi$ which, in turns, depends on $\de_0$. We fix these quantities once and for all.

\begin{notation}
Recall that $H_0(\partial \Om)$ denotes the Sobolev space modulo constants,
and that $\cA_{\partial \Om}$ is the mean curvature of $\partial \Om$.
\end{notation}

\begin{lemma} If  $\de_0$ is sufficiently small,  where $\de_0$ is defined 
as above, then the operator (which depends on $\de_0$)
$-\overline{\Delta} \partial_\nu - \frac{1}{2} \cA_{\partial \Om}
\overline{\Delta}: H^{s+2}_0(\partial \Om) \subset
H^0_0(\partial \Om) \rar H^{s}_0(\partial \Om) \subset H^0_0(\partial \Om)$,
where  $s > \frac{n}{2} + 2 = \frac{3}{2} + 2$ and  $\partial_\nu$
is computed using the $\psi$-harmonic extension to $\Om$,
 is an elliptic, positive,
invertible, 
third-order pseudo-differential operator.
\label{lemma_elliptic_pseudo_2}
\end{lemma}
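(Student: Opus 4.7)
The plan is to view the operator as a lower-order, $\de_0$-small perturbation of a canonical third-order elliptic positive pseudo-differential operator built from the interior Dirichlet-to-Neumann map $\La_\Om$ of $\Om$, and then to apply a short elliptic-perturbation argument.

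First I would extract the principal linear part. By (\ref{f_near_harmonic_delta}), the $\psi$-harmonic extension $\varphi(h)$ is close to the classical harmonic extension $h_H$ of the same boundary data, with $\p \varphi(h) - h_H \p_{s+\frac{5}{2}} \leq C \de_0^2$. Taking the normal trace and using (\ref{restriction}) produces a decomposition of the form
\[
\partial_\nu \varphi(h)\big|_{\partial \Om} = \La_\Om h + R(h), \qquad \p R(h) \p_{s+1,\partial} \leq C \de_0 \, \p h \p_{s+2,\partial},
\]
where $\La_\Om$ is the classical Dirichlet-to-Neumann operator of $\Om$, and $R$ gains one derivative of smallness. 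Hence
\[
-\overline{\Delta}\partial_\nu - \tfrac{1}{2}\cA_{\partial \Om}\overline{\Delta} = \bigl(-\overline{\Delta}\La_\Om - \tfrac{1}{2}\cA_{\partial \Om}\overline{\Delta}\bigr) - \overline{\Delta} R,
\]
with the last term bounded as a map $H^{s+2}_0(\partial \Om) \to H^{s}_0(\partial \Om)$ with norm of order $\de_0$.

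Next I would analyze the principal block. $\La_\Om$ is a classical first-order self-adjoint positive pseudo-differential operator on $\partial \Om$ with principal symbol $|\xi|_{\overline{g}}$; composing with $-\overline{\Delta}$, of order two and symbol $|\xi|_{\overline{g}}^2$, yields a third-order pseudo-differential operator with principal symbol $|\xi|_{\overline{g}}^3$, hence elliptic. The term $-\tfrac{1}{2}\cA_{\partial \Om}\overline{\Delta}$ is second order with constant coefficient (because $\cA_{\partial \Om}$ is constant) and does not affect the principal symbol. By Aleksandrov's theorem $\partial \Om$ is a sphere, so $\La_\Om$ and $-\overline{\Delta}$ are simultaneously diagonalized by spherical harmonics, and both are strictly positive on $H^0_0(\partial \Om)$; their product $-\overline{\Delta}\La_\Om$ is therefore self-adjoint with spectrum bounded below by a positive constant, and adding $-\tfrac{1}{2}\cA_{\partial \Om}\overline{\Delta}$ only strengthens coercivity. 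Alternatively, and more in the spirit of the paper's remark that the sphere assumption is not needed, one notes that the commutator $[-\overline{\Delta},\La_\Om]$ is of order two, so symmetrizing the principal block produces a self-adjoint positive third-order operator up to lower-order terms, which is enough. Finally, since $\overline{\Delta} R$ has operator norm of size $\de_0$ on the Sobolev scale $H^{s+2}_0 \to H^s_0$, choosing $\de_0$ small enough relative to the coercivity constant makes the whole operator remain elliptic, positive, and invertible, and its third-order pseudo-differential nature persists because the principal symbol is unchanged.

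The main obstacle will be to justify quantitatively the two claims on $R$: first, that it has operator norm $O(\de_0)$ at the right Sobolev index; and second, that it is of strictly lower symbolic order than the principal block, so that $\overline{\Delta} R$ does not compete with $-\overline{\Delta}\La_\Om$ in the top-order symbol. Both rest on the quadratic closeness (\ref{f_near_harmonic_delta}) combined with (\ref{restriction}) and (\ref{bilinear}), but the boundary trace and the nonlinear dependence of $\varphi$ on $h$ must be handled carefully, since $\partial_\nu$ gives only $H^{s+\frac{3}{2}}(\partial \Om)$ control from the $H^{s+\frac{5}{2}}(\Om)$ regularity of $\varphi(h) - h_H$, and we need the extra derivative to ensure $\overline{\Delta} R$ is genuinely subordinate to the principal elliptic block.
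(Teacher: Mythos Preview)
Your overall strategy---reduce to the classical Dirichlet-to-Neumann case via the quadratic closeness (\ref{f_near_harmonic_delta}) and then perturb---is the same as the paper's. Where you diverge is in the invertibility step. The paper does not run a spectral or operator-norm perturbation argument; instead it argues directly: if $-\overline{\Delta}\partial_\nu f - \tfrac{1}{2}\cA_{\partial\Om}\overline{\Delta}f = 0$ then (working modulo constants) $\partial_\nu f + \tfrac{1}{2}\cA_{\partial\Om}f = 0$, and multiplying by $f$ and integrating by parts over $\Om$ gives
\[
0 = \int_\Om |\nabla f|^2 - \int_\Om f\,\psi(\|f\|_{s+2,\partial}^2)\,\cN(f) + \tfrac{1}{2}\cA_{\partial\Om}\int_{\partial\Om} f^2,
\]
after which the cubic term $\int_\Om f\,\cN(f)$ is controlled by interpolation and absorbed for $\de_0$ small, using $\cA_{\partial\Om}>0$. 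This yields triviality of the kernel, and then Fredholm gives invertibility; positivity is inherited from $-\overline{\Delta}\partial_\nu$.

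Your spherical-harmonics argument is cleaner when one is willing to lean on Aleksandrov's theorem, but it uses the sphere structure more heavily than the paper does (the paper only uses $\cA_{\partial\Om}>0$). Your commutator alternative is less convincing: passing from ``principal symbol positive and self-adjoint up to order two'' to genuine positivity on $H^0_0$ still requires an argument. More importantly, your worry in the last paragraph is real but slightly mis-diagnosed: from $\|\varphi(h)-h_H\|_{s+\frac{5}{2}}\le C\|h\|_{s+2,\partial}^2$ you only get $R(h)=\partial_\nu(\varphi(h)-h_H)\in H^{s+1}(\partial\Om)$, so $\overline{\Delta}R$ lands in $H^{s-1}$, the \emph{same} Sobolev level as the principal block; the perturbation is not of lower order, only of small size. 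That is enough for invertibility and positivity by a Neumann-series/Lipschitz argument (note $R$ is nonlinear, so you need a Lipschitz bound, not just $\|R(h)\|\lesssim\de_0\|h\|$), but your claim that ``its third-order pseudo-differential nature persists because the principal symbol is unchanged'' does not follow from a mere norm bound. The paper is also terse on this last point, simply citing (\ref{f_near_harmonic_delta}); the energy argument, by contrast, handles the nonlinear remainder $\cN(f)$ explicitly without needing to interpret $R$ symbolically.
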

\begin{proof}
If $\partial_\nu$ is computed using the harmonic extension,
then by known properties 
of the Neumann
operator \cite{TayPSO}, $-\overline{\Delta} \partial_\nu$ 
is an elliptic,  
invertible, 
third-order pseudo-differential operator.
Thus, the same holds true when the $\psi$-harmonic extension is used
because of (\ref{f_near_harmonic_delta}), and in particular 
$-\overline{\Delta} \partial_\nu  - \frac{1}{2} \cA_{\partial \Om} \overline{\Delta}$
is also a third-order pseudo-differential operator.

For what follows, recall that the constancy of $\cA_{\partial \Om}$ implies that $\partial \Om$
is a sphere 
\cite{Ale}, and therefore $\cA_{\partial \Om} > 0$. 

Suppose that 
$-\overline{\Delta} \partial_\nu f - \frac{1}{2} \overline{\Delta} \cA_{\partial \Om} f = 0$. Since
the kernel of $-\overline{\Delta}$ is zero in $H^{s+2}_0(\partial \Om)$, we have
\begin{gather}
\partial_\nu f + \frac{1}{2} \cA_{\partial \Om} f = 0.
\nonumber
\end{gather}
Using integration  by parts,
\begin{align}
\begin{split}
0=  \int_{\partial \Om} f (\partial_\nu f + \frac{1}{2} \cA_{\partial \Om} f )
= \int_\Om |\nabla f|^2 - \int_\Om f 
\psi(\p f \p_{s+2,\partial}^2 ) \cN(f)
+ \frac{1}{2} \int_{\partial \Om} \cA_{\partial \Om} f^2,
\end{split}
\label{proof_elliptic_1}
\end{align}
where we recall that $\cN$ is given by (\ref{non_linear_terms_f_extension}).
Using the Cauchy-Schwarz inequality, (\ref{bilinear}), (\ref{non_linear_terms_f_extension}), $|\psi| \leq 1$, the interpolation inequality, and
the fact that under our assumptions $s+\frac{5}{2} > 6$, we see that
\begin{gather}
\Big | \int_\Om f 
\psi(\p f \p_{s+2,\partial}^2 ) \cN(f)\Big |
\leq C \p f \p_1 \p f \p_2 \p f \p_4 \leq C \p f \p_1^{1 + \frac{6}{5} }
\p f \p_6^\frac{4}{5}.
\label{proof_elliptic_2}
\end{gather}
Also since we are working modulo constants: 
\begin{gather}
\p f \p_0 \leq C \p \nabla f \p_0.
\label{proof_elliptic_3}
\end{gather}
Combining (\ref{proof_elliptic_1}), (\ref{proof_elliptic_2}), and
(\ref{proof_elliptic_3}), we find
\begin{gather}
0 \geq \frac{\cA_{\partial \Om}}{2} \p f \p_0^2 
+ \p f \p_1^2( 1 - \p f \p_1^\frac{1}{5} \p f \p_6^\frac{4}{5} ),
\nonumber
\end{gather}
and thus we conclude that $f=0$ if $\de_0$ is very small (recall 
(\ref{elliptic_estimate_f_bry})). The invertibility result now follows from the 
Fredholm alternative, and positivity from that of $\overline{\Delta}\partial_\nu$.
\end{proof}

\begin{remark}
In what follows, we will use the fact that $\Delta_\nu^{-1} \circ \dive$ is a bounded
linear map between $H_0^s(\Om)$ and $H^{s+1}_0(\Om)$.
\end{remark}
In order to study (\ref{eq_f_bry}), we first consider  a linear
equation, with $f$-dependent coefficients, naturally associated with 
(\ref{eq_f_bry}). Since the map $f \mapsto 
(D\widetilde{\eta})^{-1} = (\id + D^2f)^{-1}$ is not linear, we consider
the following linear map connected to $(D\widetilde{\eta})^{-1}$.
For $f$ small in $H^{s+\frac{5}{2}}(\Om)$, the Sobolev embedding theorem 
tells us that $(D\widetilde{\eta})^{-1}$ is well-defined, and one can 
write
\begin{align}
\begin{split}
 (D\widetilde{\eta})^{-1} - \id
= (D\widetilde{\eta})^{-1}(\id - D\widetilde{\eta}) = - (D\widetilde{\eta})^{-1} D^2f.
\end{split}
\nonumber
\end{align}
This suggests considering the linear map 
\begin{gather}
B_f(h) = - (D\widetilde{\eta})^{-1} D^2h,
\nonumber
\end{gather}
which satisfies the estimate
\begin{gather}
\p B_f(h) \p_{s+\frac{1}{2}} \leq
C \left( 1 +  \p f \p_{s+\frac{5}{2}} \right)  
\p h \p_{s+\frac{5}{2}} \leq C\ \p h \p_{s+\frac{5}{2}},
\nonumber
\end{gather}
provided that $f$ is small. 
\begin{notation}
It is convenient to write 
$B_f(D^2h)$ for $B_f(h)$ to facilitate keeping track of the number of 
derivatives.
\end{notation}

We are thus led to the following 
linear equation for $h$:
\begin{align}
\begin{split}
 \ddot{h} &
 -\kappa \overline{\Delta} \partial_\nu h  
 -\frac{1}{2} \kappa \cA_{\partial \Om} \overline{\Delta} h
  + 
   \kappa \psQ h  +  \kappa \psq h \\
& +
\kappa  \Delta_\nu^{-1} \dive \Big[ \Big(\nabla \cH_{\widetilde{\eta}} (
 -\overline{\Delta} \partial_\nu h  
 -\frac{1}{2} \cA_{\partial \Om} \overline{\Delta} h + 
   \psQ h  \\
   & +  \psq h 
   ) \Big) (-\id + (D\widetilde{\eta})^{-1} ) \Big] 
 \\
& 
- \kappa  \Delta_\nu^{-1} \dive\Big[ D^2 f L_1^{-1} P \Big(
\Big(\nabla \cH_{\widetilde{\eta}} ( 
 -\overline{\Delta} \partial_\nu h  
 -\frac{1}{2} \cA_{\partial \Om} \overline{\Delta} h + 
   \psQ h  \\
   & +  \psq h 
      )\Big)
(D\widetilde{\eta})^{-1} \Big) \Big] \\
& + 2 \Delta_\nu^{-1} \dive\left( D_v \nabla \dot{h} \right)
- 2  \Delta_\nu^{-1} \dive\left( D^2 f L_1^{-1} P D_v \nabla \dot{h} \right) \\
& +  \Delta_\nu^{-1} \dive\left( D_{vv}^2 \nabla h\right)
-  \Delta_\nu^{-1} \dive\left( D^2 f L_1^{-1} P D_{vv}^2 \nabla h \right) \\
& +  \Delta_\nu^{-1} \dive\left( D^2 h Q(\nabla_v v) \right)
-  \Delta_\nu^{-1} \dive\left( D^2 f L_1^{-1} P D^2 h  Q(\nabla_v v ) \right) \\
& 
 +   \Delta_\nu^{-1} \dive\left(  \nabla \widetilde{q}_0 
 B_f(D^2h) \right) 
 - \Delta_\nu^{-1} \dive\left( 
D^2 f L_1^{-1} P \left( \nabla \widetilde{q}_0( B_f(D^2h) \right) \right) \\
&
= -  \Delta_\nu^{-1} \dive\left( \nabla_v v \right),
\, \text{ on } \, \partial \Om.
\end{split}
\label{eq_h_bry}
\end{align}
In (\ref{eq_h_bry}), it is still understood, as before, that 
$\widetilde{\eta} = \id + D f$. A simpler, also linear, problem
connected to  (\ref{free_boundary_full}) and 
(\ref{eq_f_bry}), was studied by the first author 
in \cite{Dis_linear}.

We shall write  (\ref{eq_h_bry}) as a first order
system. In view of   lemma 
\ref{lemma_elliptic_pseudo_2},
the operator $\cL: H_0^{s+3}(\partial \Om) \rar H_0^s(\partial \Om)$
 given by
\begin{gather}
\cL = -\overline{\Delta} \partial_\nu - \frac{1}{2} \cA_{\partial \Om}
\overline{\Delta},
\nonumber
\end{gather}
(where $\partial_\nu$ is computed using the $\psi$-harmonic extension to $\Om$)
has a square root 
$\cS: H_0^{s+\frac{3}{2}}(\partial \Om) \rar H_0^s(\partial \Om)$, i.e.,
\begin{gather}
\cS^2 = \cL.
\label{cS_defi}
\end{gather}
(see, e.g., \cite{K_Linear}).
Lemma \ref{lemma_elliptic_pseudo_2} also implies that $\cS^{-1}$ exists.

Letting $z = (\sqrt{\kappa} \cS h, \dot{h} )$ 
we will construct solutions to (\ref{eq_h_bry}) by analyzing 
the following  system for $z$:
\begin{gather}
\partial_t z + A_\kappa(t) z = \cG, \, \text{ on } \partial \Om,
\label{first_order_simple}
\end{gather}
where 
\begin{gather}
\cG \in 
C^0( [0,T], H^{s+\frac{1}{2}}_0(\partial \Om) \times H^{s+\frac{1}{2}}_0(\partial \Om) ) \cap C^1( [0,T], H^{s-1}_0(\partial \Om) \times H^{s-1}_0(\partial \Om) ),
\nonumber
\end{gather}
$T > 0$,
is given by
\begin{align}
\begin{split}
\cG = ( 0,   -  \Delta_\nu^{-1} \dive\left( \nabla_v v \right) ).
\end{split}
\label{cG_definition}
\end{align}
With 
\begin{gather}
f \in L^\infty([0,T],H^{s+2}(\partial \Om)) \cap C^0([0,T], H^{s+1}(\partial \Om)),
\nonumber
\end{gather}
and $f$ extended to $\Om$ via the $\psi$-harmonic extension, 
\begin{gather}
v \in C^0([0,T], H^s(\Om, \RR^3))\, \text{with}\, \dive(v) = 0,
\nonumber
\end{gather}
and
\begin{gather}
\widetilde{q}_0 \in 
C^0 ([0,T],H^{s+1}(\Om,\RR)),
\nonumber
\end{gather}
$s > \frac{3}{2} + 2$,
$A(t)=A_\kappa(t)$ is a one-parameter family of operators
$A(t): H_0^{s+\frac{1}{2}}(\partial \Om) \rar
H_0^{s-1}(\partial \Om)$  written as a finite sum
\begin{gather}
A = \sum_{i=0}^{12} \cZ_i,
\nonumber
\end{gather}
with the  operators $\cZ_i = \cZ_i(t)$, in turn,  given as follows.
\begin{align}
\begin{split}
\cZ_0 = 
\sqrt{\kappa}
 \left(
\begin{matrix}
0 & -1 \\
1 & 0
\end{matrix}
\right) 
\cS. 
\end{split}
\nonumber
\end{align}

\begin{align}
\begin{split}
\cZ_1 =   
\sqrt{\kappa}\left(
\begin{matrix}
0 & 0 \\
1 & 0
\end{matrix}
\right) 
\cW_1,
\end{split}
\nonumber
\end{align}
where
\begin{align}
\begin{split}
& \cW_1: H_0^{s+\frac{1}{2}}(\partial \Om) \rar H_0^{s-1}(\partial \Om), \\
& h \mapsto \left. \left( \psQ \cS^{-1} h \right) \right|_{\partial \Om} .
\end{split}
\nonumber
\end{align}

\begin{align}
\begin{split}
\cZ_2 =   
\sqrt{\kappa}\left(
\begin{matrix}
0 & 0 \\
1 & 0
\end{matrix}
\right) 
\cW_2,
\end{split}
\nonumber
\end{align}
where
\begin{align}
\begin{split}
& \cW_2: H_0^{s+\frac{1}{2}}(\partial \Om) \rar H_0^{s}(\partial \Om), \\
& h \mapsto \left. \left( \psq \cS^{-1} h \right) \right|_{\partial \Om} .
\end{split}
\nonumber
\end{align}

\begin{align}
\begin{split}
\cZ_3 =   
\sqrt{\kappa}\left(
\begin{matrix}
0 & 0 \\
1 & 0
\end{matrix}
\right) 
\cW_3,
\end{split}
\nonumber
\end{align}
where
\begin{align}
\begin{split}
& \cW_3: H_0^{s+\frac{1}{2}}(\partial \Om) \rar H_0^{s-1}(\partial \Om), \\
& h \mapsto \left. \left( 
\Delta_\nu^{-1} \dive \Big[ \Big(\nabla \cH_{\widetilde{\eta}} (
\cS h +    \psQ \cS^{-1} h   +  \psq \cS^{-1} h 
   ) \Big) (-\id + (D\widetilde{\eta})^{-1} ) \Big]
 \right) \right|_{\partial \Om} .
\end{split}
\nonumber
\end{align}

\begin{align}
\begin{split}
\cZ_4 =   
- \sqrt{\kappa}\left(
\begin{matrix}
0 & 0 \\
1 & 0
\end{matrix}
\right) 
\cW_4,
\end{split}
\nonumber
\end{align}
where
\begin{align}
\begin{split}
& \cW_4: H_0^{s+\frac{1}{2}}(\partial \Om) \rar H_0^{s-1}(\partial \Om), \\
& h \mapsto  \left. \left( 
\Delta_\nu^{-1} \dive\Big[ D^2 f L_1^{-1} P \Big(
\Big(\nabla \cH_{\widetilde{\eta}} ( 
 \cS h + 
   \psQ  \cS^{-1} h  +  \psq \cS^{-1} h 
     )\Big)
(D\widetilde{\eta})^{-1} \Big) \Big]
\right) \right|_{\partial \Om} .
\end{split}
\nonumber
\end{align}

\begin{align}
\begin{split}
\cZ_5 =  2 
\left(
\begin{matrix}
0 & 0 \\
0 & 1
\end{matrix}
\right) 
\cW_5,
\end{split}
\nonumber
\end{align}
where
\begin{align}
\begin{split}
& \cW_5: H_0^{s+\frac{1}{2}}(\partial \Om) \rar H_0^{s-\frac{1}{2}}(\partial \Om), \\
& h \mapsto \left. \left( \Delta_\nu^{-1} \dive\left( D_v \nabla h \right) \right)\right|_{\partial \Om} .
\end{split}
\nonumber
\end{align}

\begin{align}
\begin{split}
\cZ_6 =  -2 
\left(
\begin{matrix}
0 & 0 \\
0 & 1
\end{matrix}
\right) 
\cW_6,
\end{split}
\nonumber
\end{align}
where
\begin{align}
\begin{split}
& \cW_6: H_0^{s+\frac{1}{2}}(\partial \Om) \rar H_0^{s-\frac{1}{2}}(\partial \Om), \\
& h \mapsto \left(   \Delta_\nu^{-1} \dive \left. \left( D^2 f L_1^{-1} P D_v \nabla h \right) \right)\right|_{\partial \Om} .
\end{split}
\nonumber
\end{align}

\begin{align}
\begin{split}
\cZ_7 = \frac{1}{\sqrt{\kappa}} 
\left(
\begin{matrix}
0 & 0 \\
1 & 0
\end{matrix}
\right) 
\cW_7, 
\end{split}
\nonumber
\end{align}
where
\begin{align}
\begin{split}
& \cW_7: H_0^{s+\frac{1}{2}}(\partial \Om) \rar H_0^{s}(\partial \Om), \\
& h \mapsto \left. \left( \Delta_\nu^{-1} \dive\left( D^2_{vv} \nabla \cS^{-1} h \right) \right)\right|_{\partial \Om}. 
\end{split}
\nonumber
\end{align}

\begin{align}
\begin{split}
\cZ_8 = - \frac{1}{\sqrt{\kappa}} 
\left(
\begin{matrix}
0 & 0 \\
1 & 0
\end{matrix}
\right) 
\cW_8, 
\end{split}
\nonumber
\end{align}
where
\begin{align}
\begin{split}
& \cW_8: H_0^{s+\frac{1}{2}}(\partial \Om) \rar H_0^{s}(\partial \Om), \\
& h \mapsto \left. \left( \Delta_\nu^{-1} \dive\left( 
D^2f L_1^{-1} P D^2_{vv} \nabla \cS^{-1} h \right) \right)\right|_{\partial \Om}. 
\end{split}
\nonumber
\end{align}

\begin{align}
\begin{split}
\cZ_9 =  \frac{1}{\sqrt{\kappa}} 
\left(
\begin{matrix}
0 & 0 \\
1 & 0
\end{matrix}
\right) 
\cW_9 , 
\end{split}
\nonumber
\end{align}
where
\begin{align}
\begin{split}
& \cW_9: 
H_0^{s+\frac{1}{2}}(\partial \Om) \rar H_0^{s+\frac{1}{2}}(\partial \Om), \\
& h \mapsto \left. \left( \Delta_\nu^{-1} \dive\left( ( D^2 \cS^{-1} h ) Q(\nabla_v v) \right)\right)\right|_{\partial \Om} .
\end{split}
\nonumber
\end{align}

\begin{align}
\begin{split}
\cZ_{10} =  -\frac{1}{\sqrt{\kappa}} 
\left(
\begin{matrix}
0 & 0 \\
1 & 0
\end{matrix}
\right) 
\cW_{10} , 
\end{split}
\nonumber
\end{align}
where
\begin{align}
\begin{split}
& \cW_{10}: 
H_0^{s+\frac{1}{2}}(\partial \Om) \rar H_0^{s+\frac{1}{2}}(\partial \Om), \\
& h \mapsto \left. \left( \Delta_\nu^{-1} \dive\left( 
( D^2f L_1^{-1} P D^2 \cS^{-1} h ) Q(\nabla_v v) \right) \right) 
\right|_{\partial \Om} .
\end{split}
\nonumber
\end{align}

\begin{align}
\begin{split}
\cZ_{11} = 
 \frac{1}{\sqrt{\kappa}} 
\left(
\begin{matrix}
0 & 0 \\
1 & 0
\end{matrix}
\right) 
\cW_{11},
\end{split}
\nonumber
\end{align}
where
\begin{align}
\begin{split}
& \cW_{11}: H_0^{s+\frac{1}{2}}(\partial \Om) \rar H_0^{s+\frac{1}{2}}(\partial \Om), \\
& h \mapsto \left. \left( \Delta_\nu^{-1} \dive\left( \nabla \widetilde{q}_0 
B_f( D^2 \cS^{-1} h ) \right) \right)\right|_{\partial \Om}. 
\end{split}
\nonumber
\end{align}

\begin{align}
\begin{split}
\cZ_{12} = 
- \frac{1}{\sqrt{\kappa}} 
\left(
\begin{matrix}
0 & 0 \\
1 & 0
\end{matrix}
\right) 
\cW_{12},
\end{split}
\nonumber
\end{align}
where
\begin{align}
\begin{split}
& \cW_{12}: H_0^{s+\frac{1}{2}}(\partial \Om) \rar H_0^{s+\frac{1}{2}}(\partial \Om), \\
& h \mapsto \left. \left( 
\Delta_\nu^{-1} \dive\left( 
D^2 f L_1^{-1} P \left( \nabla \widetilde{q}_0( B_f(D^2 S^{-1}h) \right) \right)
\right) \right|_{\partial \Om}. 
\end{split}
\nonumber
\end{align}
In the above, we use the fact that $\dive(v) =0$ implies $Q(\nabla_v v) \in H^s(\Om)$.
In these expression, again we assume that quantities are extended
to $\Om$ via the $\psi$-harmonic extension when necessary.

\begin{notation}
We denote by $\cM \equiv \cM_{R,\ell,T,s+ 2}$ the 
set of functions $f: [0,T] \rar H_0^{s+ 2}(\partial \Om,\RR)$ such that
\begin{gather}
\p f(t) \p_{s+ 2,\partial} \leq R, \,
\text{ and }
\p f(t) - f(t^\prime) \p_{s+\frac{1}{2},\partial} \leq \ell |t - t^\prime|,
\nonumber
\end{gather}
$0 \leq t, t^\prime \leq T$, where $s > \frac{n}{2} + 2$.
\label{cM_notation}
\end{notation}

\begin{remark}
For future reference, we note that $f \in C^0([0,T], H^{s+1}(\partial \Om))$
if $f \in \cM$. Indeed, an application of the interpolation inequality gives
\begin{align}
\begin{split}
\p f(t) - f(t^\prime) \p_{s+1,\partial}&
 \leq \p f(t) - f(t^\prime) \p^\frac{2}{3}_{s+\frac{1}{2},\partial}
 \p f(t) - f(t^\prime) \p^\frac{1}{3}_{s+2,\partial} \\
 & \leq (2\ell^2 R)^\frac{1}{3} |t-t^\prime|^\frac{2}{3}.
\end{split}
\nonumber
\end{align}
\label{remark_continuity_future}
\end{remark}

\begin{remark}
In the proofs below, we make extensive use of  (\ref{bilinear}) to 
estimate the several products involved. We do not mention the application of
(\ref{bilinear})
at every step in order to avoid repetition.
\end{remark}

\begin{notation}
In the ensuing estimates, it will be important to keep track of the 
dependence of several constants on the constants
 $K_0$, $K_0^\prime$ and $K_3$ that appear 
in the hypotheses of the statements. This is done by writing
$C=C(K_0)$ and similar expressions. The dependence
on fixed quantities such as $s$, $\Om$ etc, however, will not be
indicated, nor will be the dependence on $\widetilde{q}_0$,
$\dot{\widetilde{q}}_0$, $v$, and $\dot{v}$, which are fixed throughout
the theorems of this section.
\label{notation_constants}
\end{notation}

\begin{proposition}
Assume that 
\begin{gather}
v \in C^0\left([0,T], H^s(\Om, \RR^3)\right), \, \dive(v) = 0,
\nonumber
\end{gather}
and
\begin{gather}
\widetilde{q}_0 \in 
C^0 \left([0,T],H^{s+1}(\Om,\RR)\right),
\nonumber
\end{gather}
$s > \frac{3}{2} + 2$, $T >0$. Let $f \in \cM$ satisfy
\begin{gather}
\p f \p_{s+2,\partial} \leq \frac{K_0}{\sqrt{\kappa}},
\nonumber
\end{gather}
for some constant $K_0$, $0 \leq t \leq T$, and assume that
 $f$ is extended
to $\Om$ via its $\psi$-harmonic extension.
Finally, let
\begin{gather}
\cG \in 
C^0( [0,T], H^{s+\frac{1}{2}}_0(\partial \Om) \times H^{s+\frac{1}{2}}_0(\partial \Om) ) \cap C^1( [0,T], H^{s-1}_0(\partial \Om) \times H^{s-1}_0(\partial \Om) ),
\nonumber
\end{gather}
and $z_0 \in H_0^{s+\frac{1}{2}}(\partial \Om, \RR) \times
H_0^{s+\frac{1}{2}}(\partial \Om, \RR)$ be given.
Then, if $\kappa$ is sufficiently large, there exists
a unique solution
\begin{gather}
z \in C^0([0,T], H_0^{s+\frac{1}{2}}(\partial \Om, \RR)) \cap 
C^1([0,T], H_0^{s-1}(\partial \Om, \RR))
\nonumber
\end{gather}
of (\ref{first_order_simple}), satisfying $z(0) = z_0$. (we note  that the operators
in equation (\ref{first_order_simple}) are constructed with the help of the 
$\psi$-harmonic extension.)
\label{prop_first_order}
\end{proposition}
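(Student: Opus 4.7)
The plan is to cast (\ref{first_order_simple}) in the form of the Kato abstract evolution framework (theorems \ref{theorem_generator_semi_group}--\ref{theorem_abstract_ODE}) with
\[
X = H^{s-1}_0(\partial\Om,\RR)\times H^{s-1}_0(\partial\Om,\RR),\qquad
Y = H^{s+1/2}_0(\partial\Om,\RR)\times H^{s+1/2}_0(\partial\Om,\RR),
\]
so that $Y$ is densely and continuously embedded in $X$ and, because $\cS$ is of order $3/2$ by lemma \ref{lemma_elliptic_pseudo_2}, $Y$ is the natural domain on which the principal part $\cZ_0=\sqrt{\kappa}\bigl(\begin{smallmatrix}0&-1\\1&0\end{smallmatrix}\bigr)\cS$ acts boundedly into $X$. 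The reason for this choice is that the conclusion asks for a solution in $C^0([0,T],Y)\cap C^1([0,T],X)$, which is precisely the regularity produced by theorem \ref{theorem_abstract_ODE} once the evolution operator is built.

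The crucial observation is that $\cZ_0$ is skew-adjoint on $X$ when $X$ is given the $\cS$-functional-calculus inner product
\[
\langle u,v\rangle_X \;=\; \langle\cL^{(s-1)/3} u_1,\cL^{(s-1)/3} v_1\rangle_0 + \langle\cL^{(s-1)/3} u_2,\cL^{(s-1)/3} v_2\rangle_0,
\]
which is equivalent to the usual $H^{s-1}$ norm by the positivity and invertibility of $\cL=\cS^2$. A one-line computation using self-adjointness of $\cS$ and skewness of the constant matrix gives $\langle\cZ_0 u,v\rangle_X=-\langle u,\cZ_0 v\rangle_X$, so by Stone's theorem (equivalently by theorem \ref{theorem_generator_semi_group}), $\cZ_0$ generates a unitary group on $X$. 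I would then treat $\cZ_1,\ldots,\cZ_{12}$ as relatively small perturbations: using the stated mapping property of each $\cW_i$, together with the $\psQ,\psq$ estimates recorded right after (\ref{eq_f_bry}), the assumed smallness $\|f(t)\|_{s+2,\partial}\leq K_0/\sqrt{\kappa}$, and the $H^s$-boundedness of $v$ and $H^{s+1}$-boundedness of $\widetilde q_0$, each $\cZ_i$ ($i\geq 1$) satisfies $\|\cZ_i y\|_X\leq C\|y\|_Y$ with $C$ \emph{independent of $\kappa$}, whereas $\|\cZ_0 y\|_X\sim\sqrt{\kappa}\,\|y\|_Y$. Consequently the relative bound is $C/\sqrt{\kappa}\ll 1$ for large $\kappa$, and Kato--Rellich (the perturbation clause of theorem \ref{theorem_generator_semi_group}) gives that $-A_\kappa(t)$ generates a $C^0$ semigroup on $X$ with $\|e^{-\tau A_\kappa(t)}\|_X\leq c_1 e^{c_2\tau}$ uniformly in $t\in[0,T]$, which is hypothesis (i) of theorem \ref{theorem_evolution_operator}.

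For hypothesis (ii) I would build a $t$-dependent inner product on $Y$ via the same recipe with exponent shifted to $(s+1/2)/3$ and with $\cL$ replaced by the operator $\cL_t$ attached to the $\psi$-harmonic extension of $f(t)$; since $f\in\cM$ is Lipschitz in $t$ into $H^{s+1/2}(\partial\Om)$ by remark \ref{remark_continuity_future}, and the $\psi$-harmonic extension is $C^1$ in its boundary data (proposition \ref{embedding}), these $Y$-norms are equivalent uniformly in $t$ and satisfy the Lipschitz comparison required in (ii). Commutator estimates between $\cL_t$ and each $\cZ_i$, again exploiting $K_0/\sqrt{\kappa}$, give the quasi-contractive bound on $Y$. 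Hypothesis (iii), continuity of $t\mapsto A_\kappa(t)\in B(Y,X)$, is a direct consequence of the continuity of $f,v,\widetilde q_0$ in their respective spaces; and hypothesis (iv), reversibility, is inherited from the skew-adjointness of the principal part (the perturbations are of the same form for $\widehat A(t)=-A(T-t)$). With (i)--(iv) in hand, theorem \ref{theorem_evolution_operator} produces the evolution operator $U(t,\tau)$ and theorem \ref{theorem_abstract_ODE}, applied with $y_0=z_0$ and $e(\tau)=\cG(\tau)$, yields the unique $z(t)=U(t,0)z_0+\int_0^t U(t,\tau)\cG(\tau)\,d\tau$ in the claimed class.

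The hardest step will be hypothesis (ii): constructing the family of $t$-dependent inner products on $Y$ that simultaneously (a) give the $H^{s+1/2}$ topology uniformly in $t$, (b) depend Lipschitz-continuously on $t$, and (c) make $-A_\kappa(t)$ quasi-contractive on $Y$. This requires careful commutator estimates between the $f$-dependent operator $\cS$ and the twelve operators $\cZ_1,\ldots,\cZ_{12}$, many of which mix interior and boundary quantities through $\cH_{\widetilde\eta}$ and $\Delta_\nu^{-1}\dive$. It is precisely these commutators that pin down how large $\kappa$ must be taken, via the hypothesis $\|f\|_{s+2,\partial}\leq K_0/\sqrt{\kappa}$.
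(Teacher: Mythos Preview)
Your approach is essentially correct and follows the same Kato-theory template as the paper, but with two noteworthy differences.

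\textbf{Choice of base space.} The paper takes $X = H^0_0(\partial\Om)\times H^0_0(\partial\Om)$ rather than your $X = H^{s-1}_0\times H^{s-1}_0$, with domain $H^{3/2}_0\times H^{3/2}_0$ for $A(t)$. In $L^2$ the skew-symmetry of $\cZ_0$ is immediate from self-adjointness of $\cS$, so no functional-calculus inner product is needed on $X$. The price is that Kato's theorem then yields only $z\in C^1([0,T],L^2)$, and the paper closes with a short extra argument upgrading $t\mapsto A(t)z(t)$ to continuity in $H^{s-1}$, using $z\in C^0([0,T],Y)$ and the $H^{s+1}$-continuity of $f$ (remark~\ref{remark_continuity_future}). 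Your choice of $X=H^{s-1}$ would deliver the stated $C^1$ regularity directly, at the cost of the $t$-dependent equivalent norms you describe. For hypothesis~(ii) the paper uses $\Lambda=(\widehat A(t))^{(2s+1)/3}$ as the isomorphism $Y\to X$, which is the same idea as your $\cL_t^{(s+1/2)/3}$.

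\textbf{How the resolvent bound is obtained.} Here there is a genuine (minor) gap in your write-up. You invoke ``Kato--Rellich (the perturbation clause of theorem~\ref{theorem_generator_semi_group})'', but that clause applies only to \emph{bounded} perturbations $Z'$. The operators $\cZ_1,\cZ_3,\cZ_4$ are of the same order $3/2$ as $\cZ_0$ and are not bounded on $X$; they are only $\cZ_0$-bounded with small relative bound. A relatively-bounded perturbation theorem for generators would do the job, but it is not the theorem you cite. The paper avoids this issue entirely: it rescales to $\widehat A=\kappa^{-1/2}A$, writes $w=(\widehat A+\lambda)z$, and proves the resolvent estimate $\|(\widehat A+\lambda)^{-1}\|\leq C(K_0)/|\lambda|$ by a direct two-by-two computation exploiting skew-symmetry of $\widehat\cZ_0$ and the smallness $\|\cE z\|_0\leq C(K_0)\varepsilon\|z\|_{3/2}$ of $\cE=\widehat A-\widehat\cZ_0$, with $\varepsilon$ small for large $\kappa$. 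This hands-on route is what makes theorem~\ref{theorem_generator_semi_group} applicable as stated.
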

\begin{proof}
Since $f$ is defined on $\Om$ by its $\psi$-harmonic extension,
we find that (using (\ref{elliptic_estimate_f_bry}))
\begin{gather}
\p f \p_{r+\frac{1}{2}} \leq C \p f \p_{r,\partial},
\nonumber
\end{gather}
and thus
\begin{gather}
\p f(t) \p_{s+ \frac{5}{2}} \leq \frac{K_0}{\sqrt{\kappa}}.
\nonumber
\end{gather}
We consider 
(\ref{first_order_simple}) as an abstract evolution equation in 
$X = H^0_0(\partial \Om) \times H^0_0(\partial \Om)$ for the unknown $z$.
Our goal is to verify that the operator $-A(t)$ satisfies the conditions
of theorem \ref{theorem_evolution_operator}, and then to apply theorem
\ref{theorem_abstract_ODE}. For this,
we take $Y$ in that theorem as $Y = H_0^{s+\frac{1}{2}}(\partial \Om) \times H^{s+\frac{1}{2}}_0(\partial \Om)$, and let $H^\frac{3}{2}_0(\partial \Om) \times
H_0^\frac{3}{2}(\partial \Om)$ be the domain of $A(t)$.
Denote
\begin{gather}
\widehat{A}_\kappa(t) \equiv \widehat{A}(t) \equiv  \widehat{A} = 
\frac{1}{\sqrt{\kappa}} A_\kappa(t).
\nonumber
\end{gather}
As $f$ is small in $H^{s+\frac{5}{2}}(\Om)$ if $\kappa$ is large,
the Sobolev embedding theorem guarantees 
that $(D\widetilde{\eta})^{-1}= (\id + D^2f)^{-1}$ and $L_1^{-1}$ are
well-defined.
Using 
Taylor's theorem with integral remainder to estimate
$(\id + D^2 f)^{-1}$, similarly to what was done in
the calculations of section  \ref{section_local_coord}, we obtain
\begin{gather}
\p (D\widetilde{\eta})^{-1} \p_{r-1} \leq C (1+ \p f \p_{r+1}),
\label{D_eta_inv_expansion_1}
\end{gather}
$1 \leq r \leq s$. 
In the above, and in what follows, we assume that $\kappa$ is sufficiently large to allow us to bound  powers of norms of $f$ by terms linear (in the norms of) $f$.
A similar application of Taylor's theorem also yields
\begin{gather}
\p -\id + (D\widetilde{\eta})^{-1} \p_{r-1} \leq C \p f \p_{r+1},
\label{D_eta_inv_expansion_2}
\end{gather}
$1\leq r \leq s$.
From (\ref{psQ_linear}), (\ref{psq_linear}), and
lemma \ref{lemma_elliptic_pseudo_2}, we obtain the estimates
\begin{gather}
\p \cW_1 h \p_{r-1,\partial} =
\p \psQ \cS^{-1} h \p_{r-1,\partial} \leq C \p f \p_{r+\frac{3}{2}}
\p h \p_{r+\frac{1}{2},\partial},
\label{quasi_linear_estimates_psQ}
\end{gather}
and
\begin{gather}
\p \cW_2 h \p_{r-1,\partial} = \p \psq \cS^{-1} h \p_{r-1,\partial} \leq C \p f 
\p_{r+\frac{3}{2}}
\p h \p_{r-\frac{1}{2},\partial},
\label{quasi_linear_estimates_psq}
\end{gather}
$1\leq r \leq s$.

We claim that if $g \in H^{s-1}(\partial \Om)$, then 
$\cH_{\widetilde{\eta}}(g) \in H^{s-\frac{1}{2}}(\Om)$. To see this, notice that
 $\widetilde{G} = \cH_{\widetilde{\eta}}(g)$ means $\widetilde{G} = G \circ \widetilde{\eta}$,
 where $G$ solves 
\begin{gather}
\begin{cases}
\Delta  G = 0, & \text{ in } \widetilde{\eta}(\Om), \\
G  = g \circ \widetilde{\eta}^{-1}, & \text{ on } \partial \widetilde{\eta} (\Om).
\end{cases}
\nonumber
\end{gather}
But using notation \ref{notation_sub}, this can be rewritten as
\begin{gather}
\begin{cases}
\Delta_{\widetilde{\eta}}  \widetilde{G} = 0, & \text{ in } \Om, \\
\widetilde{G}  = g , & \text{ on } \partial \Om.
\end{cases}
\nonumber
\end{gather}
If $\widetilde{G}$ is defined over $\Om$, we compute, 
\begin{gather}
\Delta_{\widetilde{\eta}} \widetilde{G}
= ((D \widetilde{\eta} )^{-1})^\al_\be ( \partial_{\al \ga} \widetilde{G}  ) 
((D \widetilde{\eta} )^{-1})^{\be \ga} 
+ 
\partial_\al 
((D \widetilde{\eta} )^{-1})^{\al\be} \partial_\be \widetilde{G}, 
\nonumber
\end{gather}
where $((D \widetilde{\eta} )^{-1})^\al_\be$ are the entries of $(D \widetilde{\eta} )^{-1}$.
Thus, $\Delta_{\widetilde{\eta}}$ has the form
\begin{gather}
\Delta_{\widetilde{\eta}} G = a^{\al\be}\partial_{\al\be} G + b^\al \partial_\al G.
\nonumber
\end{gather}
We see that that 
$a^{\al\be}$ is in $H^{s+\frac{1}{2}}(\Om,\RR^{3^2})$ and
$b^\al$ is in $H^{s-\frac{1}{2}}(\Om, \RR^3)$ since
$\nabla f \in H^{s+\frac{3}{2}}(\Om)$.
Furthermore, 
as $\nabla f$ is small in $H^{s+\frac{3}{2}}(\Om)$ and $s> \frac{3}{2} + 2$, 
we know that $(a^{\al\be})$ is positive definite.
Thus,  $\Delta_{\widetilde{\eta}}$ is an elliptic operator,
and the solution $\widetilde{G}$ will be in 
 $H^{s-\frac{1}{2}}(\Om)$ if $g$ is in $H^{s-1}(\partial \Om)$. Moreover,
 we have the estimate
\begin{gather}
\p \cH_{\widetilde{\eta}}(g) \p_{s-\frac{1}{2}} \leq C(f) \p g \p_{s-1,\partial},
\nonumber
\end{gather}
where the constant $C(f)$ depends on $\p \nabla f \p_{s+\frac{3}{2}}$.

Combining the above properties of $\cH_{\widetilde{\eta}}$ with 
(\ref{D_eta_inv_expansion_2}), 
(\ref{quasi_linear_estimates_psQ}), and (\ref{quasi_linear_estimates_psq}), 
it follows that
\begin{align}
\begin{split}
& \p \cW_3 h \p_{r-1,\partial} 
\\
&= 
\p 
\Delta_\nu^{-1} \dive \Big[ \Big(\nabla \cH_{\widetilde{\eta}} (
\cS h +    \psQ \cS^{-1} h   +  \psq \cS^{-1} h 
   ) \Big) (-\id + (D\widetilde{\eta})^{-1} ) \Big]
 \p_{r-1,\partial} 
\\
&  \leq C \p f \p_{r+\frac{3}{2}}
\p h \p_{r+\frac{1}{2},\partial}.
\end{split}
\label{quasi_linear_estimates_lower_1}
\end{align}
$1\leq r \leq s$.
Similarly,
(\ref{D_eta_inv_expansion_1}), 
(\ref{quasi_linear_estimates_psQ}), and (\ref{quasi_linear_estimates_psq}) give
\begin{align}
\begin{split}
& \p 
\Delta_\nu^{-1} \dive \Big[ D^2f L_1^{-1}P  \Big(\nabla \cH_{\widetilde{\eta}} (
\cS h +    \psQ \cS^{-1} h   +  \psq \cS^{-1} h 
   ) \Big) ( D\widetilde{\eta} )^{-1} \Big]
 \p_{r-1,\partial} 
\\
&  \leq C \p f \p_{r+\frac{3}{2}}
\p h \p_{r+\frac{1}{2},\partial},
\end{split}
\nonumber
\end{align}
so that
\begin{align}
\begin{split}
 \p \cW_4 h \p_{r-1,\partial} 
\leq C \p f \p_{r+\frac{3}{2}}
\p h \p_{r+\frac{1}{2},\partial},
\end{split}
\label{quasi_linear_estimates_lower_2}
\end{align}
if $1\leq r \leq s$.

If $\kappa$ sufficiently large, (\ref{D_eta_inv_expansion_1}),
(\ref{quasi_linear_estimates_psQ}), (\ref{quasi_linear_estimates_psq}),
(\ref{quasi_linear_estimates_lower_1}),  (\ref{quasi_linear_estimates_lower_2}),
and the form of $A$, 
imply that $\widehat{A}$ is arbitrarily close to
\begin{gather}
\widehat{\cZ}_0 = \frac{1}{\sqrt{\kappa}} \cZ_0
\nonumber
\end{gather}
in $B(H^r_0(\partial \Om) \times H^r_0(\partial \Om), X)$, with $\frac{3}{2} \leq r \leq s + \frac{1}{2}$; thus in particular in 
$B(Y,X)$ and in $B( H^\frac{3}{2}(\partial \Om) \times
H^\frac{3}{2}(\partial \Om), X)$.  Indeed, 
(\ref{quasi_linear_estimates_psQ}), (\ref{quasi_linear_estimates_psq}),
(\ref{quasi_linear_estimates_lower_1}), and
 (\ref{quasi_linear_estimates_lower_2}) show that
 \begin{gather}
  \frac{1}{\sqrt{\kappa}} \sum_{i=0}^4 \cW_i
 \nonumber
 \end{gather}
is close to $\widehat{\cZ}_0$, and the remaining terms in $\widehat{A}$
are lower order operators multiplied by $\frac{1}{\sqrt{\kappa}}$.

Now we are ready to verify the several hypotheses of theorem \ref{theorem_evolution_operator}. We start by first showing that
theorem \ref{theorem_generator_semi_group} can be applied.
By the operator $A$, we in fact mean its maximal operator, so 
that $A$ is in fact closed in $X$. We shall not, however, make notational
distinctions between $A$ and its extension (see \cite{K_1} for details). 

We start by showing that $\widehat{A}$ generates a semi-group.
Notice that $\widehat{\cZ}_0 + \la$, $\la \in \RR$, is invertible (with bounded
inverse), and so is $\widehat{A} + \la$. Write
\begin{gather}
\widehat{A} = (\widehat{A} - \widehat{\cZ}_0 ) + \widehat{\cZ}_0 
\equiv
\cE + \widehat{\cZ}_0.
\label{cE_definition}
\end{gather}
From the above estimates, we know that 
\begin{gather}
\p \cE z \p_{0,\partial} \leq C(K_0) \varepsilon \p z \p_{\frac{3}{2},\partial},
\label{cE_small}
\end{gather}
where $\varepsilon$ can be as small as we want provided that $\kappa$
is sufficiently large. Set 
\begin{gather}
w = (\widehat{A} + \la ) z,
\label{w_definition}
\end{gather}
and 
use the fact that $\widehat{\cZ}_0$ is skew-symmetric to obtain
\begin{align}
\begin{split}
(w, z)_{0,\partial} & = (\cE z, z)_{0,\partial} 
+ |\la| \p z \p_{0,\partial}^2, 
\end{split}
\nonumber
\end{align}
from which we get (using the Cauchy-Schwarz inequality and (\ref{cE_small}))
\begin{align}
\begin{split}
 |\la| \p z \p_{0,\partial}^2 & \leq
 \p z \p_{0,\partial} \p w \p_{0,\partial}
 +  \p \cE z \p_{0,\partial} \p  z \p_{0,\partial} \\
& \leq
 \p z \p_{0,\partial} \p w \p_{0,\partial}
 + C(K_0) \varepsilon \p z \p_{\frac{3}{2},\partial} \p  z \p_{0,\partial}.
\end{split}
\label{proof_semigroup_1}
\end{align}
Next, we show that
\begin{gather}
\p z \p_{\frac{3}{2},\partial} \leq C (|\la| \p z \p_{0,\partial} +
\p w \p_{0, \partial} ).
\label{proof_semigroup_2}
\end{gather}
Recalling the definition of $\widehat{\cZ}_0$, (\ref{cE_definition}) and (\ref{w_definition}), we
can write $w$ more explicitly as
\begin{gather}
\begin{pmatrix}
w_1 \\
w_2
\end{pmatrix}
= 
\begin{pmatrix}
\la & -\cS  \\
\cS + \cE_{21} & \la + \cE_{22}
\end{pmatrix}
\begin{pmatrix}
z_1 \\
z_2
\end{pmatrix}
.
\label{w_z_explicit}
\end{gather}
From the first row of (\ref{w_z_explicit}),
\begin{gather}
z_2 = \la \cS^{-1} z_1 - \cS^{-1} w_1 
\nonumber
\end{gather}
which implies
\begin{gather}
\p z_2 \p_{\frac{3}{2},\partial} \leq
+ C (|\la| \p z_1 \p_{0,\partial} + \p w_1 \p_{0,\partial} ).
\label{proof_semigroup_2_a}
\end{gather}
The second row of (\ref{w_z_explicit}) gives
\begin{gather}
z_1 = -\cS^{-1} \cE_{21} z_1 - \la \cS^{-1} z_2 - \cS^{-1} \cE_{22} z_2
+ \cS^{-1} w_2,
\nonumber
\end{gather}
which gives (with the help of (\ref{cE_small}) and (\ref{proof_semigroup_2_a})),
\begin{gather}
\p z_1 \p_{\frac{3}{2},\partial} \leq  C(K_0)\varepsilon
\p z_1 \p_{\frac{3}{2},\partial} + C |\la| \p z_2 \p_{0,\partial}
+ C (|\la| \p z_1 \p_{0,\partial} + \p w_1 \p_{0,\partial}  
+ \p w_2 \p_{0,\partial}).
\label{proof_semigroup_2_b}
\end{gather}
The term $ C(K_0) \varepsilon \p z_1 \p_{\frac{3}{2},\partial}$ can  be absorbed 
on the left-hand side by making $\varepsilon$ small, and then
(\ref{proof_semigroup_2_a}) and (\ref{proof_semigroup_2_b}) combine 
to give
(\ref{proof_semigroup_2}). 

From (\ref{proof_semigroup_1}) and (\ref{proof_semigroup_2}) it now follows that
\begin{align}
\begin{split}
 |\la| \p z \p_{0,\partial}^2 
& \leq
   C(K_0) (1 + \varepsilon) \p z \p_{0,\partial} \p w \p_{0,\partial}
 + C(K_0) \varepsilon|\la|  \p  z \p_{0,\partial}^2 .
\end{split} 
\nonumber
\end{align}
Thus, if $\varepsilon$ is sufficiently small, we find
$\p z \p_{0,\partial} \leq \frac{C(K_0)}{|\la|} \p w \p_{0,\partial}$, or, 
since $z = (\widehat{A} + \la)^{-1} w$,
\begin{gather}
\p (\widehat{A} + \la )^{-1} w \p_{0,\partial} \leq\frac{C(K_0)}{|\la|} 
\p w \p_{0,\partial}.
\label{A_la_inverse_bound}
\end{gather}
From theorem \ref{theorem_generator_semi_group}, we conclude that the family
$\widehat{A} = \widehat{A}(t)$ generates a continuous semi-group, and that 
(i) of theorem \ref{theorem_evolution_operator} is satisfied.

We now consider (ii) of theorem \ref{theorem_evolution_operator}.
As shown above, $\widehat{A}$ is close to $\widehat{\cZ}_0$ if $\kappa$
is large. Thus, $\La = (\widehat{A}(t))^{\frac{2s+1}{3}}$ gives an isomorphism
from $Y$ to $X$, and $\La \widehat{A}(t) \La^{-1}$ generates a semi-group on 
$X$. This implies, by the results of \cite{K_1}, that $e^{\tau \widehat{A}(t)}$
restricts to a semi-group on $Y$. Moreover, setting
\begin{gather}
\prescript{}{t}{(z_1,z_2)} = 
( (\widehat{A}(t))^{\frac{2s+1}{3}} z_1,
(\widehat{A}(t))^{\frac{2s+1}{3}} z_2 )_{0,\partial},
\nonumber
\end{gather}
one obtains an inner product that generates the topology of $Y$. Then,
an argument similar to the one leading to (\ref{A_la_inverse_bound}) gives
\begin{gather}
\p (\widehat{A} + \la )^{-1} \p_{\operatorname{Op}(t)} \leq\frac{C(K_0)}{|\la|}.
\nonumber
\end{gather}
From this, the remaining conditions of (ii) follow. 
From the continuity of $t \mapsto f(t)$ in the $H^{s+1}(\partial \Omega)$ norm
(remark \ref{remark_continuity_future}), 
the assumptions on $v$ and $\widetilde{q}_0$ and 
and the form of $A$, we obtain the continuity of $t\mapsto \widehat{A}(t)$
in $B(Y,X)$.
The remaining hypotheses
of theorem \ref{theorem_evolution_operator} are routinely checked,
and therefore we obtain the desired evolution operator for
$\widehat{A}$, and hence for $A$ as well.

Invoking theorem \ref{theorem_abstract_ODE} and given 
an initial condition, we obtain a solution to (\ref{first_order_simple}), namely
\begin{gather}
z \in C^0([0,T],Y) \cap C^1([0,T], X)
\nonumber
\end{gather}
 It remains to verify that 
$t \mapsto A(t)z(t)$ is continuous with respect to the
$H^{s-1}$ norm. Because $z \in C^0([0,T],Y)$, we have
$\left. D^3 \cS^{-1} z \right|_{\partial \Om} \in C^0([0,T], H_0^{s-1}(\partial\Om) 
\times H_0^{s-1}(\partial \Om))$. The result now follows since
the coefficients of $A$ depend on at most two derivatives of $f$, and are
therefore continuous in the $H^{s-1}(\partial \Om)$ norm because 
$f \in \cM$ (see again  remark \ref{remark_continuity_future}).
\end{proof}

\begin{proposition}
In proposition \ref{prop_first_order}, assume further that
\begin{gather}
v \in C^0\left([0,T], H^s(\Om, \RR^3)\right) \cap C^1([0,T], H^{s-\frac{3}{2}}(\Om, \RR^3)), 
\nonumber
\\
\widetilde{q}_0 \in C^0 \left([0,T],H^{s+1}(\Om,\RR)\right) \cap C^1 ([0,T],H^{s-\frac{1}{2}}(\Om,\RR)),
\nonumber
\end{gather}
and
\begin{gather}
f \in C^0([0,T], H_0^{s+2}(\partial \Om, \RR)) \cap 
C^1([0,T], H_0^{s+\frac{1}{2}}( \partial \Om, \RR))
\cap 
C^2([0,T], H_0^{s-1}( \partial \Om, \RR))
\nonumber
\end{gather}
satisfies
\begin{gather}
\p \dot{f} \p_{s+\frac{1}{2},\partial} \leq K_0^\prime,
\nonumber
\end{gather}
for some constant $K_0^\prime$. Then the solution $z$ satisfies the estimate
\begin{align}
\begin{split}
\p z(t) \p_{s+\frac{1}{2},\partial} & \leq 
K_1 \p z(0) \p_{s+\frac{1}{2},\partial} + 
\sup_{0\leq \tau \leq t} \frac{K_1(1+t)}{\sqrt{\kappa}} \p \cG(\tau) \p_{s-1,\partial}
\\
&+ 
\sup_{0\leq \tau \leq t} \frac{K_1(1+t)}{\sqrt{\kappa}} \p \partial_\tau\cG(\tau) \p_{s-1,\partial},
\end{split}
\label{z_estimate}
\end{align}
for a constant $K_1$ given by 
$K_1 = \sup_{0\leq \tau \leq T} \cP(\tau)$, with $\cP$ a 
continuous function of
\begin{gather}
\p v(\tau) \p_s, \p \dot{v} (\tau) \p_{s-\frac{3}{2}},
\p \widetilde{q}_0 (\tau) \p_{s+1}, 
\p \dot{\widetilde{q}}_0 (\tau) \p_{s-\frac{1}{2}},
T, K_0, \text{ and }  K_0^\prime.
\nonumber
\end{gather}
\label{prop_first_order_estimate}
\end{proposition}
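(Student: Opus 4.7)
The strategy is to combine the Duhamel representation from Theorem \ref{theorem_abstract_ODE} with a single integration by parts that converts the factor $\sqrt{\kappa}$ hidden in the principal part of $A$ into the desired $\kappa^{-1/2}$ gain. Writing (\ref{first_order_simple}) as $\dot{z} = -A z + \cG$, so that $Z(\tau) = -A(\tau)$ in the notation of Theorems \ref{theorem_evolution_operator}--\ref{theorem_abstract_ODE}, the Duhamel formula from Theorem \ref{theorem_abstract_ODE} reads
\begin{gather}
z(t) = U(t,0)\, z(0) + \int_0^t U(t,\tau)\, \cG(\tau)\, d\tau,
\nonumber
\end{gather}
and the operator-norm bound of Theorem \ref{theorem_evolution_operator}(e) yields $\p U(t,0) z(0) \p_{s+\frac{1}{2},\partial} \leq K_1 \p z(0) \p_{s+\frac{1}{2},\partial}$. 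The constant $K_1$ is obtained by tracking how the quantities $\alpha,\beta,\gamma,\delta,\mu,\nu$ of Theorem \ref{theorem_evolution_operator} emerged from the proof of Proposition \ref{prop_first_order}---they were controlled in terms of $\p f \p_{s+2,\partial}$, $\p v \p_s$, and $\p \widetilde{q}_0 \p_{s+1}$---together with the additional quantities $K_0'$, $\p \dot{v} \p_{s-\frac{3}{2}}$, and $\p \dot{\widetilde{q}}_0 \p_{s-\frac{1}{2}}$ now needed to verify the time-continuity hypotheses of Theorem \ref{theorem_evolution_operator} in the present stronger setting.

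To extract the $\kappa^{-1/2}$ factor from the inhomogeneous term, I would use property (d) of Theorem \ref{theorem_evolution_operator}, which with $Z=-A$ reads $\partial_\tau U(t,\tau) = U(t,\tau) A(\tau)$. Setting $\chi(\tau) = A(\tau)^{-1} \cG(\tau)$ and integrating by parts,
\begin{gather}
\int_0^t U(t,\tau)\, \cG(\tau)\, d\tau = \chi(t) - U(t,0) \chi(0) - \int_0^t U(t,\tau)\, \dot{\chi}(\tau)\, d\tau.
\nonumber
\end{gather}
The key claim is that $A(\tau)^{-1}$ maps $H^{s-1}_0(\partial \Om) \times H^{s-1}_0(\partial \Om)$ boundedly into $Y = H^{s+\frac{1}{2}}_0(\partial \Om) \times H^{s+\frac{1}{2}}_0(\partial \Om)$ with operator norm $O(\kappa^{-1/2})$ uniformly in $\tau \in [0,T]$. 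To prove this, I would split $A = \cZ_0 + (A - \cZ_0)$: the principal part $\cZ_0$ is $\sqrt{\kappa}$ times a skew matrix composed with $\cS$, and $\cS^{-1}$ exists and gains $\tfrac{3}{2}$ derivatives by Lemma \ref{lemma_elliptic_pseudo_2}, so $\cZ_0^{-1}$ carries an explicit $\kappa^{-1/2}$ factor and maps $H^{s-1}_0$ to $H^{s+\frac{1}{2}}_0$; the perturbation $A - \cZ_0$ is small relative to $\cZ_0$ by (\ref{cE_small}) and (\ref{quasi_linear_estimates_psQ})--(\ref{quasi_linear_estimates_lower_2}), so a Neumann series for $A^{-1}$ converges once $\kappa$ is large. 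The two boundary terms $\chi(t)$ and $U(t,0)\chi(0)$ therefore contribute $K_1 \kappa^{-1/2} \p \cG \p_{s-1,\partial}$ to the final estimate.

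For the remaining integral, I would differentiate $\dot{\chi} = -A^{-1} \dot{A}\, A^{-1} \cG + A^{-1} \dot{\cG}$. The term $A^{-1} \dot{\cG}$ immediately produces the $\kappa^{-1/2} \p \partial_\tau \cG \p_{s-1,\partial}$ contribution. For the commutator-type term, the crucial observation is that the principal part $\cZ_0$ of $A$ is $\tau$-independent, so $\dot{A}$ receives contributions only from the lower-order operators $\cZ_1,\dots,\cZ_{12}$; those carrying a $\sqrt{\kappa}$ prefactor ($\cZ_1,\dots,\cZ_4$) depend on $\tau$ only through $f(\tau)$, with $\dot{f}$ controlled by $K_0'$, while the remaining operators depend also on $\dot{v}$ and $\dot{\widetilde{q}}_0$, controlled by hypothesis. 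This yields $\p \dot{A} \p_{Y \to H^{s-1}_0 \times H^{s-1}_0} \leq C(K_0, K_0') \sqrt{\kappa}$, so the composite $A^{-1} \dot{A} A^{-1}$ is $O(\kappa^{-1/2})$ as a map from $H^{s-1}_0 \times H^{s-1}_0$ into $Y$. Integrating in $\tau$ on $[0,t]$ and bounding $U$ by Theorem \ref{theorem_evolution_operator}(e) contributes an extra factor of $t$; combining with the boundary terms produces the stated $(1+t)$ prefactor.

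The main technical obstacle I expect is ensuring that the Neumann series for $A(\tau)^{-1}$ converges in the higher norm of $Y$---rather than only in $X = H^0_0 \times H^0_0$, as sufficed for Proposition \ref{prop_first_order}---with operator norm $O(\kappa^{-1/2})$ whose implicit constant depends only on $K_0$. This requires re-running the argument of (\ref{proof_semigroup_1})--(\ref{A_la_inverse_bound}) in the $H^{s+\frac{1}{2}}_0$ setting using the estimates (\ref{quasi_linear_estimates_psQ})--(\ref{quasi_linear_estimates_lower_2}) and the $\psi$-harmonic bound (\ref{elliptic_estimate_f_bry}), and checking that each power of $\kappa^{-1/2}$ appearing in the lower-order $\cZ_i$ dominates the corresponding Sobolev-norm factor in $f$, $v$, or $\widetilde{q}_0$ that could otherwise destabilize the series.
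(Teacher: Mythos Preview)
Your proposal is correct and follows essentially the same route as the paper: Duhamel's formula, the identity $\partial_\tau U(t,\tau)=U(t,\tau)A(\tau)$, integration by parts against $\chi=A^{-1}\cG$, and the two key operator bounds $\p A^{-1}\p_{H^{s-1}_0\to Y}\le C(K_0)\kappa^{-1/2}$ and $\p A^{-1}\dot A\,A^{-1}\p_{H^{s-1}_0\to Y}\le C(K_0,K_0')\kappa^{-1/2}$. The paper obtains the first bound directly from $A_\kappa^{-1}=\kappa^{-1/2}\widehat A_\kappa^{-1}$ (using the calculations already done in Proposition~\ref{prop_first_order}) rather than via a Neumann series in $Y$, which sidesteps the technical obstacle you flag.
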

\begin{proof}
It follows from the calculations in the proof of
proposition \ref{prop_first_order} that 
\begin{gather}
A^{-1}_\kappa = \frac{1}{\sqrt{\kappa}} \widehat{A}_\kappa^{-1},
\nonumber
\end{gather}
and that the norm of $A^{-1}_\kappa$ is bounded by $\frac{C(K_0)}{\sqrt{\kappa}}$ for large $\kappa$. Hence,
\begin{gather}
\p A_\kappa^{-1} w \p_{s+\frac{1}{2},\partial} \leq \frac{C(K_0)}{\sqrt{\kappa}} \p w \p_{s-1,\partial}.
\label{estimate_A_kappa_inverse}
\end{gather}
Next, invoking Duhamel's principle, we have
\begin{align}
\begin{split}
z(t) & = \cU(t,0) z(0) + \int_0^t \cU(t,\tau) \cG(\tau)  \, d\tau \\
& = \cU(t,0) z(0) + \int_0^t \cU(t,\tau) A_\kappa(\tau) (A_\kappa(\tau))^{-1} \cG(\tau) 
\, d\tau  \\
& = \cU(t,0) z(0) + \int_0^t \partial_\tau \cU(t,\tau) (A_\kappa(\tau))^{-1} \cG(\tau) \, d\tau   \\
& = \cU(t,0) z(0) +  (A_\kappa(t))^{-1} \cG(t) -  \cU(t,0)(A_\kappa(0))^{-1} \cG(0) \\
& + \int_0^t \cU(t,\tau)  (A_\kappa(\tau))^{-1} (\partial_\tau A_\kappa(\tau) )  (A_\kappa(\tau))^{-1} \cG(\tau) \, d\tau - \int_0^t  (A_\kappa(\tau))^{-1} \partial_\tau \cG(\tau)\, d\tau,
\end{split}
\label{Duhamel}
\end{align}
where $\cU$ is the evolution operator for $-A_\kappa$, satisfying
\begin{gather}
\partial_\tau \cU(t,\tau) = \cU(t,\tau) A_\kappa(\tau)
\nonumber
\end{gather}
and $\cU(t,t) = I$ (identity operator), and we have integrated by parts and used
\begin{gather}
\partial_\tau (A_\kappa(\tau))^{-1} = - (A_\kappa(\tau))^{-1} \partial_\tau A_\kappa(\tau)
(A_\kappa(\tau))^{-1}.
\nonumber
\end{gather}
Differentiating $A_\kappa(t)$, and using the hypotheses on $f$, $\widetilde{q}_0$,
and $v$, one checks, with the help of (\ref{estimate_A_kappa_inverse}), that 
\begin{gather}
\p (A_\kappa(\tau))^{-1} \partial_\tau A_\kappa(\tau)
(A_\kappa(\tau))^{-1} w \p_{s+\frac{1}{2},\partial} \leq \frac{C(K_0,K_0^\prime)}{\sqrt{\kappa}} \p w \p_{s-1, \partial}.
\label{estimate_A_kappa_time_der}
\end{gather}
We remark that while the expression for $\partial_\tau A_\kappa(\tau)$ 
is rather cumbersome, and thus will not be given here, the estimate
(\ref{estimate_A_kappa_time_der}) follows essentially by counting derivatives, 
 with some of the less-regular terms handled via standard 
arguments. For instance, $\dot{v} \in H^{s-\frac{3}{2}}(\Om,\RR^3)$.
But $\dot{v}$ is divergence-free and tangent to the boundary,
thus $\dive (\nabla_v \dot{v} ) \in H^{s-\frac{5}{2}}(\Om)$ and
$\langle \nu , \nabla_v \dot{v} \rangle = - 
\langle \nabla_v \nu, \dot{v} \rangle
\in H^{s-2}(\partial \Om)$, so that $Q(\nabla_v \dot{v} ) 
\in H^{s-\frac{3}{2}}(\Om, \RR^3)$.
Thus 
$\left( \Delta_\nu^{-1} \dive\left( ( D^2 \cS^{-1} h ) Q(\nabla_v \dot{v}) \right)\right)
\in H^{s-\frac{1}{2}}(\Om)$, or
\begin{gather}
\left. \left( \Delta_\nu^{-1} \dive\left( ( D^2 \cS^{-1} h ) Q(\nabla_v \dot{v}) \right)\right)\right|_{\partial \Om} \in H^{s-1}(\partial \Om),
\nonumber
\end{gather}
 This
is needed in order that (\ref{estimate_A_kappa_time_der}) make sense
(see $\cW_9$ in the definition of $A_\kappa$).

Then combining (\ref{estimate_A_kappa_inverse}) and (\ref{estimate_A_kappa_time_der}) with 
(\ref{Duhamel}) gives
\begin{align}
\begin{split}
\p z(t) \p_{s+\frac{1}{2},\partial} & \leq 
C(K_0,K_0^\prime) \Big ( \p z(0) \p_{s+\frac{1}{2},\partial} + 
\sup_{0\leq \tau \leq t} \frac{(1+t)}{\sqrt{\kappa}} \p \cG(\tau) \p_{s-1,\partial}
\\
&+ 
\sup_{0\leq \tau \leq t} \frac{(1+t)}{\sqrt{\kappa}} \p \partial_\tau\cG(\tau) \p_{s-1,\partial} \Big ).
\end{split}\nonumber
\end{align}
Note that the constant $C(K_0,K_0^\prime)$ 
appearing in the above
estimate has the desired form (see notation \ref{notation_constants}). 
This finishes the proof.
\end{proof}

\begin{proposition}
Let $h_1 \in H_0^{s+\frac{1}{2}}(\partial \Om, \RR)$,
and assume the hypotheses of  proposition \ref{prop_first_order}.
Then, if $\kappa$ is sufficiently large, there exists
a unique solution
\begin{gather}
h \in C^0([0,T], H_0^{s+2}(\partial \Om, \RR)) \cap 
C^1([0,T], H_0^{s+\frac{1}{2}}(\partial \Om, \RR))
\cap 
C^2([0,T], H_0^{s-1}(\partial \Om, \RR))
\nonumber
\end{gather}
of (\ref{eq_h_bry}), satisfying $h(0) = 0$, and
$\dot{h}(0) = h_1$,  where  
where $h$ is extended to $\Om$ via 
its $\psi$-harmonic extension. If in addition the hypotheses 
of proposition \ref{prop_first_order_estimate} hold,
then   
there exists a constant $K_2>0$,
 such that, 
\begin{align}
\begin{split}
\p h(t) \p_{s+2,\partial} & \leq 
\frac{K_2}{\sqrt{\kappa}} \p \dot{h}(0) \p_{s+\frac{1}{2},\partial} 
+  
 \frac{K_2(1+T)}{\kappa},
 \end{split}
\nonumber
\end{align}
 \begin{align}
\begin{split}
\p \dot{h} \p_{s+\frac{1}{2},\partial} & \leq 
K_2\p \dot{h}(0) \p_{s+\frac{1}{2},\partial} 
+  
\frac{K_2(1+T)}{\sqrt{\kappa}},
 \end{split}
\nonumber
\end{align}
and $\p \ddot{ h } \p_{s-1,\partial} \leq K_2$,
$0 \leq t \leq T$, where $K_2 = \sup_{0\leq \tau \leq T} \cP(\tau) $,
with $\cP$ a continuous function of  
\begin{gather}
\p v (\tau) \p_s, 
\p \dot{v} (\tau) \p_{s-\frac{3}{2}},
 \p \widetilde{q}_0(\tau)  \p_{s+1}, 
 \p \dot{\widetilde{q}}_0 (\tau) \p_{s-\frac{1}{2}}, T,
K_0, \text{ and } K_0^\prime.
\nonumber
\end{gather}
\label{prop_linear_eq_h}
\end{proposition}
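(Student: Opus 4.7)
The plan is to transform the second-order equation (\ref{eq_h_bry}) into the first-order system (\ref{first_order_simple}) via the substitution $z = (\sqrt{\kappa}\,\cS h,\dot h)$ and appeal to Propositions \ref{prop_first_order} and \ref{prop_first_order_estimate}. Since $\cS^{2} = \cL = -\overline{\Delta}\partial_\nu - \tfrac{1}{2}\cA_{\partial\Om}\overline{\Delta}$, the principal pair $\ddot h - \kappa\cL h$ translates to $\dot z_2 + \sqrt{\kappa}\,\cS z_1$, matching the action of $\cZ_0$, and each of the remaining twelve lines of (\ref{eq_h_bry}) was constructed precisely so as to match one of $\cZ_1,\ldots,\cZ_{12}$; the inhomogeneity collapses to $\cG$ in (\ref{cG_definition}). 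The initial conditions $h(0)=0$ and $\dot h(0)=h_1$ translate to $z(0)=(0,h_1)$, so $\p z(0)\p_{s+\frac{1}{2},\partial} = \p h_1\p_{s+\frac{1}{2},\partial}$.

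Next I would check the regularity of $\cG$ needed to invoke Proposition \ref{prop_first_order}. Although $\nabla_v v$ is only in $H^{s-1}(\Om,\RR^{3})$ by a naive count, the fact that $v$ is divergence-free and tangent to $\partial\Om$ gives $\dive(\nabla_v v) = \operatorname{tr}((Dv)^{2})$ and $\langle\nabla_v v,\nu\rangle = -\langle\nabla_v\nu,v\rangle$, so $Q(\nabla_v v) \in H^{s}(\Om,\RR^{3})$ and $\Delta_\nu^{-1}\dive(\nabla_v v) \in H^{s+1}_0(\Om)$, whose boundary trace puts $\cG$ in $C^{0}([0,T],H^{s+\frac{1}{2}}_0\times H^{s+\frac{1}{2}}_0)$. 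Differentiation in time brings in $\dot v$, which is still divergence-free and tangent to $\partial\Om$, so the same trick (already exploited in the proof of Proposition \ref{prop_first_order_estimate} for $\cW_9$) yields $\partial_t\cG \in C^{0}([0,T],H^{s-1}_0\times H^{s-1}_0)$. Proposition \ref{prop_first_order} then produces a unique $z\in C^{0}([0,T],Y)\cap C^{1}([0,T],X)$ with $z(0)=(0,h_1)$. I would then set $h := \tfrac{1}{\sqrt{\kappa}}\cS^{-1}z_1$; by Lemma \ref{lemma_elliptic_pseudo_2} and the fact that $\dot h = z_2$ can be read off the first row of (\ref{first_order_simple}), $h$ has the claimed regularity $C^{0}H^{s+2}\cap C^{1}H^{s+\frac{1}{2}}\cap C^{2}H^{s-1}$, and uniqueness of $h$ is inherited from that of $z$.

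For the quantitative bounds, Proposition \ref{prop_first_order_estimate} gives $\p z(t)\p_{s+\frac{1}{2},\partial}\leq K_{1}\p h_1\p_{s+\frac{1}{2},\partial} + \tfrac{K_1(1+T)}{\sqrt{\kappa}}M$, where $M$ controls $\sup_\tau\bigl(\p\cG(\tau)\p_{s-1,\partial} + \p\partial_\tau\cG(\tau)\p_{s-1,\partial}\bigr)$ by a continuous function $\cP$ of $\p v\p_s$, $\p\dot v\p_{s-\frac{3}{2}}$, $\p\widetilde q_0\p_{s+1}$, $\p\dot{\widetilde q}_0\p_{s-\frac{1}{2}}$. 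Because $h = \tfrac{1}{\sqrt{\kappa}}\cS^{-1}z_1$, the bound on $z_1$ is divided by $\sqrt{\kappa}$ and gives $\p h\p_{s+2,\partial} \leq \tfrac{K_2}{\sqrt{\kappa}}\p h_1\p_{s+\frac{1}{2},\partial} + \tfrac{K_2(1+T)}{\kappa}$; the bound on $\dot h = z_2$ is the estimate for $z$ itself; and the bound on $\ddot h$ in $H^{s-1}_0$ is recovered by reading (\ref{eq_h_bry}) as an identity for $\ddot h$ and estimating each spatial term using the bounds just obtained together with $\p f\p_{s+2,\partial}\leq K_0/\sqrt{\kappa}$. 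The main technical point throughout is the bookkeeping of powers of $\sqrt{\kappa}$: it is precisely the scaling $z_1 = \sqrt{\kappa}\,\cS h$ that converts the $\kappa^{-1/2}$ gain in (\ref{z_estimate}) into the decisive $\kappa^{-1}$ decay of $\p h\p_{s+2,\partial}$, a feature that will be essential to close the iteration scheme of Section \ref{section_existence} under the a priori constraint $\p f\p_{s+2,\partial}\leq K_0/\sqrt{\kappa}$; the only other delicate step is the regularity of $\partial_t\cG$, which would break without the divergence-freeness and tangency of $\dot v$.
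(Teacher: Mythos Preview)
Your proposal is correct and follows essentially the same approach as the paper: reduce (\ref{eq_h_bry}) to the first-order system (\ref{first_order_simple}) via $z=(\sqrt{\kappa}\,\cS h,\dot h)$, invoke Propositions \ref{prop_first_order} and \ref{prop_first_order_estimate}, recover $h=\tfrac{1}{\sqrt{\kappa}}\cS^{-1}z_1$, and read off $\ddot h$ from the equation. The only cosmetic difference is that the paper first defines $h$ by integrating $\dot h=z_2$ and then verifies $\sqrt{\kappa}\,\cS h=z_1$, whereas you start from $z_1$ and deduce $\dot h=z_2$ from the first row of the system; the regularity check for $\cG$ and $\partial_t\cG$ via the divergence-free, tangent structure of $v,\dot v$ is handled exactly as in the paper.
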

\begin{proof}
The argument
is more or less standard, so we shall go over it briefly.
Let $\cG$ be given by (\ref{cG_definition}).
Consider the solution to (\ref{first_order_simple}) with initial condition
$z(0) = (0, \dot{h}(0))$. Define $h$ as the solution of
\begin{gather} 
\partial_t h = z_2,
\nonumber
\end{gather}
with initial condition $h(0) = 0$. Then
\begin{align}
\begin{split}
\sqrt{\kappa} \cS h (t) & = \sqrt{\kappa} \int_0^t \partial_t \cS h(\tau) \, d\tau 
\\
& = \sqrt{\kappa} \int_0^t \cS z_2(\tau) \, d\tau \\
& = z_1(t),
\end{split}
\nonumber
\end{align}
after using the first line of (\ref{first_order_simple}), i.e., 
$\partial_t z_1 = \sqrt{\kappa}\cS z_2$. By inspection we see that 
$h = \frac{1}{\sqrt{\kappa}} \cS^{-1} z_1$ solves (\ref{eq_h_bry})
and has the correct regularity.

From the properties of $\cS$ that follow from lemma 
\ref{lemma_elliptic_pseudo_2},
\begin{gather}
\frac{\sqrt{\kappa}}{C} \p h \p_{s+2,\partial} \leq \p \sqrt{\kappa} \cS h \p_{s+\frac{1}{2}} 
\leq C\sqrt{\kappa}\p h \p_{s+2,\partial},
\label{elliptic_norm_equivalent}
\end{gather} 
which combined with (\ref{z_estimate}) and the definition of $z$ gives
\begin{align}
\begin{split}
\p h \p_{s+2,\partial} & \leq 
C(K_1) \Big (
\frac{1}{\sqrt{\kappa}} \p \dot{h}(0) \p_{s+\frac{1}{2},\partial} 
+  
\sup_{0\leq \tau \leq t} \frac{(1+t)}{\kappa} \p \cG(\tau) \p_{s-1,\partial}
\\
&
+ 
\sup_{0\leq \tau \leq t} \frac{(1+t)}{\kappa} \p \partial_\tau\cG(\tau) \p_{s-1,\partial} \Big),
\end{split}
\nonumber
\end{align}
and
\begin{align}
\begin{split}
\p \dot{h} \p_{s+\frac{1}{2},\partial} & \leq 
C(K_1) \Big( \p \dot{h}(0) \p_{s+\frac{1}{2},\partial} 
+  
\sup_{0\leq \tau \leq t} \frac{(1+t)}{\sqrt{\kappa}} \p \cG(\tau) \p_{s-1,\partial}
\\
&+ 
\sup_{0\leq \tau \leq t} \frac{(1+t)}{\sqrt{\kappa}} \p \partial_\tau\cG(\tau) \p_{s-1,\partial} \Big ),
\end{split}
\nonumber
\end{align}
where $K_1$ is the constant appearing in the conclusion 
of proposition \ref{prop_first_order_estimate},
and we have used $h(0) = 0$.
The estimate for $\ddot{h}$ now follows by solving for
$\ddot{h}$ in equation (\ref{eq_h_bry}) and using the above 
estimates for $h$ and $\dot{h}$.
We then invoke  the definition of $\cG$ to obtain the result.
Here, we need to check that $\cG$ indeed has the correct regularity.
This follows from the fact that $v$ and $\dot{v}$ are divergence 
free and tangent to the boundary, 
so that
$Q(\nabla_v v) \in H^s(\Om,\RR^3)$
and
$Q(\nabla_v \dot{v}) \in H^{s-\frac{3}{2}}(\Om, \RR^3)$
(see similar discussion right after (\ref{estimate_A_kappa_time_der})).
\end{proof}
The main result  of this section, theorem \ref{theo_f_eq} below,
uses the Schauder fixed point theorem and the Arzel\`a-Ascoli theorem,
which we state for
the reader's convenience. 

\begin{theorem} (Schauder fixed point)
Let $S$ be a closed convex set in a Banach space, and let $T$ be a continuous 
mapping of $S$ into itself such that the image $T(S)$ is pre-compact. Then
$T$ has a fixed point.
\label{Schauder}
\end{theorem}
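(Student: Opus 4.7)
The plan is to reduce the statement to Brouwer's fixed point theorem via a finite-dimensional approximation argument, exploiting the pre-compactness hypothesis to produce fine $\varepsilon$-nets in $T(S)$. First I would fix, for each integer $n \geq 1$, a finite $1/n$-net $\{y_1^{(n)}, \ldots, y_{k_n}^{(n)}\} \subset T(S)$, whose existence is guaranteed by the pre-compactness of $T(S)$. Since $S$ is convex and each $y_j^{(n)} \in T(S) \subset S$, the convex hull $K_n := \operatorname{conv}\{y_1^{(n)}, \ldots, y_{k_n}^{(n)}\}$ is a compact convex subset of $S$ homeomorphic to a finite-dimensional simplex.

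Next I would introduce the Schauder projection
\begin{equation*}
\pi_n(y) \;=\; \frac{\sum_{j=1}^{k_n} \max\{1/n - \|y - y_j^{(n)}\|, 0\}\, y_j^{(n)}}{\sum_{j=1}^{k_n} \max\{1/n - \|y - y_j^{(n)}\|, 0\}},
\end{equation*}
defined for $y \in T(S)$, which takes values in $K_n$ and satisfies $\|\pi_n(y) - y\| \leq 1/n$ by a direct convexity estimate (only indices with $\|y - y_j^{(n)}\| < 1/n$ contribute). Then the composition $\pi_n \circ T$ restricts to a continuous self-map of $K_n$. By Brouwer's fixed point theorem applied on the finite-dimensional compact convex set $K_n$, there exists $x_n \in K_n$ with $\pi_n(T(x_n)) = x_n$.

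Finally I would extract a limit. Since $T(x_n) \in T(S)$ and $T(S)$ is pre-compact, a subsequence $T(x_{n_\ell})$ converges to some $x^* \in \overline{T(S)} \subset \overline{S} = S$. From the bound $\|x_{n_\ell} - T(x_{n_\ell})\| = \|\pi_{n_\ell}(T(x_{n_\ell})) - T(x_{n_\ell})\| \leq 1/n_\ell$ it follows that $x_{n_\ell} \to x^*$ as well, and continuity of $T$ then forces $T(x^*) = x^*$.

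The principal technical point is the verification that $\pi_n$ is genuinely continuous and that $\|\pi_n(y) - y\| \leq 1/n$, which requires care because the coefficients in the numerator and denominator can vanish; however, for every $y \in T(S)$ at least one term $\max\{1/n - \|y - y_j^{(n)}\|, 0\}$ is strictly positive (since the $y_j^{(n)}$ form a $1/n$-net), so the denominator never vanishes and the map is well-defined and continuous. The only other subtlety, really a conceptual one rather than a computational obstacle, is ensuring the self-mapping property $\pi_n \circ T : K_n \to K_n$, which rests on the inclusion $K_n \subset S$ that uses convexity of $S$ in an essential way. Everything else is a direct application of Brouwer plus a compactness-based limiting argument.
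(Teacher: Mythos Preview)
Your argument is the classical Schauder-projection proof and is correct as written: the net construction, the projection estimate $\|\pi_n(y)-y\|\le 1/n$, the application of Brouwer on each $K_n$, and the compactness-based extraction of the limit are all standard and sound. The paper does not supply its own proof of this theorem; it simply cites \cite{Schwartz} and \cite{GT}, and the argument you have sketched is essentially the one found in those references.
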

\begin{proof} \cite{Schwartz} or \cite{GT}.
\end{proof}

\begin{theorem} (Arzel\`a-Ascoli) Let $X$ be a topological space and $(Y,d)$ a metric space.
Consider $C^0(X,Y)$ with the topology of compact convergence. Let $\cF$ be a subset
of $C^0(X,Y)$. If $\cF$ is equi-continuous under $d$ and the set
\begin{gather}
\cF_x = \{ f(x) \, | \, f \in \cF \}
\nonumber
\end{gather}
has compact closure for each $x \in X$, then $\cF$ is contained in a compact subspace of 
$C^0(X,Y)$.
\end{theorem}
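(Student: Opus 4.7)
The plan is to reduce the statement to the classical case when the base space is compact, and then embed $\cF$ into a Tychonoff product, with equicontinuity doing the work of upgrading pointwise control to uniform control.

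First, I reduce to the case when $X$ is itself compact. By definition, the topology of compact convergence on $C^0(X,Y)$ is the initial topology generated by the restriction maps $r_K : C^0(X,Y) \rar C^0(K,Y)$ (with the uniform topology on the target) as $K$ ranges over compact subsets of $X$. A standard fact about initial topologies then gives that a subset is contained in a compact subspace if and only if its image under each $r_K$ is contained in a compact subspace of $C^0(K,Y)$. Each $r_K(\cF)$ plainly inherits equicontinuity and pointwise precompactness from $\cF$, so the whole problem reduces to the case in which $X$ is compact; I shall assume this from now on.

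Next, I embed $\cF$ into a Tychonoff product. For each $x \in X$, let $Y_x$ be the closure of $\cF_x$ in $Y$; by hypothesis $Y_x$ is compact, so by Tychonoff's theorem $\Pi = \prod_{x \in X} Y_x$ is compact in the product topology. The evaluation map $e : C^0(X,Y) \rar Y^X$ defined by $e(f) = (f(x))_{x \in X}$ sends $\cF$ into $\Pi$; let $G$ be the closure of $e(\cF)$ in $\Pi$, a compact subset of $\Pi$. The rest of the argument is to show that $G$ lifts back to a compact subspace of $C^0(X,Y)$ containing $\cF$.

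For this I verify two things. First, every $g \in G$ is continuous: given $x_0 \in X$ and $\varepsilon > 0$, equicontinuity of $\cF$ yields a neighborhood $U$ of $x_0$ with $d(f(x),f(x_0)) < \varepsilon$ for all $x \in U$ and all $f \in \cF$; for $g \in G$ and $x \in U$, approximating $g$ in the coordinates $x$ and $x_0$ simultaneously by some $f \in \cF$ gives $d(g(x),g(x_0)) \leq \varepsilon$. Second, the product topology on $G$ coincides with the topology of uniform convergence: given $g_0 \in G$ and $\varepsilon > 0$, compactness of $X$ combined with equicontinuity furnishes a finite open cover $U_1,\dots,U_m$ of $X$ and sample points $x_i \in U_i$ with $d(f(x),f(x_i)) < \varepsilon/3$ for all $x \in U_i$ and all $f \in \cF$ (and hence, by passage to limits, for all $g \in G$); the basic product neighborhood of $g_0$ determined by $\{x_1,\dots,x_m\}$ and $\varepsilon/3$ then forces $\sup_{x \in X} d(g(x),g_0(x)) < \varepsilon$. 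Hence $G$ is compact in the uniform topology and contains $e(\cF)$, which completes the proof.

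The main conceptual point, and the only place compactness is used essentially, is this last step: the passage from pointwise closeness to uniform closeness via a finite sub-cover produced by equicontinuity. There is no genuine obstacle beyond bookkeeping, which is why the theorem is standard.
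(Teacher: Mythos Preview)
The paper does not give a proof of this theorem; it simply cites Munkres. Your steps 2--5 are essentially the textbook argument found there, and they are correct for compact $X$.

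Your reduction in step 1, however, has a gap. The ``standard fact about initial topologies'' you invoke --- that a subset is contained in a compact subspace if and only if its image under each defining map is --- is false without further hypotheses: it would require the image of the embedding $C^0(X,Y) \hookrightarrow \prod_K C^0(K,Y)$ to be closed, and this need not hold for arbitrary $X$. Concretely, a compatible family of continuous restrictions $g_K : K \to Y$ glues to a function $g : X \to Y$ that is continuous on every compact subset, but $g$ need not be continuous on $X$ unless $X$ is compactly generated. The remedy is to drop the reduction and run steps 2--5 directly for general $X$: take the closure $G$ of $e(\cF)$ in $Y^X$; step 4 goes through verbatim using equicontinuity; and in step 5, instead of matching the product topology with the uniform topology on all of $X$, match it with the compact-convergence topology by applying your finite-cover argument to each compact $K \subseteq X$ separately. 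This is how Munkres organizes the argument.
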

\begin{proof} \cite{Munkres}.
\end{proof}

\begin{theorem}
Assume that 
\begin{gather}
v \in C^0\left([0,T], H^s(\Om, \RR^3)\right)
\cap
C^1([0,T], H^{s-\frac{3}{2}}(\Om, \RR^3))
, \, \dive(v) = 0,
\nonumber
\end{gather}
and
\begin{gather}
\widetilde{q}_0 \in 
C^0 \left([0,T],H^{s+1}(\Om,\RR)\right)
\cap
C^1 ([0,T],H^{s-\frac{1}{2}}(\Om,\RR)),
\nonumber
\end{gather}
$s > \frac{3}{2} + 2$, $T >0$. Let 
$f_1 \in H^{s+\frac{1}{2}}_0(\partial \Om)$ satisfy
\begin{gather}
\p f_1 \p_{s+\frac{1}{2},\partial} \leq \frac{K_3}{\sqrt{\kappa}},
\label{f_1_hypothesis}
\end{gather}
for some constant $K_3$. Finally, let $\cG$ be given by (\ref{cG_definition}).
Then, if $\kappa$ is sufficiently large, there exists a $T^\prime \in (0,T]$
and a solution 
\begin{gather}
f \in C^0([0,T^\prime], H_0^{s+2}(\partial \Om, \RR)) \cap 
C^1([0,T^\prime], H_0^{s+\frac{1}{2}}( \partial \Om, \RR))
\cap 
C^2([0,T^\prime], H_0^{s-1}( \partial \Om, \RR)),
\nonumber
\end{gather}
satisfying (\ref{eq_f_bry}) with initial conditions
$f(0) = 0$, and $\dot{f}(0) = f_1$. In (\ref{eq_f_bry}), it is
understood that $f$ is extended $\psi$-harmonically to $\Om$. Furthermore, $f$
obeys the estimate
\begin{gather}
\p f \p_{s+2,\partial} \leq \frac{K_4}{\kappa},
\nonumber
\end{gather}
\begin{gather}
\p \dot{f} \p_{s+\frac{1}{2},\partial} \leq \frac{K_4}{\sqrt{\kappa}},
\nonumber
\end{gather}
and
\begin{gather}
\p \ddot{f} \p_{s-1,\partial } \leq K_4.
\nonumber
\end{gather}
The constant $K_4$ is given by
  $K_4 = \sup_{0\leq \tau \leq T} \cP(\tau) $,
with $\cP$ a continuous function of  
\begin{gather}
\p v (\tau) \p_s, 
\p \dot{v} (\tau) \p_{s-\frac{3}{2}},
 \p \widetilde{q}_0(\tau)  \p_{s+1}, 
 \p \dot{\widetilde{q}}_0 (\tau) \p_{s-\frac{1}{2}},
 T, \text{ and } K_3.
\nonumber
\end{gather}
\label{theo_f_eq}
\end{theorem}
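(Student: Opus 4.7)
The plan is to read (\ref{eq_f_bry}) as a quasi-linear fixed point problem: freezing the coefficient $f$ that appears inside every nonlinear expression ($\widetilde{\eta}$, $L_1$, $\cH_{\widetilde{\eta}}$, $B_f$, and the pseudo-differential operators of section~\ref{section_geom_boundary}) at a given $g \in \cM_{R,\ell,T^\prime,s+2}$, and treating the differentiated $f$ as a new unknown $h$, produces exactly the linear equation (\ref{eq_h_bry}) of proposition~\ref{prop_linear_eq_h}. Define $\Psi(g) = h$, where $h$ is the solution of that linear problem with initial data $h(0)=0$ and $\dot h(0) = f_1$. A fixed point $h = g$ solves (\ref{eq_f_bry}), because the identity $(D\widetilde{\eta})^{-1}-\id = -(D\widetilde{\eta})^{-1}D^2 f = B_f(D^2 f)$, together with its harmonic-extension analogue, matches each term of (\ref{eq_h_bry}) with the corresponding term of (\ref{eq_f_bry}) under the substitution $h = g = f$. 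Schauder's theorem (theorem~\ref{Schauder}) will provide such a fixed point.

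For the self-map property I feed $g \in \cM_{R,\ell,T^\prime,s+2}$ into proposition~\ref{prop_linear_eq_h} with $\p \dot h(0)\p_{s+\frac{1}{2},\partial} = \p f_1 \p_{s+\frac{1}{2},\partial} \le K_3/\sqrt{\kappa}$, obtaining
\[
\p h(t) \p_{s+2,\partial} \le \frac{K_2(K_3+1+T^\prime)}{\kappa},\qquad
\p \dot h(t) \p_{s+\frac{1}{2},\partial} \le \frac{K_2(K_3+1+T^\prime)}{\sqrt{\kappa}}.
\]
The second bound, integrated in time, yields the Lipschitz inequality in $H^{s+\frac{1}{2}}$ required by notation~\ref{cM_notation}. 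Choosing $R = 2K_2(K_3+1+T^\prime)/\kappa$ and $\ell = 2K_2(K_3+1+T^\prime)/\sqrt{\kappa}$, and taking $\kappa$ large enough that $R \le K_0/\sqrt{\kappa}$ and $\ell \le K_0^\prime$, one has $\Psi(\cM) \subset \cM$; the smallness assumptions on the coefficient $f$ required by propositions~\ref{prop_first_order} and \ref{prop_first_order_estimate} are therefore uniformly satisfied on $\cM$.

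The set $\cM$ is convex and closed in $C^0([0,T^\prime], H^{s+1}(\partial \Om))$, which I adopt as the ambient topology, since both the $H^{s+2}$-ball and the $H^{s+\frac{1}{2}}$-Lipschitz condition are preserved under convergence in this weaker norm. Pre-compactness of $\Psi(\cM)$ follows from Arzel\`a-Ascoli: remark~\ref{remark_continuity_future} provides uniform H\"older-$\frac{2}{3}$ equi-continuity into $H^{s+1}(\partial\Om)$, while the compact embedding $H^{s+2}(\partial\Om) \hookrightarrow H^{s+1}(\partial\Om)$ yields pre-compactness of the orbits at each fixed time. The step I expect to be the main obstacle is continuity of $\Psi$ in this topology, as the operators $\cZ_i$ contain coefficients with up to two derivatives of $g$. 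My plan is to write $\Psi(g_n)-\Psi(g)$ as the solution of the linear equation (\ref{eq_h_bry}) with coefficient $g$ and forcing $(A_\kappa(g_n)-A_\kappa(g))\,\Psi(g_n)$, then to apply the energy estimate of proposition~\ref{prop_first_order_estimate}. Smooth dependence of the $\cZ_i$ on $g$ and its derivatives of order at most two, combined with the uniform $H^{s+2}$ bound on $\Psi(g_n)$, allows one to dominate the forcing in $H^{s-1}(\partial\Om)$ by $C\,\p g_n - g \p_{s+1,\partial}$, giving convergence of $\Psi(g_n)$ to $\Psi(g)$ in $C^0([0,T^\prime], H^{s+1}(\partial\Om))$.

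Schauder's theorem then delivers a fixed point $f \in \cM$, hence a solution of (\ref{eq_f_bry}) with the regularity $C^0 H^{s+2} \cap C^1 H^{s+\frac{1}{2}} \cap C^2 H^{s-1}$ inherited from proposition~\ref{prop_linear_eq_h}. Applied to $h = f$, the same proposition yields the claimed estimates with $K_4$ proportional to $K_2(K_3+1+T)$, and the continuous dependence of $K_4$ on the listed quantities is inherited from that of $K_2$. Uniqueness follows from the same difference-equation argument used to prove continuity of $\Psi$: any two small solutions satisfy a linear equation in their difference with zero data and a source bounded by the $H^{s+1}$-norm of the difference itself, so Gronwall's inequality forces them to coincide, closing the argument.
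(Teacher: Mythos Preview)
Your strategy matches the paper's: a Schauder fixed point on $\cM$, with $\Psi(g)$ the solution of the linear problem (\ref{eq_h_bry}) with the coefficient frozen at $g$. The gap is in how you obtain the self-map bounds and continuity. The refined estimates of proposition~\ref{prop_linear_eq_h} that you invoke are conditional on the extra hypotheses of proposition~\ref{prop_first_order_estimate}, in particular that the frozen coefficient lie in $C^0 H^{s+2}_0 \cap C^1 H^{s+1/2}_0 \cap C^2 H^{s-1}_0$ with a bound on $\p \dot g\p_{s+1/2,\partial}$. Elements of $\cM$ carry only an $L^\infty$-in-time $H^{s+2}$ bound together with a Lipschitz condition in $H^{s+1/2}$; no $C^1$ or $C^2$ time regularity is available, so proposition~\ref{prop_first_order_estimate} does not apply to a generic $g\in\cM$ as stated. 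Your continuity argument has the same defect: treating $(A_\kappa(g_n)-A_\kappa(g))\Psi(g_n)$ as the forcing in proposition~\ref{prop_first_order_estimate} requires that forcing, and hence $g_n$ and $g$, to be $C^1$ in time.

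The paper handles both points differently. For the self-map it uses only the crude Duhamel bound (\ref{z_U_cG})--(\ref{estimate_using_dot_f_small}), which needs nothing beyond proposition~\ref{prop_first_order} and $g\in\cM$; this fixes $R = K_0/\sqrt{\kappa}$ with $K_0=K_3$, not your $O(1/\kappa)$. For continuity the paper works in the much weaker ambient space $L^\infty([0,T],H^0_0(\partial\Om))$ and uses the perturbation identity (\ref{perturbation})--(\ref{estimate_Kato_II}), controlling $\p(A-\widetilde A)z\p_{0,\partial}$ via the adjoint and negative-norm argument (\ref{estimate_difference_As_prel})--(\ref{estimate_difference_As_4}); this yields only a square-root modulus of continuity (\ref{continuity_Phi}), but that suffices for Schauder. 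The sharp $1/\kappa$ estimates are then obtained only at the very end, after the fixed point $f_*=\Phi(f_*)$ inherits the full $C^0\cap C^1\cap C^2$ regularity from the range of $\Phi$ and proposition~\ref{prop_linear_eq_h} can legitimately be invoked with $K_0=K_3$ and $K_0'=\ell$. Finally, the theorem does not assert uniqueness, and the paper explicitly refrains from claiming it here; your closing paragraph over-claims.
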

\begin{proof}
Consider the set $\cM$ from notation \ref{cM_notation} with the metric
\begin{gather}
d(f,g) = \sup_{0 \leq t \leq T} \p f(t) - g(t) \p_{0,\partial} .
\nonumber
\end{gather}
We start by noticing that $\cM$ is a complete metric space in the metric 
$d$. Indeed, if $f_n \rar f$ in the metric $d$, then, from the facts that
$B_R(0) \subset H^{s+2}(\partial \Om)$ is weakly compact where $B_R(0)$ denotes the ball about zero of radius $R$ in the metric $d$ and that the 
embedding 
$H^{s+2}(\partial \Om) \subset H^0(\partial \Om)$ is compact, we conclude
$\p f \p_{s+2,\partial} \leq R$. By the same reasoning we also find that,
for each $t$, $f(t) \in H^{s+\frac{1}{2}}(\partial \Om)$, 
and $f_n(t) \rar f(t)$ 
in $H^{s+\frac{1}{2}}(\partial \Om)$ since $H^{s+2}(\partial \Om) 
\subset H^{s+\frac{1}{2}}(\partial \Om)$ compactly. But,
\begin{align}
\begin{split}
\p f(t) - f(t^\prime) \p_{s+\frac{1}{2},\partial}
& \leq 
\p f(t) - f_n(t) \p_{s+\frac{1}{2},\partial}
+ 
\p f_n(t) - f_n(t^\prime) \p_{s+\frac{1}{2},\partial}
+
 \p f_n(t^\prime) - f(t^\prime) \p_{s+\frac{1}{2},\partial}
\\
& 
\leq \p f(t) - f_n(t) \p_{s+\frac{1}{2},\partial}
+ 
\ell |t - t^\prime|
+
\p f_n(t^\prime) - f(t^\prime) \p_{s+\frac{1}{2},\partial}.
\end{split}
\nonumber
\end{align}
Passing to the limit we conclude that
$\p f(t) - f(t^\prime) \p_{s+\frac{1}{2},\partial} \leq \ell |t - t^\prime|$,
thus $f \in \cM$. Therefore, $\cM$ is a closed subset of the Banach space
$L^\infty([0,T],H^0_0(\partial \Om))$. One immediately checks that
$\cM$ is convex.

Given $f \in \cM$ satisfying
\begin{gather}
\p f \p_{s+2,\partial} \leq \frac{K_0}{\sqrt{\kappa}},
\label{hypothesis_f_proof}
\end{gather}
proposition \ref{prop_linear_eq_h} yields
a solution $h$ to (\ref{eq_h_bry}) if $\kappa$ is sufficiently large.
We shall show that, 
if $T$ is small, $\kappa$ large, and $R$ and $\ell$ are suitably chosen, 
the association $f \mapsto h$ 
defines a map $\Phi: \cM \rar L^\infty([0,T],H^0_0(\partial \Om))$,
which (i) takes $\cM$ into itself, (ii) is continuous, and
(iii) has image $\Phi(\cM)$ pre-compact in 
$L^\infty([0,T],H^0_0(\partial \Om))$.

To fix the constant $K_0$ in (\ref{hypothesis_f_proof}), 
we put $K_0 = K_3$.  
Choose $R = R(K_0,\kappa) = \frac{K_0}{\sqrt{\kappa}}$,
so  that (\ref{hypothesis_f_proof}) holds
for $f \in \cM$. Thus, in light of proposition \ref{prop_linear_eq_h},
we obtain a map
\begin{gather}
\Phi: \cM \rar C^0([0,T], H_0^{s+2}(\partial \Om)) \cap 
C^1([0,T], H_0^{s+\frac{1}{2}}(\partial \Om))
\cap 
C^2([0,T], H_0^{s-1}(\partial \Om)),
\nonumber
\end{gather}
given by $\Phi(f) = h$, where $h$ is the solution of (\ref{eq_h_bry})
with initial conditions $h(0) = 0$, $\dot{h}(0) = f_1$. 
Notice that $\Phi$ can be viewed as a map from $\cM$ to 
$L^\infty([0,T],H^0_0(\partial \Om))$.

Letting $z = (\sqrt{\kappa} \cS h, \dot{h})$, where $\cS$ is given
by  (\ref{cS_defi}), $z$ satisfies (\ref{first_order_simple}), and is
given by
\begin{align}
\begin{split}
z(t) & = \cU(t,0) z(0) + \int_0^t \cU(t,\tau) \cG(\tau)  \, d\tau,
\end{split}
\label{z_U_cG}
\end{align}
where $z(0) = (0, f_1)$, $\cU$ is the evolution operator associated
with $-A_\kappa(t)$ (see proposition \ref{prop_first_order}), and
 $\cG$ is given by (\ref{cG_definition}). From this
and (\ref{elliptic_norm_equivalent}),
\begin{align}
\begin{split}
\p h \p_{s+2,\partial} & \leq \frac{C(K_0)}{\sqrt{\kappa}}
 \p f_1 \p_{s+\frac{1}{2},\partial}
+ \frac{C(K_0) T}{\sqrt{\kappa}} \sup_{0\leq \tau \leq T} \p \cG( \tau ) \p_{s+\frac{1}{2},\partial} \\
& \leq \frac{C(K_0)  K_3}{\kappa}
+ \frac{C(K_0) T}{\sqrt{\kappa}} \sup_{0\leq \tau \leq T} \p \cG( \tau ) \p_{s+\frac{1}{2},\partial},
\end{split}
\label{estimate_using_dot_f_small}
\end{align}
using (\ref{f_1_hypothesis}). We note that $C$ does depend
 on $K_0$ because of the estimates for $A_\kappa(t)$
in the proof of proposition \ref{prop_first_order}.
Choose $T$  small  and $\kappa$
large
so that
\begin{gather}
\frac{C(K_0) T}{\sqrt{\kappa}} \sup_{0\leq \tau \leq T} \p \cG( \tau ) \p_{s+\frac{1}{2},\partial} 
< \frac{R}{2},
\nonumber
\end{gather}
and
\begin{gather}
 \frac{C(K_0) K_3}{\kappa} < \frac{R}{2}.
\nonumber
\end{gather}
are satisfied.

Note that this last inequality is possible despite the dependence 
of $R$ on $K_0$ and $\kappa$ 
because $R$ is of the order of $\frac{1}{\sqrt{\kappa}}$, while the left hand side of the 
last inequality is of order $\frac{1}{\kappa}$. We conclude that
$\p h \p_{s+2,\partial} \leq R$. (\ref{z_U_cG}) also gives
\begin{align}
\begin{split}
\p \dot{h} \p_{s+\frac{1}{2},\partial} & \leq C(K_0) 
 \p f_1 \p_{s+\frac{1}{2},\partial}
+ C(K_0) T \sup_{0\leq \tau \leq T} \p \cG( \tau ) \p_{s+\frac{1}{2},\partial} \\
& \leq \frac{C(K_0) K_3}{\sqrt{\kappa}}
+C(K_0) T \sup_{0\leq \tau \leq T} \p \cG( \tau ) \p_{s+\frac{1}{2},\partial}.
\end{split}
\label{estimate_h_dot_cM}
\end{align}
This implies 
$\p h(t) - h(t^\prime) \p_{s+\frac{1}{2},\partial} \leq\ell |t - t^\prime|$
if we choose $\ell$ large enough as to have
the right side of (\ref{estimate_h_dot_cM}) less than $\ell$. 
We conclude that $h \in \cM$, i.e., 
$\Phi$ maps $\cM$ into itself.

Next, we study the continuity of $\Phi$. 
This requires estimating $h - \widetilde{h} \equiv \Phi(f) - \Phi(\widetilde{f})$,
where $f,\widetilde{f} \in \cM$.
Let $z$ be as above, and 
$\widetilde{z} = (\sqrt{\kappa} \cS \widetilde{h}, \dot{\widetilde{h}} )$,
with $-\widetilde{A}_\kappa$ the operator in (\ref{first_order_simple})
with $\widetilde{f}$ in place of $f$, and $\widetilde{\cU}$ the corresponding
evolution operator. We have the estimate:
\begin{align}
\begin{split}
\p z - \widetilde{z} \p_{0,\partial}  & \leq
C(K_0) \int_0^T 
 \p (A(t) - \widetilde{A}(t) ) z(t) \p_{0,\partial} \, dt.
\end{split}
\label{perturbation}
\end{align}
We recall how (\ref{perturbation}) is obtained. Computing we find
\begin{align}
\begin{split}
\partial_s (\widetilde{\cU}(t,s) \cU(s,r) y) & = 
\partial_s \widetilde{\cU}(t,s) \cU(s,r) y + \widetilde{\cU}(t,s)
\partial_s \cU(s,r) y \\
& =  \widetilde{\cU}(t,s) \widetilde{A}(s) \cU(s,r) y -
\widetilde{\cU}(t,s) A(s) U(s,r) y.
\end{split}
\nonumber
\end{align}
Then integrating between $r$ and $t$, we get
\begin{gather}
\widetilde{\cU}(t,r)y - \cU(t,r)y 
= \int_r^t \widetilde{\cU}(t,s) (\widetilde{A}(s) - A(s) ) \cU(s,r)y \, ds.
\label{estimate_Kato_II}
\end{gather}
Setting $r=0$ and $y = z(0)$ in (\ref{estimate_Kato_II}) gives
\begin{gather}
\widetilde{\cU}(t,0)z(0) - \cU(t,0)z(0) = 
\int_0^t \widetilde{\cU}(t,s) (\widetilde{A}(s) - A(s) ) \cU(s,0) z(0)\, ds,
\label{estimate_Kato_II_a}
\end{gather}
Setting $y= \cG(r)$ in (\ref{estimate_Kato_II}) 
and integrating in $r$ yields
\begin{align}
\begin{split}
\int_0^t ( \widetilde{\cU}(t,r) - \cU(t,r))\cG(r) \, dr 
& = \int_0^t \int_r^t \widetilde{\cU}(t,s) (\widetilde{A}(s) - A(s) ) \cU(s,r) \cG(r) \, ds \, dr 
\\
&
=  \int_0^t  \widetilde{\cU}(t,s) (\widetilde{A}(s) - A(s) ) \int_0^s \cU(s,r) \cG(r)  \, dr \, ds.
\end{split}
\label{estimate_Kato_II_b}
\end{align}
(\ref{estimate_Kato_II_a}) and (\ref{estimate_Kato_II_b})  imply
(\ref{perturbation}); see \cite{Kato_hyp_2} for details.

We need to estimate the difference 
$(\widetilde{A}(t) - A(t)) z(t)$ in (\ref{perturbation}). 
For this, we point out that the operators $\cZ_0$, $\cZ_5$, $\cZ_7$,
and
$\cZ_9$ that figure in the definition of $A$ 
do not depend on $f$, being therefore  the same for $\widetilde{A}(t)$ 
and $A(t)$. Hence, they cancel out in the  difference
$\widetilde{A}(t) - A(t)$.

Continuing we have
\begin{align}
\begin{split}
\p ( \widetilde{A}(t) - A(t) )z \p^2_{0,\partial} 
& =
(  (\widetilde{A}(t) - A(t) )z, (\widetilde{A}(t) - A(t))z )_{0, \partial}
\\
&
=( (\widetilde{A}(t) - A(t))^* ( \widetilde{A}(t) - A(t) )z, z )_{0, \partial}
\\
& \leq 
\p (\widetilde{A}(t) - A(t))^* ( \widetilde{A}(t) - A(t) )z \p_{-r,\partial}
\p z \p_{r, \partial}
\end{split}
\label{estimate_difference_As_prel}
\end{align}
where $(\widetilde{A}(t) - A(t))^*$ is the adjoint of 
$\widetilde{A}(t) - A(t) $ in $H^0_0(\partial \Om)$.
$\widetilde{A}^*$ and $A^*$
are pseudo-differential operators or order $\frac{3}{2}$ whose
coefficients depend on at most four derivatives of $f$, 
and
 in the last step we used the generalized Cauchy-Schwarz inequality
$(a,b)_0 \leq \p a \p_{-r} \p b \p_r$, with $\p \cdot \p_{-r}$
denoting the negative  Sobolev norm, and $r \leq s+\frac{1}{2}$ a number that
will be conveniently chosen.  

Since  $(\widetilde{A}(t) - A(t))^*$ involves at most four derivatives 
of $f$, $f \in H^{s+2}(\partial \Om)$,
and $s > \frac{n}{2} + 2 = \frac{n-1}{2} + \frac{5}{2}$ (so that 
$s -2 > \frac{n-1}{2}$ ), we can apply (\ref{bilinear}) to get
\begin{align}
\begin{split}
\p (\widetilde{A}(t) - A(t))^* ( \widetilde{A}(t) - A(t) )z \p_{-r,\partial} & \leq 
C \sqrt{\kappa} \p \partial^4 f\p_{s-2,\partial} 
\p ( \widetilde{A}(t) - A(t) )z \p_{-r+ \frac{3}{2},\partial} \\
& \leq \frac{C(K_0)}{\sqrt{\kappa}} \p ( \widetilde{A}(t) - A(t) )z \p_{-r+ \frac{3}{2},\partial},
\end{split}
\nonumber
\end{align}
where $\partial^4 f$ symbolically represents terms in at most four
derivatives of $f$, and where we used the fact that 
the $s+2$ norm of $f$ gives a 
$\frac{1}{\kappa}$ factor. Thus
(\ref{estimate_difference_As_prel}) reads
\begin{align}
\begin{split}
\p ( \widetilde{A}(t) - A(t) )z \p^2_{0,\partial} 
& 
\leq 
\frac{C(K_0)}{\sqrt{\kappa}} \p ( \widetilde{A}(t) - A(t) )z \p_{-r+\frac{3}{2},\partial}
\p z \p_{s+\frac{1}{2}, \partial}.
\end{split}
\label{estimate_difference_As}
\end{align}
Recall now how (\ref{first_order_simple}) was obtained from  (\ref{eq_h_bry}). Each term in (\ref{eq_h_bry}) is a pseudo-differential
operator of order at most three, whose coefficients depend on at most
second derivatives of $f$, although this dependence may be non-local
due to the presence of the operator $\Delta_\nu^{-1}\circ \dive$.
If $p(D)h$ is one of such pseudo-differential operators acting
on $h$, the corresponding term in (\ref{first_order_simple}) is
of the  form
\begin{gather}
\frac{1}{\sqrt{\kappa}} p(D)\circ \cS^{-1} (\sqrt{\kappa} \cS h) \equiv 
\frac{1}{\sqrt{\kappa}} p(D)\circ \cS^{-1} z_1,
\nonumber
\end{gather}
and if $p(D)\dot{h}$ is one of the operators that acts 
on $\dot{h}$, the corresponding term in (\ref{first_order_simple}) is
of the form
\begin{gather}
 p(D) \dot{h} \equiv 
 p(D) z_2.
\nonumber
\end{gather}
With these considerations in mind, we proceed to the estimates below.
Recalling  (\ref{psQ_linear}), we have
\begin{align}
\begin{split}
\p \sqrt{\kappa}{\mathcal{Q}^{(3)}(\partial^2 \widetilde{f}, \partial^3)} \cS^{-1} z_1
& - \sqrt{\kappa} \psQ \cS^{-1} z_1 \p_{-r+\frac{3}{2},\partial} 
\\
& = 
\sqrt{\kappa} \p 
( a^{\alpha ij}( D^2 \widetilde{f}, D\widetilde{f}, \widetilde{f} )
-
a^{\alpha ij}( D^2 f, D f,f ) )
\partial_{\alpha ij} \cS^{-1} z_1 \p_{-r+\frac{3}{2},\partial} 
\\
& 
\leq 
C \sqrt{\kappa} \p 
( a^{\alpha ij}( D^2 \widetilde{f}, D\widetilde{f}, \widetilde{f} )
-
a^{\alpha ij}( D^2 f, D f,f ) )
\p_{-r+\frac{3}{2},\partial} \p \partial_{\alpha ij} \cS^{-1} z_1  \p_{s-1,\partial}.
\end{split}
\nonumber
\end{align}
In the last inequality, we used the fact that $z_1 \in H^{s+\frac{1}{2}}_0(\partial \Om)$, 
so that $\partial_{\alpha ij} \cS^{-1} z_1$ belongs to $H^{s-1}_0(\partial \Om)$; 
we also used
the inequality $s > \frac{n}{2} + 2$ so that (\ref{bilinear}) is valid.
As the coefficients $a^{\alpha ij}$ are smooth functions
of their arguments, we conclude
\begin{align}
\begin{split}
\p \sqrt{\kappa}{\mathcal{Q}^{(3)}(\partial^2 \widetilde{f}, \partial^3)} \cS^{-1} z_1
& - \sqrt{\kappa} \psQ \cS^{-1} z_1 \p_{-r+\frac{3}{2},\partial} 
\\
&
\leq 
C(K_0) \sqrt{\kappa} \p  \widetilde{f} - f \p_{-r+\frac{7}{2},\partial}
 \p  z_1  \p_{s+\frac{1}{2},\partial}
\\
&
\leq C(K_0) \sqrt{\kappa} \p  \widetilde{f} - f\p_{-r+\frac{7}{2},\partial},
\end{split}
\nonumber
\end{align}
after using 
\begin{gather}
\p z_1 \p_{s+\frac{1}{2},\partial} \leq C
\label{z_1_bounded}
\end{gather} 
since $z_1 = \sqrt{\kappa} \cS h$, and  
$\p h \p_{s+2,\partial} \leq \frac{K_0}{\sqrt{\kappa}}$.

The other terms are similarly estimated so we get
\begin{align}
\begin{split}
\p ( \widetilde{A}(t) - A(t) )z \p_{-r+\frac{3}{2},\partial} 
\leq \sqrt{\kappa} C(K_0) \p  \widetilde{f} - f\p_{-r+\frac{7}{2},\partial}.
\end{split}
\label{estimate_difference_As_2}
\end{align} 
Then combining (\ref{estimate_difference_As}) and (\ref{estimate_difference_As_2}) we get
\begin{align}
\begin{split}
\p ( \widetilde{A}(t) - A(t) )z \p_{0,\partial} 
&
\leq  C(K_0) 
\sqrt{  \p  \widetilde{f} - f\p_{-r+\frac{7}{2},\partial}}.
\end{split}
\label{estimate_difference_As_3}
\end{align}
Here we have used $\p z \p_{s+\frac{1}{2},\partial} \leq C$, which suffices since
$z_1$ is bounded by (\ref{z_1_bounded}), 
and $\p z_2 \p_{s+\frac{1}{2},\partial}$ is bounded 
by (\ref{estimate_h_dot_cM}) and our choice of $\ell$.
We now choose $r=\frac{7}{2}$ (which is less than $s+\frac{1}{2}$ since 
$s > \frac{3}{2} +2$), so (\ref{estimate_difference_As_3}) gives
\begin{align}
\begin{split}
\p ( \widetilde{A}(t) - A(t) )z \p_{0,\partial} 
& 
\leq  C(K_0)  \sqrt{d(\widetilde{f},f)}.
\end{split}
\label{estimate_difference_As_4}
\end{align}
On the other hand, invoking (\ref{elliptic_norm_equivalent}) with
$s$ replaced by $-\frac{1}{2}$, and 
once again using  $z = (\sqrt{\kappa} \cS h, \dot{h})$,
we get
\begin{align}
\begin{split}
\p \widetilde{z} - z \p_{0,\partial}
& \geq C \sqrt{\kappa} \p \widetilde{h} - h \p_{\frac{3}{2},\partial}
\geq  C \sqrt{\kappa} \p \widetilde{h} - h \p_{0,\partial}.
\end{split}
\label{z_difference_h_difference}
\end{align}
Combining (\ref{perturbation}), (\ref{estimate_difference_As_4}), 
(\ref{z_difference_h_difference}), and recalling the definitions of $h$
and $\widetilde{h}$ we get
\begin{gather}
d(\Phi(\widetilde{f}),\Phi(f)) \leq \frac{C(K_0)}{\sqrt{\kappa}} T \sqrt{ d(\widetilde{f}, f) }.
\label{continuity_Phi}
\end{gather}
This establishes the continuity of the map $\Phi$. 

We now show the pre-compactness of $\Phi(\cM)$. 
Recall that  $f \in C^0([0,T],H^0_0(\partial \Om))$ if 
$f \in \cM$. 
Let $\{ f_\nu \}_{\nu \in \cI} \subset \cM$, $\cI$ an indexing set.
Invoking the Lipschitz condition once more,
\begin{gather}
\p f_\nu(t) - f_\nu(t^\prime) \p_{0,\partial}
\leq \p f_\nu (t) - f_\nu (t^\prime) \p_{s+\frac{1}{2},\partial}
\leq \ell |t - t^\prime|,
\nonumber
\end{gather}
we see that  $\{ f_\nu \}_{\nu \in \cI}$ is equi-continuous as
a family of maps from $[0,T]$ to $H^0_0(\partial \Om)$. Also, for each fixed 
$t$, the set 
\begin{gather}
\{ f_\nu(t) \, | \, \nu \in \cI \}
\nonumber
\end{gather}
has compact closure in the $H^0$-topology in view of the
compact embedding $H^{s+2}_0(\partial \Om) \subset H^0_0(\partial \Om)$
and the bound $\p f_\nu \p_{s+2,\partial} \leq R$.
Combined with the continuity of the functions $f_\nu$, we have thus
verified the conditions to apply the Arzel\`a-Ascoli theorem, and we 
conclude that $\{ f_\nu \}$ has compact closure in 
$C^0([0,T],H^0_0(\partial \Om))$,
and therefore in $\cM$ since $\cM$ is complete. The same then holds for
$\{ \Phi(f_\nu) \}_{\nu \in \cI}$ because of (\ref{continuity_Phi}), i.e., 
the continuity of  $\Phi: \cM \rar \cM$.
This shows that $\Phi(\cM)$ is pre-compact
in $L^\infty([0,T],H^0_0(\partial \Om))$.

We can now invoke theorem \ref{Schauder}, namely, 
the Schauder fixed point theorem, 
to conclude that $\Phi$ has a fixed point in $\cM$, i.e.,
there exists $f_* \in \cM$ such that $\Phi(f_*) = f_*$. This $f_*$ solves
(\ref{eq_f_bry}) with initial conditions $f_*(0) = 0$ and $\dot{f}_*(0) = f_1$.
In view of our choices, we see that 
$f_*$ and $\dot{f}_*$
satisfy the further hypotheses on $f$ and $\dot{f}$ in 
proposition \ref{prop_first_order_estimate} with $K_0 = K_3$ and
$K_0^\prime = \ell$. Therefore, by proposition \ref{prop_linear_eq_h},  we obtain
the desired estimates for $f_*$, $\dot{f}_*$, and $\ddot{f}_*$. 
\end{proof}
 
\subsection{Analysis of $f$ in the interior}
The results of section \ref{section_f_boundary} give us a solution
to (\ref{eq_f_bry}) with appropriate initial conditions. Since 
the operators in (\ref{eq_f_bry}) involve the $\psi$-harmonic extension
(\ref{nl_Dirichlet_mod_f}), for $f$ sufficiently small (i.e., $\kappa$
large), the extension of $f$ to $\Om$ also satisfies (\ref{system_f_nl_Dirichlet}).
We remind the reader that 
(\ref{eq_f_bry}) is (\ref{system_f_v_f_dot_dot}) without the gradient,
or (\ref{new_form_f_eq}),
restricted to the boundary.

As before we can drop the gradient in front
of every term in (\ref{system_f_v_f_dot_dot}) and work modulo constants. This
leads to an evolution equation for $f$ of the form
\begin{align}
\begin{cases}
\ddot{f} + \ccA(v, f)(f) + \ccB( v, f )(\dot{f}) + \ccC( v, \widetilde{q})
= 0
 \,\,\text{ in } \, \,
 \Om, \\
 f(0) = 0, \, \dot{f} = f_1,
 \end{cases} 
\label{f_interior_before_restriction}
\end{align}
where $\ccA$ is a third order pseudo-differential with coefficients depending 
on $v$ and $f$, $\ccB$ is
first order with coefficients depending on $v$ and $f$, and 
$\ccC$ is a lower order operator on $v$ and $\widetilde{q}$.
Here, as in section \ref{section_f_boundary}, $\widetilde{q}$
 will be a given function
which replaces $p_0\circ\widetilde{\eta}$, and $f_0$ and $f_1$ are known functions.

Let $f$ be given by theorem \ref{theo_f_eq}. $f$ is then defined on 
$\Om$ and satisfies (\ref{system_f_nl_Dirichlet}), as stated earlier.
Since (\ref{f_interior_before_restriction}) gives (\ref{eq_f_bry}) on $\partial \Om$,
if we plug $f$ into the left hand side of (\ref{f_interior_before_restriction})
we obtain
\begin{gather}
\ddot{f} + \ccA(v, f)(f) + \ccB( v, f )(\dot{f}) + \ccC( v, \widetilde{q})
= \omega,
\nonumber
\end{gather}
where $\omega$ has the property that $\left. \omega \right|_{\partial \Om}$
is zero in $H^s_0(\partial \Om)$ (recall that we solved (\ref{eq_f_bry}) modulo
constants), so $\omega$ is constant on $\partial \Om$. Therefore,
if we work modulo functions that are constant on the boundary (which
suffices for our purposes), we find that $f$ automatically satisfies
the interior equation (\ref{f_interior_before_restriction}), and thus
(\ref{system_f_v_f_dot_dot}) as well.

\section{Proof of theorem \ref{main_theorem}: existence\label{section_existence}}

In this section we prove the existence part of theorem \ref{main_theorem}.
(recall remark \ref{remark_large_kappa}).
Before doing so, we summarize how the argument will be implemented. 

\subsection{Overview of the argument\label{overview}}

Here we motivate how the iteration yielding 
a solution to (\ref{free_boundary_full}) is implemented, and also fix some notation
for future reference, while following the same notation as above for quantities
that have already been introduced.

Assume we are given a solution to (\ref{free_boundary_full}), so that
 $\eta(t)$ is a curve of volume-preserving embeddings, 
$\dot{\eta} = u \circ \eta$, and $\dive u(t) = 0$. Write
\begin{gather}
u(t) = P u(t) + Q u(t) = P u(t) + \nabla h (t),
\label{u_decomp}
\end{gather}
where $h(t): \eta(t)(\Om) \rar \RR$ is harmonic. Recall that 
$u$ satisfies
\begin{gather}
\frac{\partial u}{\partial t} + \nabla_u u = - \nabla p,
  \text{ in } \IntDom.
\label{u_Eulerian}
\end{gather}
Letting $Pu = w$ and using the fact that 
\begin{gather}
\nabla_{\nabla h} \nabla h = \frac{1}{2} \nabla | \nabla h |^2,
\label{identity_nabla_h}
\end{gather}
we obtain, (after applying $P$ to (\ref{u_Eulerian})),
\begin{gather}
\frac{\partial w}{\partial t} + P (\nabla_u w +
\nabla_w \nabla h)  + \nabla H  =  0,  
  \text{ in } \IntDom,
\nonumber
\end{gather}
where we notice that $\nabla H$ is divergence-free and has normal component equal
to $\langle w, (N\circ \eta)\,\dot{} \circ \eta^{-1} \rangle + \langle \nabla_u w, N\rangle$.
Composing with $\eta$ gives
\begin{align}
\begin{split}
(w\circ \eta)\,\dot{} & 
= ( \frac{\partial w}{\partial t} + \nabla_u w ) \circ \eta \\
& = (  -P(\nabla_w \nabla h) + Q (\nabla_u w) ) \circ \eta + \nabla H \circ \eta.
\end{split}
\label{w_comp_eta_dot}
\end{align}
Letting $z = w\circ \eta$ then gives
\begin{gather}
\dot{z} = Q_\eta( (\nabla_u)_\eta (z) ) - 
P_\eta ( (\nabla_w)_\eta (\nabla h\circ \eta) ) + \nabla H \circ \eta.
\label{z_eq}
\end{gather}
Next, consider $\cD_\mu^s(\Om)$. It is a submanifold of 
$H^s(\Om,\RR^n)$.
Therefore, it has a
normal bundle given by the $L^2$ metric on $H^s(\Omega, \RR^n)$.  This metric
is of course invariant under right composition by elements of $D^s_{\mu}(\Om)$.
A tangent vector to $D^s_{\mu}(\Om)$ at $\beta$ is of the form $v \circ \beta$ where $\dive v =0$ and $v$ is parallel to $\partial \Omega$.
Hence, a normal vector is of the form $\nabla f \circ \beta$.  The
exponential map from the normal bundle to $H^s(\Omega, \RR^n)$
is a diffeomorphism in a neighborhood of $D^s_{\mu}(\Om)$.  
Therefore, if $\eta$ is near
$D^s_{\mu}(\Om)$, then there exist $\beta$ and $\nabla g$ such that 
\begin{gather}
\eta = (\id + \nabla g ) \circ \beta.
\label{eta_decom_exp}
\end{gather}
In other words, decomposition (\ref{def_f_beta}) holds for all $\eta$
sufficiently close to $\cD^s_\mu(\Om)$, although it is important to stress
that in this decomposition, given by the exponential map, 
$g$ need not to satisfy (\ref{eq_f_bry}) or 
(\ref{system_f_v_f_dot_dot})-(\ref{system_f_nl_Dirichlet}), 
nor does it need to be in 
$H^{s+\frac{5}{2}}(\Om)$ as are the solutions constructed in section
\ref{section_f_boundary}.

Now we can describe the iteration. Assume that we are given
a differentiable curve  $\eta(t)$ of $H^s$ embeddings and  $u \in H^s( \eta( \Om) )$.
From (\ref{eta_decom_exp}) we obtain $\beta$, and thus $v$, both in $H^s(\Om)$.
Then we use theorem \ref{theo_f_eq} to solve 
(\ref{eq_f_bry}), or equivalently,  
(\ref{system_f_v_f_dot_dot})-(\ref{system_f_nl_Dirichlet}),  for $f$.
Next, we obtain $h$ by solving
\begin{gather}
\begin{cases}
\Delta h = 0, & \text{ in } \widetilde{\eta}(\Om), \\
\frac{\partial h}{\partial \widetilde{N}} =
\langle (\nabla \dot{f} + D_v \nabla f + v ) \circ \widetilde{\eta}^{-1},
\widetilde{N} \rangle & \text{ on } \partial \widetilde{\eta}(\Om),
\end{cases}
\label{h_equation_Neumann}
\end{gather}
where $\widetilde{\eta} = \id + \nabla f$ as usual and $\widetilde{N}$ 
is the normal to $\widetilde{\eta}(\Om)$. Since 
$\nabla f \in H^{s+\frac{3}{2}}(\Om)$, $\nabla \dot{f}
 \in H^{s}(\Om)$, and $v \in H^s(\Om)$, we find that $\nabla h \in H^s(\Om)$.
Next, solve (\ref{z_eq}), which is an ODE for $z$. Note that 
$\nabla h \in H^s(\Om)$ and $u \in H^s(\Om)$ so 
$z \in H^s(\Om)$. Finally, let the new $\eta$ be
\begin{gather}
\eta(t) = \id + \int_0^t (z + (\nabla h) \circ \widetilde{\eta} \circ \beta ).
\nonumber
\end{gather}
This also gives the new $u$ by  $u = z \circ \beta^{-1}\circ
\widetilde{\eta}^{-1} + \nabla h$.
Using our estimates on $f$, we can then show that this iteration has
a fixed point. 

\subsection{Successive approximations\label{section_suc_app}} 
Here we carry out the fixed point argument sketched above.
Denote by
$\Emb^s(\Om,\RR^3) \equiv \Emb^s(\Om)$ the space of $H^s$
embeddings of $\Om$ into $\RR^3$ (not necessarily volume-preserving). \\

\textbf{Induction hypothesis}
{\it (step $n$, at which the $(n+1)$st quantities will be determined).
Assume inductively that we are given 
\begin{gather}
\eta_n \in C^2([0,T],\text{\normalfont Emb}^s(\Om)) \cap C^1([0,T], H^s(\Om)) \cap C^2([0,T],H^{s-\frac{3}{2}}
(\Om)),
\nonumber
\end{gather}
satisfying $\eta_n(0) = \id$ and $\dot{\eta}_n(0) = u_0$, where $u_0$ is the
given initial velocity for (\ref{free_boundary_full}). Suppose that 
\begin{subequations}
\begin{align}
& \p \eta_n - \id  \p_s \leq R_s, 
\label{induction_bound_1} \\
& \p \dot{\eta}_n(0) \p_s  = \p u_0 \p_s  < \Rin 
\label{induction_bound_2} \\
& \p \dot{\eta}_n \p_s \leq \Rin, 
\label{induction_bound_3}  \\
& \p \ddot{\eta}_n(0) \p_{s-\frac{3}{2}} \leq \ddot{R}_s^0 < \Rin,
\label{induction_bound_4}  \\
& \p \ddot{\eta}_n \p_{s-\frac{3}{2}}  \leq \Rin,
\label{induction_bound_5} 
\end{align}
\end{subequations}
for some constants $R_s$, 
$\ddot{R}_s^0$, and $\Rin$ which will be suitably chosen.
 Assume also that
\begin{gather}
\dot{\eta}_n = \widehat{u}_n \circ (\id + \nabla f_n )\circ \beta_n,
\label{dot_eta_n_ind}
\end{gather}
with  $\widehat{u}_n$ divergence-free, 
$\widehat{u}_n \in H^s( (\id + \nabla f_n )\circ \beta_n(\Om) )$,
$\dot{\widehat{u}}_n \in H^{s-\frac{3}{2}}( (\id + \nabla f_n )\circ \beta_n(\Om) )$, 
where $\beta_n$ 
is a $C^1$-curve
$\beta_n: [0,T] \rar \cD_\mu^s(\Om)$ which 
also satisfies $\ddot{\beta}_n \in H^{s-\frac{3}{2}}(\Om)$, 
and 
\begin{gather}
\nabla f_n \in C^0([0,T], H^{s+\frac{3}{2}}(\Om)) \cap
C^1([0,T], H^{s}(\Om) )\cap 
C^2([0,T], H^{s-\frac{3}{2}}(\Om) ).
\nonumber
\end{gather}
Let $v_n$ be given by $\dot{\beta}_n = v_n \circ \beta_n$.
Assume further that $f_n$ 
is obtained from theorem \ref{theo_f_eq}. Notice that in employing 
theorem \ref{theo_f_eq}, $v$ and $\widetilde{q}_0$ are needed. 
We take $v_{n}$ as $v$, and let $\widetilde{q}_0$ be determined
from $\eta_n$ in the inductive process, as shown below.
Let $\nabla h_n$ and $w_n$ be the gradient and divergence free parts
of $\widehat{u}_n$, and let $z_n = w_n \circ (\id + \nabla f_n) \circ \beta_n$.
Finally, assume that 
\begin{gather}
\p z_n \p_s, \p w_n \p_s, \p \nabla h_n \p_s, 
 \p \widehat{u}_n \p_s, \p \beta_n \p_s, \p \dot{\beta}_n \p_s, \p v_n \p_s
\leq 
\Rin
\label{ind_bounds_1}
\end{gather}
and
\begin{gather}
\p \dot{z}_n \p_{s-1}, 
\p \dot{w}_n \p_{s-\frac{3}{2}},
\p \nabla \dot{h}_n \p_{s-\frac{3}{2}},
\p \nabla \dot{f}_n \p_{s}, 
 \p \dot{\widehat{u}}_n \p_{s-\frac{3}{2}},
\p \ddot{\beta}_n \p_{s-\frac{3}{2}}, \p \dot{v}_n \p_{s-\frac{3}{2}} \leq \Rin\, ,
\label{ind_bounds_2}
\end{gather}
where (\ref{ind_bounds_1}) and (\ref{ind_bounds_2}) mean that each one of those
quantities is bounded by $\Rin$.
} 
\begin{remark}
The reason for introducing $\widehat{u}_n$ is that $\eta_n$ may not be
volume preserving, so that $u_n$ given by $\dot{\eta}_n = u_n \circ \eta_n$
may not be divergence-free. We need, however, a velocity that is divergence
free to get the correct regularity for the pressure, since it will involve
$\dive (\nabla_u u)$ and we need this to depend on at most one derivative of
$u$. 
\end{remark}
We shall show that if $T$ and the constants $R_s$, $\Rin$, etc.
 are correctly chosen,
and $\kappa$ is large, 
one can construct $\eta_{n+1}$ and the corresponding quantities
satisfying the above conditions. This will give the desired sequence.
For simplicity, we divide the procedure into several steps.\\

\noindent \emph{Step $(n1)$.} 
We begin by noting that 
$\cD_\mu^s(\Om)$ has a smooth normal bundle inside $H^s(\Om,\RR^3)$,
with smooth exponential map, both with respect to the $L^2$-metric
\cite{EM}.
Thus, given the curve of embeddings 
$\eta_n$, if $R_s$ is sufficiently 
small the exponential map gives
\begin{gather}
\eta_n = (\id  + \nabla g_n) \circ \ga_n,
\nonumber
\end{gather}
with $\nabla g_n \in H^s(\Om)$ and $\ga_n \in \cD_\mu^s(\Om)$
having the same regularity properties as $\eta_n$, 
i.e., 
$\nabla g_n,\ga_n \in C^1([0,T], H^s(\Om)) \cap C^2([0,T],H^{s-\frac{3}{2}}(\Om))$.
In fact, the normal bundle to $D_{\mu}^s(\Om)$ at $\eta$ 
is \linebreak
 $\nabla_{\eta} \Delta^{-1}_{\eta} \dive_{\eta}( H^s(\Omega, R^n))$; 
$\dive_{\eta}:H^s \mapsto H^{s-1}$ and
$\nabla_{\eta} \Delta^{-1}_{\eta}: H^{s-1} \mapsto H^s$ are both smooth in $\eta$,
as shown in \cite{EM}. Thus the normal bundle is 
smooth in the $H^s$ topology even though it is normal only in the $L^2$ sense.
Therefore, the smooth exponential map on the normal bundle makes sense and it 
is a diffeomorphism in a neighborhood of the zero-section, and from this it
follows that  $\nabla g_n $ and $\ga_n$ do 
have the stated regularity.
It also follows that 
$\nabla g_n(0) = 0$ and $\ga_n(0) = \id$.
Define $\beta_{n+1} = \ga_n$. This also gives $v_{n+1}$ by 
$\dot{\beta}_{n+1} = v_{n+1} \circ \beta_{n+1}$. Since $\beta_{n+1}(0) = \id$,
given $R_s$, 
if $T$ is small we will have $\p \beta_{n+1} - \id \p_s \leq R_s$.
Differentiating $\eta_n$ in time, evaluating at zero, and applying 
$P$ (see (\ref{dot_eta_f_beta})), one obtains
\begin{gather}
\dot{\beta}_{n+1}(0) = v_{n+1}(0) = P \dot{\eta}_n(0) = P u_0.
\label{v_n_plus_1_t_zero}
\end{gather}
By the bound (\ref{induction_bound_2}) and the continuity in $t$
of $v_{n+1}$, we see that if $\Rin$ is taken sufficiently large
one has $\p v_{n+1} \p_s \leq \Rin$ and 
 $\p \dot{\beta}_{n+1} \p_s \leq \Rin$. Differentiating 
 $\eta_n$ twice in time, evaluating at zero, and applying $P$ produces
(see (\ref{equation_ddot_nabla_f}))
\begin{gather}
\dot{v}_{n+1}(0) + P(\nabla_{v_{n+1}(0)} v_{n+1}(0) )
= P(\ddot{\eta}_n(0)) - 2PD_{v_{n+1}(0)} \nabla \dot{g}_n(0).
\nonumber
\end{gather}
In view of (\ref{induction_bound_2}) again, we can bound 
$\nabla \dot{g}_n(0)$, and then, using (\ref{induction_bound_4})
and the continuity in time of the quantities involved, we obtain
$\p \dot{v}_{n+1} \p_{s-\frac{3}{2}} \leq \Rin$ and 
 $\p \ddot{\beta}_{n+1} \p_{s-\frac{3}{2}} \leq \Rin$, provided that $\Rin$ is
 chosen very large. \\

\noindent \emph{Step $(n2)$.}  Define $p_{0,n+1}$ by 
solving\footnote{Note that the subscript zero in $p_{0,n+1}$  is used to indicate that $p_{0,n+1}$ is the interior 
pressure and should not be confused with the step of the iteration which is $n$.}:
\begin{gather}
\begin{cases}
\Delta p_{0,n+1} = - \dive (\nabla_{\widehat{u}_n }\widehat{u}_n  ), & 
\text{ on } (\id + \nabla f_n )\circ \beta_n(\Om), \\
p_{0,n+1} = 0, & \text{ on }
\partial (\id + \nabla f_n )\circ \beta_n(\Om).
\end{cases}
\label{p_0_equation_inducation}
\end{gather}
Since $\widehat{u}_n$ is divergence-free, 
$\dive (\nabla_{\widehat{u}_n }\widehat{u}_n  )$ is in $H^{s-1}$, and thus
it is expected that $p_{0,n+1}$ is $H^{s+1}$-regular. 
This has to be verified, however, since one is solving a Dirichlet problem on the domain
$\partial (\id + \nabla f_n )\circ \beta_n(\Om)
= \partial (\id + \nabla f_n ) (\Om)$, which is not smooth
(compare the ensuing argument with that involving $\cH_{\widetilde{\eta}}$ in 
proposition
\ref{prop_first_order}).
Let $\widetilde{\eta}_n = (\id + \nabla f_n )$, and consider the operator
$\Delta_{\widetilde{\eta}_n}$ which acts on functions defined over $\Om$
(see notation \ref{notation_sub}). If $G$ is defined over $\Om$, a computation 
gives
\begin{gather}
\Delta_{\widetilde{\eta}_n} G
= ((D \widetilde{\eta}_n )^{-1})^\al_\be ( \partial_{\al \ga} G  ) 
((D \widetilde{\eta}_n )^{-1})^{\ga}_{\be} 
+ 
\partial_\al 
((D \widetilde{\eta}_n )^{-1})^{\al}_{\be} \partial_\be G, 
\nonumber
\end{gather}
where $((D \widetilde{\eta}_n )^{-1})^\al_\be$ are the entries of $(D \widetilde{\eta}_n )^{-1}$.
Thus, $\Delta_{\widetilde{\eta}_n}$ has the form
\begin{gather}
\Delta_{\widetilde{\eta}_n} G = a^{\al\be}\partial_{\al\be} G + b^\al \partial_\al G.
\nonumber
\end{gather}
Since $\nabla f_n \in H^{s+\frac{3}{2}}(\Om)$ we conclude that 
$a^{\al\be}$ is in $H^{s+\frac{1}{2}}(\Om,\RR^{3^2})$ and
$b^\al$ is in $H^{s-\frac{1}{2}}(\Om, \RR^3)$. Furthermore, 
since $\nabla f_n$ is small in $H^{s+\frac{3}{2}}(\Om)$ and $s> \frac{3}{2} + 2$, 
we find that $(a^{\al\be})$ is positive definite.
We conclude that $\Delta_{\widetilde{\eta}_n}$ is an elliptic operator
that takes $H^{s+1}(\Om)$ into $H^{s-1}(\Om)$ and, furthermore, that
it gives rise to an isomorphism $\Delta_{\widetilde{\eta}_n}: H_0^{s+1}(\Om)
\rar H_0^{s-1}(\Om)$. In particular, the corresponding Dirichlet
problem for $\Delta_{\widetilde{\eta}_n}$ is uniquely solvable.
But finding $p_{0,n+1}$ in (\ref{p_0_equation_inducation})
 is equivalent to solving
\begin{gather}
\begin{cases}
\Delta_{\widetilde{\eta}_n} ( p_{0,n+1} \circ \widetilde{\eta}_n )
 = - \dive (\nabla_{\widehat{u}_n }\widehat{u}_n  ) \circ \widetilde{\eta}_n, & 
\text{ in } \Om, \\
p_{0,n+1}\circ \widetilde{\eta}_n  = 0, & \text{ on }
\partial \Om.
\end{cases}
\nonumber
\end{gather}
From the above, we know that this has a unique solution 
$\widetilde{q}_{0,n+1} \in H^{s+1}(\Om)$, and thus we obtain
the desired $p_{0,n+1} = \widetilde{q}_{0,n+1} \circ (\widetilde{\eta}_n)^{-1}$ 
in $H^{s+1}((\id + \nabla f_n )\circ \beta_n(\Om))$.

Next, upon differentiating, we obtain
\begin{gather}
\Delta_{\widetilde{\eta}_n} \dot{\widetilde{q}}_{0,n+1}
 = - (\dive (\nabla_{\widehat{u}_n }\widehat{u}_n  ) \circ \widetilde{\eta}_n)\,\dot{}
 -((\Delta_{\widetilde{\eta}_n})\, \dot{} \, ) \, \widetilde{q}_{0,n+1},
\label{elliptic_eq_q_0_dot}
\end{gather}
and a standard computation shows that 
\begin{gather}
(\Delta_{\widetilde{\eta}_n})\, \dot{} = [ \nabla_{\widehat{u}_n}, \Delta ]_{\widetilde{\eta}_n}.
\label{elliptic_eq_q_0_dot_comm}
\end{gather}
Combining (\ref{elliptic_eq_q_0_dot}) and (\ref{elliptic_eq_q_0_dot_comm}), and
using the ellipticity of $\Delta_{\widetilde{\eta}_n}$, we obtain
$\dot{\widetilde{q}}_{0,n+1} \in H^{s-\frac{1}{2}}(\Om,\RR))$, by using the regularity of the quantities on the right hand side 
of (\ref{elliptic_eq_q_0_dot}). In fact, we know more: from the induction
hypothesis and the above constructions, we conclude that 
\begin{gather}
\widetilde{q}_{0,n+1} \in 
C^0 ([0,T],H^{s+1}(\Om,\RR))
\cap
C^1 ([0,T],H^{s-\frac{1}{2}}(\Om,\RR)).
\nonumber
\end{gather}
The ellipticity of $\Delta_{\widetilde{\eta}_n}$ 
also implies that 
$\p \widetilde{q}_{0,n+1} \p_{s+1}$ can be bounded in terms of $\p \widehat{u}_n \p_{s}$ and
$\p \nabla f_n \p_{s+\frac{3}{2}}$, and 
that  $\p \dot{\widetilde{q}}_{0,n+1} \p_{s-\frac{1}{2}}$ is bounded in terms of
$\p \widehat{u}_n \p_{s}$, $\p \widehat{u}_n \p_{s-\frac{3}{2}}$, 
$\p \nabla f_n \p_{s+\frac{3}{2}}$ and $\p \nabla \dot{f}_n \p_{s}$.
Such bounds hold in particular at $t=0$, when they are  given
in terms of the quantities just mentioned, but now evaluated at time zero. Thus,
as before, continuity in $t$ gives a bound on 
$\p \widetilde{q}_{0,n+1} \p_{s+1}$ and $\p \dot{\widetilde{q}}_{0,n+1} \p_{s+1}$ 
in terms of $\Rin$, provided that $\Rin$ is sufficiently large, 
i.e., 
 $\p \widetilde{q}_{0,n+1} \p_{s+1} \leq \Rin$ and 
 $\p \dot{\widetilde{q}}_{0,n+1}\p_{s-\frac{1}{2}} \leq  \Rin$. 
Invoking  (\ref{Sobolev_composition}) produces 
bounds for  $\p p_{0,n+1} \p_{s+1}$ and $\p \dot{p}_{0,n+1} \p_{s+1}$
in terms of $\Rin$ as well. \\

\noindent \emph{Step $(n3).$} With $v_{n+1}$ and $\widetilde{q}_{0,n+1}$
obtained above, we use theorem \ref{theo_f_eq} to solve the $f$-equation with initial conditions 
$f(0) = 0$ and $\dot{f}(0) = \left. \Delta_\nu^{-1} \dive u_0 \right|_{\partial \Om}$ 
(recall that $f$ is determined up to constants), obtaining $f_{n+1}$. 
In doing so, we need to assure that the initial condition 
$\dot{f}(0)$ satisfies (\ref{f_1_hypothesis}). This is the
case if $\p Q u_0 \p_s$ is taken sufficiently small, as assumed in theorem
\ref{main_theorem}.
The bounds obtained for $v_{n+1}$ and $\widetilde{q}_{0,n+1}$
determine the constant $K_4$ in that theorem. Taking $\Rin$ larger if necessary, 
we can take $K_4 = \Rin$. Note also that $J(\id + \nabla f_{n+1}) = 1$ and
$\id + \nabla f_{n+1}$ is an embedding. 

\begin{remark}
Notice that theorem \ref{theo_f_eq} does not say anything about the uniqueness of $f_{n+1}$,
thus at this point we let $f_{n+1}$ be any of the (possibly more than one) solutions
given by that theorem. We will eventually show that uniqueness does hold for the desired equation, i.e.,
(\ref{free_boundary_full}).
\end{remark}

\noindent \emph{Step $(n4).$} Obtain $h_{n+1}$ solving
\begin{gather}
\begin{cases}
\Delta h_{n+1} = 0, & \text{ in } (\id + \nabla f_{n+1})(\Om), \\
\frac{\partial h_{n+1}}{\partial \widetilde{N}_{n+1}} =
\langle (\nabla \dot{f}_{n+1} + D_{v_{n+1}} \nabla f_{n+1} + v_{n+1} ) \circ 
(\id + \nabla f_{n+1})^{-1},
\widetilde{N}_{n+1} \rangle & \text{ on } \partial(\id + \nabla f_{n+1}) (\Om),
\end{cases}
\nonumber
\end{gather}
where $\widetilde{N}_{n+1}$ is the unit normal to 
$\partial(\id + \nabla f_{n+1}) (\Om)$. This gives
$\nabla h_{n+1} \in H^s( (\id + \nabla f_{n+1}) (\Om) )$ and
$\nabla \dot{h}_{n+1} \in H^{s-\frac{3}{2}}( (\id + \nabla f_{n+1}) (\Om) )$.
We argue as above to conclude that 
$\p \nabla h_{n+1} \p_s$ and 
$\p \nabla \dot{h}_{n+1} \p_{s-\frac{3}{2}}$ are bounded by $\Rin$.
Here, again,  in producing the estimate we use elliptic theory, so some of the 
constants involved depend on the domain, namely, on $(\id + \nabla f_{n+1}) (\Om)$,
but these constants are controlled as before. \\

\noindent \emph{Step $(n5).$} Set $\overline{\eta}_{n+1} = 
(\id + \nabla f_{n+1}) \circ \beta_{n+1}$. By construction this is a volume-preserving embedding, so the velocity $\overline{u}_{n+1}$ given by
$\dot{\overline{\eta}}_{n+1} = \overline{u}_{n+1} \circ \overline{\eta}_{n+1}$
is divergence-free and has the same regularity and bounds as $\widehat{u}_n$. However
$\overline{\eta}_{n+1}$
is not yet the embedding we are seeking to conclude the $n$-th step. The reason
to introduce it is to obtain $\overline{u}_{n+1}$, since it will enter in the
equation for $z$ below, but it has to be defined on $(\id + \nabla f_{n+1})
\circ \be_{n+1}(\Om)$ (on which $\widehat{u}_n$ is not defined). Using
as input $\nabla h_{n+1}$, $\overline{u}_{n+1}$, $\nabla f_{n+1}$, and
$\be_{n+1}$, and the initial condition $P u_0$,
consider the following equation for $z_{n+1}$ 
(or, equivalently, for $w_{n+1}$; compare with 
(\ref{w_comp_eta_dot}) and (\ref{z_eq})): 
\begin{align}
\begin{split}
\dot{z}_{n+1} & = Q_{\overline{\eta}_{n+1}}
( (\nabla_{\overline{u}_{n+1}} )_{\overline{\eta}_{n+1}} (z_{n+1}) ) - 
P_{\overline{\eta}_{n+1}}
 ( (\nabla_{z_{n+1} \circ \overline{\eta}_{n+1}^{-1}})_{\overline{\eta}_{n+1}} \\
 & +
(\nabla h_{n+1}\circ {\overline{\eta}_{n+1}}) ) + \nabla H_{n+1} \circ \overline{\eta}_{n+1},
\end{split}
\label{z_equation_iteration}
\end{align}
with initial condition $z(0) = P u_0$, where $u_0$ is the initial velocity given in the 
statement of theorem \ref{main_theorem},
and $\nabla H_{n+1}$ is divergence-free and has normal component equal
to $\langle w_{n+1}, (\overline{N}_{n+1} \circ \overline{\eta}_{n+1})\,\dot{} \circ 
\overline{\eta}^{-1}_{n+1} \rangle + \langle\nabla_{\overline{u}_{n+1}} w_{n+1}, \overline{N}_{n+1}\rangle$.

We shall first show that 
(\ref{z_equation_iteration}) has a solution in $H^{s-1}$, and then that $z_{n+1}$
in indeed in $H^s$.

Although the boundary of $\overline{\eta}_{n+1}(\Om)$ 
is not smooth, it is 
sufficiently regular to guarantee that the operators $P_{\overline{\eta}_{n+1}}$ and $Q_{\overline{\eta}_{n+1}}$
are bounded on $H^s(\overline{\eta}_{n+1}(\Om))$.
In fact,  
$\nabla f_{n+1} \in H^{s+\frac{3}{2}}(\Om)$ and $\be_{n+1} \in \cD_\mu^s(\Om)$. Since $\be_{n+1}(\partial \Om) =
\partial \Om$, $\overline{\eta}_{n+1}(\partial \Om) = (\id + \nabla f_{n+1}) (\partial \Om)$. 
Thus, because $\nabla f_{n+1}
\in H^{s+\frac{3}{2}}(\Om)$, $\partial \overline{\eta}_{n+1}(\Om)$ can be written locally 
as the graph of an $H^{s+1}(\partial \Om)$ function and, therefore, the normal to $\partial 
\overline{\eta}_{n+1}(\Om)$
is $H^s$-regular. 

The term $P \nabla_{w_{n+1}} \nabla h_{n+1} $
 is in $H^{s-1}$ if $w_{n+1}$ is in 
$H^{s-1}$ because $\nabla h$ is in $H^s$, and $P$ is an operator of order 
zero. The term 
$Q \nabla_{\overline{u}_{n+1}} w_{n+1}$ is in $H^{s-1}$ if
$w_{n+1}$ is in $H^{s-1}$ and $P w_{n+1} = w_{n+1}$. Indeed,
if $w_{n+1}$ is in the image of $P$ we can write
\begin{gather}
Q \nabla_{\overline{u}_{n+1}} w_{n+1}
= [ Q, \nabla_{\overline{u}_{n+1}} ] w_{n+1}.
\nonumber
\end{gather}
The commutator $ [ Q, \nabla_{\overline{u}_{n+1}} ] $ is a zeroth
order operator depending on first derivatives of $\overline{u}_{n+1}$;
since $\overline{u}_{n+1}$ is $H^s$-regular, we obtain that 
$Q \nabla_{\overline{u}_{n+1}} w_{n+1}$ is in $H^{s-1}$.

Therefore, (\ref{z_equation_iteration}) can be viewed as a ODE
for $z_{n+1}$ in 
$P_{\overline{\eta}_{n+1}}( P H^{s-1} ( \overline{\eta}_{n+1} (\Om) ) )$,
i.e., the space of $H^{s-1}$ vector fields over $\Om$ of the form $X = W \circ
\overline{\eta}_{n+1}$ with $PW = W$. Indeed, an element 
$X \in P_{\overline{\eta}_{n+1}}( P H^{s-1} ( \overline{\eta}_{n+1} (\Om) ) )$
is of the form $X = P_{\overline{\eta}_{n+1}} Y$, with $Y \in 
 P H^{s-1} ( \overline{\eta}_{n+1} (\Om) )$. But then $Y = W \circ
 \overline{\eta}_{n+1}$, with $PW = W$, thus
 \begin{align}
 \begin{split}
 X & = P_{\overline{\eta}_{n+1}} Y
  = ( P ( Y \circ \overline{\eta}_{n+1}^{-1} ) ) \circ \overline{\eta}_{n+1}
= (P W ) \circ \overline{\eta}_{n+1}. 
 \end{split}
 \nonumber
 \end{align}
The right hand side of (\ref{z_equation_iteration}) depends 
linearly on $w_{n+1}$, and because composition on the right is a smooth
map (see, e.g.,  \cite{E1, EM, P}), we conclude that this ODE has a 
$H^{s-1}$ solution
$z_{n+1}$ for a small time interval $T$. Notice that $\dot{z}_{n+1}$
is also in $H^{s-1}$.

Letting $w_{n+1} = z_{n+1} \circ \overline{\eta}_{n+1}^{-1}$, 
it follows that $w_{n+1}$ is in $H^{s-1}$, that $P w_{n+1} = w_{n+1}$, and 
that $w_{n+1}$  satisfies
\begin{gather}
\frac{\partial w_{n+1}}{\partial t} 
+ P (\nabla_{\overline{u}_{n+1}}  w_{n+1} + \nabla_{w_{n+1}} 
\nabla h_{n+1} ) 
+ \nabla H_{n+1} \circ \overline{\eta}_{n+1}
= 0
\, \text{ in } \,
\underset{0 \leq t \leq T}{\bigcup} \{t\} \times \overline{\eta}_{n+1}(\Om).
\label{w_eq_iteration}
\end{gather}
 
We now show that $w_{n+1}$ is in $H^s$.
 In order to do so, suppose first
that 
 $\nabla h_{n+1}$, $\overline{u}_{n+1}$, 
$\overline{\eta}_{n+1}$, and $ u_0$ belong to $H^{N+1}$, where $N$ is some big number
larger than $s$. The above ODE argument then produces
$z_{n+1}$ and $w_{n+1}$ in $H^N$. We shall establish the following
a priori bound 
\begin{align}
\begin{split}
\p w_{n+1} \p_s & \leq C
( \p w_{n+1} \p_0 + 
(1 + \p \overline{\eta}_{n+1}^{-1} \p_s^s )
 \p u_0 \p_s 
 )
 \\
 &
 \times 
e^{C
(1 + \p \overline{\eta}_{n+1}^{-1} \p_s^s)
\int_0^t (1+ \p \overline{\eta}_{n+1} \p_s^s )(\p \overline{u}_{n+1} \p_s + \p \nabla h_{n+1} \p_s )}.
\end{split}
\label{a_priori_w}
\end{align}
Once (\ref{a_priori_w}) is established, one can take a sequence 
$\{ \nabla h_{n+1,j}, \overline{u}_{n+1,j},
\overline{\eta}_{n+1,j}, u_{0,j} \}_{j=1}^\infty$
of 
$H^{N+1}$
functions converging in $H^s$ to the original $\nabla h_{n+1}$, $\overline{u}_{n+1}$, 
$\overline{\eta}_{n+1}$, $u_0$, and then (\ref{a_priori_w}) implies 
that the corresponding $H^N$ solutions $w_{n+1,j}$ converge in $H^{s-1}$
to an element $w_{n+1}$ that is in fact in $H^s$; this $w_{n+1}$ will indeed be a solution of
(\ref{w_eq_iteration}) because we already know that is has a solution
in $H^{s-1}$.

Let $y = \curl w_{n+1}$. Taking the $\curl$ 
of (\ref{w_eq_iteration}) and using the fact that $\curl \nabla = 0$ gives
\begin{gather}
\frac{\partial y}{\partial t} 
+ \nabla_{\overline{u}_{n+1}}  y 
+
[ \curl, \nabla_{\overline{u}_{n+1}}] w_{n+1} 
+ [ \curl,  \nabla_{w_{n+1}} ] \nabla h_{n+1} = 0
\, \text{ in } \,
\underset{0 \leq t \leq T}{\bigcup} \{t\} \times \overline{\eta}_{n+1}(\Om).
\label{curl_w_eq_1}
\end{gather}
Here we used that fact that for any vector field $X$, $\curl P X = \curl X$ 
since $\curl Q = 0$. Composing (\ref{curl_w_eq_1}) with $\overline{\eta}_{n+1}$
 leads to
\begin{gather}
(y \circ \overline{\eta}_{n+1})\,\dot{}
= - 
( [ \curl, \nabla_{\overline{u}_{n+1}}] w_{n+1} ) \circ \overline{\eta}_{n+1}
-( [ \curl,  \nabla_{w_{n+1}} ] \nabla h_{n+1} ) \circ \overline{\eta}_{n+1}.
\nonumber
\end{gather}
Thus
\begin{align}
\begin{split}
y \circ \overline{\eta}_{n+1}
& = (y \circ \overline{\eta}_{n+1})(0) - 
\int_0^t ( [ \curl, \nabla_{\overline{u}_{n+1}}] w_{n+1} ) \circ \overline{\eta}_{n+1}
\\
& - \int_0^t 
( [ \curl,  \nabla_{w_{n+1}} ] \nabla h_{n+1} )
\circ \overline{\eta}_{n+1}.
\end{split}
\label{curl_w_eq_2}
\end{align}
$[ \curl, \nabla_{\overline{u}_{n+1}}]$ is a first order operator depending
on first derivatives of $\overline{u}_{n+1}$. Thus, using (\ref{Sobolev_composition}) we derive
\begin{gather}
\p ( [ \curl, \nabla_{\overline{u}_{n+1}}] w_{n+1} ) \circ \overline{\eta}_{n+1}
\p_{s-1}  
\leq C \p \overline{u}_{n+1} \p_s \p w_{n+1} \p_s(1 
+ \p \overline{\eta}_{n+1} \p_{s}^s ),
\label{comm_w_1}
\end{gather}
 Similarly,
$ [ \curl,  \nabla_{w_{n+1}} ]$ is a first order operator depending
on derivatives of $w_{n+1}$, hence
\begin{gather}
\p ( [ \curl,  \nabla_{w_{n+1}} ] \nabla h_{n+1} ) \circ \overline{\eta}_{n+1}
\p_{s-1}  
\leq C \p \nabla h_{n+1} \p_s \p w_{n+1} \p_s(1 
+ \p \overline{\eta}_{n+1} \p_{s}^s ).
\label{comm_w_2}
\end{gather}
Combining (\ref{curl_w_eq_2}), (\ref{comm_w_1}), and (\ref{comm_w_2})
produces
\begin{align}
\begin{split}
\p y \circ \overline{\eta}_{n+1} \p_{s-1} &\leq
C \p u_0 \p_s 
 + C \int_0^t
(1+ \p \overline{\eta}_{n+1} \p_s^s ) ( \p \overline{u}_{n+1} \p_s + 
\p \nabla h_{n+1} \p_s ) \p w_{n+1} \p_s,
\end{split}
\label{curl_w_eq_3}
\end{align}
where we used that $ (y \circ \overline{\eta}_{n+1})(0) = y(0)
= \curl u_0$. But
\begin{gather}
\p \curl w_{n+1} \p_{s-1} = 
\p y \p_{s-1} = \p y \circ \overline{\eta}_{n+1} \circ \overline{\eta}_{n+1}^{-1} 
\p_{s-1} \leq
C(1 + \p \overline{\eta}_{n+1}^{-1} \p_s^s ) \p y \circ \overline{\eta}_{n+1} \p_{s-1},
\nonumber
\end{gather}
which combined with (\ref{curl_w_eq_3}) gives
\begin{align}
\begin{split}
\p \curl w_{n+1} \p_{s-1} &\leq
C(1 + \p \overline{\eta}_{n+1}^{-1} \p_s^s )
 \p u_0 \p_s 
\\
& + C(1 + \p \overline{\eta}_{n+1}^{-1} \p_s^s )
\int_0^t
(1+ \p \overline{\eta}_{n+1} \p_s^s )
 ( \p \overline{u}_{n+1} \p_s + 
\p \nabla h_{n+1} \p_s ) \p w_{n+1} \p_s.
\end{split}
\label{curl_w_eq_4}
\end{align}
We now use  (\ref{div-curl-estimate})
to estimate $w_{n+1}$ in $H^s$, noting that the 
terms $\dive w_{n+1}$ and $\langle w_{n+1}, N \rangle$,
where $N$ is the normal to $\partial \overline{\eta}_{n+1}(\Om)$,
do not contribute because $P w_{n+1} = w_{n+1}$.
We also note that we are allowed to invoke (\ref{div-curl-estimate}) because
$\overline{\eta}_{n+1}$ is in $H^s$ and $s > \frac{3}{2} + 2$.
Thus, (\ref{div-curl-estimate}) and (\ref{curl_w_eq_4}) give
\begin{align}
\begin{split}
\p w_{n+1} \p_{s} &\leq
C \p w_{n+1} \p_0 + 
C(1 + \p \overline{\eta}_{n+1}^{-1} \p_s^s )
 \p u_0 \p_s 
\\
& + C(1 + \p \overline{\eta}_{n+1}^{-1} \p_s^s )
\int_0^t (1+ \p \overline{\eta}_{n+1} \p_s^s )
 ( \p \overline{u}_{n+1} \p_s + 
\p \nabla h_{n+1} \p_s ) \p w_{n+1} \p_s.
\end{split}
\nonumber
\end{align}
Iterating this inequality now produces (\ref{a_priori_w}).
Note that $w_{n+1} \in H^s$ also gives $z_{n+1}$ in $H^s$.
As before, one gets a bound on  
$\p z_{n+1} \p_s$ and 
$\p  \dot{z}_{n+1} \p_{s-1}$ in terms of $\Rin$. This finishes
step $(n5)$. \\

Now that we have $z_{n+1}$ (or $w_{n+1}$), $\nabla h_{n+1}$, $\nabla f_{n+1}$,
and $\beta_{n+1}$, we define
\begin{gather}
\eta_{n+1} = 
\id + \int_0^t (z_{n+1} + \nabla h_{n+1} \circ (\id + \nabla f_{n+1} ) 
\circ \beta_{n+1} ).
\label{def_eta_n_plus_1}
\end{gather}
Given $R_s$, if $T$ is small, we get $\p \eta_{n+1} - \id \p_s \leq R_s$.
$\eta_{n+1}$ will be in $\Emb^s(\Om)$ if it is sufficiently close to the identity,
and we choose $R_s$ accordingly.

Differentiating $\eta_{n+1}$ we get
\begin{align}
\begin{split}
\dot{\eta}_{n+1} & =  z_{n+1} + \nabla h_{n+1} \circ (\id + \nabla f_{n+1} ) 
\circ \beta_{n+1} \\
& = ( w_{n+1} + \nabla h_{n+1} ) \circ (\id + \nabla f_{n+1} ) 
\circ \beta_{n+1} ,
\end{split}
\nonumber
\end{align}
so we let 
\begin{gather}
\widehat{u}_{n+1} = 
w_{n+1} + \nabla h_{n+1}.
\nonumber
\end{gather}
Then $\widehat{u}_{n+1}$ is divergence-free 
and has the same regularity as $\widehat{u}_{n}$. From the previous bounds, we obtain
the desired estimates of 
$\widehat{u}_{n+1}$ and $\dot{\widehat{u}}_n$ in terms of $\Rin$, possibly 
after increasing $\Rin$. From the initial condition for the $f$-equation in step \emph{(n3)},
and (\ref{v_n_plus_1_t_zero}), it follows that $\dot{\eta}_{n+1}(0) = u_0$.

Differentiating $\eta_{n+1}$ in time twice and evaluating at $t=0$, we
can control $\ddot{\eta}_{n+1}(0)$ in terms of the other  quantities of the $n+1$st iteration
at time zero, which are inductively bounded by a function of $R_s$, $u_0$, $T$ and $\ddot{R}_s^0$
in view of (\ref{induction_bound_4}). Relabeling the constants and choosing 
$\ddot{R}_s^0$ and $\Rin$ appropriately, we conclude that the desired inductive
bounds hold for the  quantities of the $n+1$st iteration for the desired time interval.

Finally, a careful analysis of the above steps reveals that, after 
$T$ and the constants $R_s$, $\ddot{R}_s^0$ and $\Rin$ are suitably chosen in order to assure
that the  quantities of the $n+1$st iteration satisfy the inductive assumptions, these constants can 
be chosen uniformly, i.e., independent of $n$. In particular there exists
a $T>0$ that works for all $n$,  
and we get a well-defined
sequence $\{ \eta_n \}$ of embeddings, provided that
the procedure holds at $n=0$, i.e., to start the above iteration,  $\eta_0$ is needed.
Let $\zeta$ be a solution
to the Euler equations in the fixed domain $\Om$ with initial conditions
$\zeta(0) = \id$, $\dot{\zeta}(0) = Pu_0$, where $u_0$ is the given initial
condition for the free-boundary Euler equations (\ref{free_boundary_full}). 
For any $R_s$ that we choose, we can pick $T$ sufficiently small such that
$\p \zeta - \id \p_s \leq R_s$. Letting $\vartheta$
be given by $\dot{\zeta} = \vartheta \circ \zeta$, standard energy
estimates for the Euler equations produce bounds for $\p \vartheta \p_s$ and
$\p \dot{\vartheta} \p_{s-1}$ in terms of a constant that depends on $T$ 
and $u_0$. Hence, we can find a constant $C_0(T,u_0,R_s)$ 
depending on $T$, $u_0$, 
and $R_s$ such that
\begin{align}
\begin{split}
&\p \zeta - \id  \p_s \leq R_s, \\
&\p \vartheta \p_s \leq C(T,u_0,R_s), \, \p \dot{\zeta} \p_s \leq C(T,u_0,R_s), \\
&\p \dot{\vartheta} \p_{s-1} \leq C(T,u_0,R_s), \, 
\p \ddot{\zeta} \p_{s-1} \leq C(T,u_0,R_s).
\end{split}
\label{step_zero}
\end{align}
Set $\eta_0 = \beta_0 = \zeta$, $\widehat{u}_0 = \vartheta$,
$\nabla f_0 = z_0 = w_0 = \nabla h_0 = 0$.
Minor adjustments have to be made at the first step, since $f_0$ here 
is not obtained from theorem \ref{theo_f_eq}, and $\dot{\eta}_0(0) = P u_0$ rather
than $u_0$, but it is clear that this does not hinder the construction of the sequence
$\{ \eta_n \}$.

\subsection{Convergence of the approximating sequences}
Denote by $W^{k,\infty}([0,T], H^s(\Om))$ the usual Sobolev space
of $H^s(\Om)$-valued functions on $[0,T]$ whose derivatives up to order $k$
are essentially bounded with respect to the $H^s(\Om)$ topology.
$W^{k,2}([0,T], H^s(\Om))$ is similarly defined using the $L^2$ inner 
product respect to $t \in [0,T]$.

We start by establishing some further bounds. In  the arguments below,
the particular form of some of the expressions involved will be omitted for the
sake of simplicity, since such expressions are cumbersome. The
relevant information will be the derivative counting.

Differentiating (\ref{eq_f_bry}) in time, invoking (\ref{bilinear}), recalling that $s > \frac{3}{2} + 2$,
and using our bounds on $f$, $v$, and $\widetilde{q}_0$, we find
that $\{  \nabla \dddot{f}_n \} $ is bounded in $H^{s-3}(\Om)$ (with a bound depending on $\kappa$).

From step $(n4)$ of the inductive construction, the function $h_n(t)$ satisfies
\begin{gather}
\begin{cases}
\Delta h_{n+1} = 0, & \text{ in } \underset{0 \leq t \leq T}{\bigcup} \{t\} \times  (\id + \nabla f_{n+1})(\Om), \\
\frac{\partial h_{n+1}}{\partial \widetilde{N}_{n+1}} =
\langle (\nabla \dot{f}_{n+1} + D_{v_{n+1}} \nabla f_{n+1} + v_{n+1} ) \circ 
(\id + \nabla f_{n+1})^{-1},
\widetilde{N}_{n+1} \rangle & \text{ on } 
\underset{0 \leq t \leq T}{\bigcup} \{t\} \times \partial(\id + \nabla f_{n+1}) (\Om),
\end{cases}
\nonumber
\end{gather}
Differentiating twice, we see that we can bound $\p \nabla \ddot{h}_n \p_{s-3}$
in terms of: $\p \nabla \dddot{f} \p_{s-3},$ (which was just estimated) together with
a constant depending on the domain $(\id + \nabla f_{n+1}) (\Om)$, which was handled as in section \ref{section_suc_app};
and other quantities that have already been bounded. We obtain therefore 
an $H^{s-3}$ bound for the sequence 
$\{  \nabla \ddot{h}_n \} $. 

A similar argument using the equation for $z_n$ in step $(n5)$ of the inductive
construction shows that we can bound $\{ \ddot{z}_n \}$ in $H^{s-3}$.

We now establish the convergence. It will be implicit in the arguments that
we will be seeking convergence of some sub-sequence. \\

\noindent \emph{Convergence of $\{ \eta_n \}$:}
From (\ref{induction_bound_1}), (\ref{induction_bound_3}) and (\ref{induction_bound_5}), we find that
the sequence $\{\eta_n\}$ is bounded in\linebreak $W^{2,2}([0,T], H^{s-\frac{3}{2}}(\Om))$, and 
thus it has a subsequence, still denoted $\{\eta_n\}$, converging weakly to a limit
$\eta$. Also, $\eta_n$ and $\dot{\eta}_n$ are bounded in $L^\infty([0,T],H^s(\Om))$,
thus they have a weakly convergence subsequence. 
We conclude that $\eta \in W^{1,\infty}([0,T], H^s(\Om))$. In particular, 
$\eta \in C^0([0,T], H^s(\Om))$. A similar argument yields $\ddot{\eta} \in 
L^\infty([0,T],H^{s-\frac{3}{2}}(\Om))$, from which we find that
$\eta \in  W^{1,\infty}([0,T], H^s(\Om)) \cap W^{2,\infty}([0,T],H^{s-\frac{3}{2}}(\Om))$.

Furthermore, $\p \eta_n(t) - \eta_n(t^\prime) \p_s \leq \Rin |t - t^\prime|$ in view
of (\ref{induction_bound_3}), and $\{ \eta_n \}$ has compact closure in 
$H^{s-1}(\Om)$ because of the the compactness of the embedding 
$H^s \subset H^{s-1}$. Hence, by the Arzel\`a-Ascoli theorem, the convergence $\eta_n \rar \eta$
occurs in $C^0([0,T],H^{s-1}(\Om))$. Now, since $\{ \eta_n \}$ is bounded in $H^s$,
interpolating between $H^s$ and $H^{s-1}$ shows that 
 $\eta_n \rar \eta$
in $C^0([0,T],H^{s-\de}(\Om))$, where $\de>0$ is some fixed small number.
A similar argument using (\ref{induction_bound_5})
gives that $\dot{\eta}_n \rar \dot{\eta}$
in $C^0([0,T],H^{s-2}(\Om))$. After interpolation, we have in fact
$\dot{\eta}_n \rar \dot{\eta}$
in $C^0([0,T],H^{s-\de}(\Om))$. Therefore, $\eta_n \rar \eta$ in 
$C^1([0,T],H^{s-\de}(\Om)))$.

Next, from the definition of $\eta_n$, the inductive bounds and the bounds  established above
on $\ddot{z}_n$, $\nabla \ddot{h}_n$, and $\nabla \ddot{f}$, we find that 
$\{ \dddot{\eta}_n \}$ is bounded in $H^{s-3}$. Thus, invoking once more 
the Arzel\`a-Ascoli theorem, we find that $\ddot{\eta}_n$ converges 
in $C^0([0,T], H^{s-3-\de}(\Om))$, where  $\de>0$ is a fixed small number.
By interpolation and (\ref{induction_bound_5}), $\ddot{\eta}_n$ converges also in 
in $C^0([0,T], H^{s-\frac{3}{2}-\de}(\Om))$.

Summarizing:

\noindent $\bullet$ $\eta_n \rar \eta$ in 
$C^1([0,T],H^{s-\de}(\Om)) \cap C^2([0,T], H^{s-\frac{3}{2}-\de}(\Om))$, and
\begin{gather}
\eta \in  W^{1,\infty}([0,T], H^s(\Om)) \cap W^{2,\infty}([0,T],H^{s-\frac{3}{2}}(\Om)).
\nonumber
\end{gather}

\noindent \emph{Convergence of $\{ \beta_n \}$:}
We can repeat the same argument with the sequence $\{\beta_n \}$ and conclude that it has a limit
in $ W^{1,\infty}([0,T], H^s(\Om)) \cap W^{2,\infty}([0,T],H^{s-\frac{3}{2}}(\Om))$.
Because $\cD_\mu^s(\Om)$ is a closed submanifold of $H^s(\Om)$, we have in fact
$\beta \in   W^{1,\infty}([0,T], \cD_\mu^s(\Om)) \cap W^{2,\infty}([0,T],H^{s-\frac{3}{2}}(\Om)).
$
Because of (\ref{ind_bounds_1}) and (\ref{ind_bounds_2}), we
can as before use the Arzel\`a-Ascoli theorem and interpolation inequalities
to conclude that 

\noindent $\bullet$ $\beta_n \rar \beta$ in
$C^1([0,T],H^{s-\de}(\Om))$ and
\begin{gather}
\beta \in   W^{1,\infty}([0,T], \cD_\mu^s(\Om)) \cap W^{2,\infty}([0,T],H^{s-\frac{3}{2}}(\Om)).
\nonumber
\end{gather}

\noindent \emph{Convergence of $\{ \nabla f_n \}$:}
Recalling that bounds on $\p f_n \p_{s,\partial}$ translate into
bounds on $\p f_n \p_{s+\frac{1}{2}}$ (see estimate (\ref{elliptic_estimate_f_bry})),
we apply  an analogous argument to the sequence $\{ \nabla f_n \}.$ 
Using the bounds and regularity given by theorem \ref{theo_f_eq},
the sequence is bounded in $W^{2,2}([0,T], H^{s-\frac{3}{2}}(\Om)).$
Also $\{ \nabla f_n \}$, $\{ \nabla \dot{f}_n \}$, and
$\{ \nabla \ddot{f}_n \}$ are bounded in $L^\infty([0,T], H^{s+\frac{3}{2}}(\Om))$,
$L^\infty([0,T], H^{s}(\Om))$, and $L^\infty([0,T], H^{s-\frac{3}{2}}(\Om))$, 
respectively. Therefore, $\nabla f_n$ converges weakly in 
$W^{2,2}([0,T], H^{s-\frac{3}{2}}(\Om))$ to a limit $\nabla f$ which is
 in
$L^{\infty}([0,T], H^{s+\frac{3}{2}}(\Om)) \cap  W^{1,\infty}([0,T], H^s(\Om)) \cap W^{2,\infty}([0,T],H^{s-\frac{3}{2}}(\Om)).
$
Furthermore,  in light of  
the previously obtained bound on $\nabla \dddot{f}_n$, we have $\p \nabla \ddot{f}(t) - \nabla \ddot{f}(t^\prime) \p_{s-3} \leq C|t-t^\prime|$. Hence, as before, combining 
the  Arzel\`a-Ascoli theorem with interpolation inequalities gives \\

\noindent $\bullet$ $\nabla f_n \rar \nabla f$ in 
$C^0([0,T],H^{s+\frac{3}{2} - \de} (\Om)) \cap C^1([0,T], H^{s-\de}(\Om)) \cap
C^2([0,T], H^{s-\frac{3}{2} - \de}(\Om))$ and 
\begin{gather}
\nabla f \in L^{\infty}([0,T], H^{s+\frac{3}{2}}(\Om)) \cap  W^{1,\infty}([0,T], H^s(\Om)) \cap W^{2,\infty}([0,T],H^{s-\frac{3}{2}}(\Om)).
\nonumber
\end{gather}

The other quantities appearing in (\ref{def_eta_n_plus_1}) are handled in a similar fashion.
We obtain: \\

\noindent $\bullet$ $\widehat{u}_n \circ (\id +\nabla f_n )\circ \beta_n \rar 
\widehat{u} \circ (\id + \nabla f) \circ \beta$ in $C^0([0,T], H^{s-\de}(\Om))$ and
\begin{gather}
\widehat{u} \circ (\id + \nabla f) \circ \beta \in L^\infty([0,T],H^s(\Om)) \cap
W^{1,\infty}([0,T],H^{s-\frac{3}{2}}(\Om)).
\nonumber
\end{gather}
\noindent $\bullet$ $\widetilde{q}_{0,n} \rar \widetilde{q}_0$ in $C^0([0,T], H^{s+1-\de}(\Om))$
and
\begin{gather}
\widetilde{q}_0 \in L^\infty([0,T],H^{s+1}(\Om)) \cap
W^{1,\infty}([0,T],H^{s-\frac{1}{2}}(\Om)).
\nonumber
\end{gather}
\noindent $\bullet$ $\nabla h_n \circ (\id +\nabla f_n )\circ \beta_n \rar 
\nabla h \circ (\id + \nabla f) \circ \beta$ in $C^0([0,T], H^{s-\de}(\Om))$ and
\begin{gather}
\nabla h \circ (\id + \nabla f) \circ \beta \in L^\infty([0,T],H^s(\Om)) \cap
W^{1,\infty}([0,T],H^{s-\frac{3}{2}}(\Om)).
\nonumber
\end{gather}
\noindent $\bullet$ $w_n \circ (\id +\nabla f_n )\circ \beta_n \rar 
\nabla w \circ (\id + \nabla f) \circ \beta$ in $C^0([0,T], H^{s-\de}(\Om))$ and
\begin{gather}
w \circ (\id + \nabla f) \circ \beta \in L^\infty([0,T],H^s(\Om)) \cap
W^{1,\infty}([0,T],H^{s-1}(\Om)).
\nonumber
\end{gather}
Notice that all the limit quantities satisfy the same bounds as the corresponding
sequences. In particular, $\nabla f$ satisfies the bounds given by theorem \ref{theo_f_eq}.

\subsection{Solution\label{solution_large_kappa}}
With the above information, we can pass to the limit in (\ref{def_eta_n_plus_1}) obtaining
\begin{gather}
\eta = \id + \int_0^t( w + \nabla h) \circ (\id + \nabla f)\circ  \beta.
\nonumber
\end{gather}
$\eta$ is volume-preserving and its velocity $u$ given by $\dot{\eta} = u \circ \eta$
agrees with $\widehat{u}$. Also, 
$\eta$ necessarily has the form $\eta = (\id + \nabla f) \circ \beta$
and $f$ and $\beta$ have the above regularity properties. 
In particular $\eta \in \ccE^s_\mu(\Om)$. 
Moreover, in light of the 
way $w$ and $\nabla h$ were constructed (see section \ref{overview}),
and the previously established convergences, $u$ satisfies
\begin{gather}
P(\frac{\partial u}{\partial t} + \nabla_u u ) = 0.
\label{P_Euler}
\end{gather}
We also know that $\nabla f$ satisfies (\ref{system_f_v_f_dot_dot}), with
$p$ satisfying  
$p \circ (\id +\nabla f)  = p_0 \circ (\id +\nabla f)  + \cA_H \circ (\id +\nabla f)$,
where $p_0 \circ (\id +\nabla f)  = \widetilde{q}_0$ 
(compare with (\ref{split_pressure}) and (\ref{free_boundary})), so in particular 
$p = \cA$ on $\partial \eta(\Om)$. From the above convergence, we know that 
$p_0$ is in $H^{s+1}(\Om(t))$ and $\cA$, being third order in $\left. f \right|_{\partial \Om}$,
is in $H^{s-1}(\partial \Om(t))$, so that $\cA_H \in H^{s-\frac{1}{2}}( \Om(t))$,
and hence $p \in H^{s-\frac{1}{2}}( \Om(t))$.

It remains to show that 
(\ref{free_boundary_full}) is satisfied, which is not immediately obvious since
we did not solve (\ref{u_Eulerian}) in the iteration, but rather
$P$ of that equation, i.e., (\ref{P_Euler}). We will now show that
$Q$ of (\ref{u_Eulerian}) follows from the equation for $f$ that we solved.

Equation (\ref{P_Euler}) gives
$\frac{\partial u}{\partial t} + \nabla_u u = Q( \frac{\partial u}{\partial t} + \nabla_u u )$, so there exists a function $\chi$ such that 
\begin{gather}
\frac{\partial u}{\partial t} + \nabla_u u = \nabla \chi.
\nonumber
\end{gather}
We need to show that $\chi = -p$, where $p$ is as in (\ref{free_boundary}).
As $\ddot{\eta} \circ \eta^{-1} = \frac{\partial u}{\partial t} + \nabla_u u$,
we have 
\begin{gather}
\ddot{\eta} = \nabla \chi \circ \eta.
\label{eta_dot_dot_chi}
\end{gather}
On the other hand (\ref{eta_decom_exp}) also holds, and because it is a fixed
point of the above iteration, $f$ in this case does satisfy 
(\ref{system_f_v_f_dot_dot}). Therefore, differentiating (\ref{eta_decom_exp})
in time twice, decomposing according to (\ref{decomp}) and  using 
(\ref{system_f_v_f_dot_dot}) and (\ref{eta_dot_dot_chi}) gives
(compare with (\ref{equation_ddot_nabla_f})):
\begin{gather}
Q(\id - L L_1^{-1} P )(\nabla \chi \circ \widetilde{\eta} )
= 
Q(\id - L L_1^{-1} P )(- \nabla p \circ \widetilde{\eta} ),
\nonumber
\end{gather}
or
\begin{gather}
Q(\id - D^2f L_1^{-1} P )(\nabla \chi \circ \widetilde{\eta} )
= 
Q(\id - D^2 f L_1^{-1} P )(- \nabla p \circ \widetilde{\eta} ),
\nonumber
\end{gather}
since $QP=0$. We can write this as
\begin{gather}
\left( Q_{\widetilde{\eta}^{-1}} - (D^2 f L_1^{-1} P)_{\widetilde{\eta}^{-1}} \right)
(\nabla \chi ) 
=
\left( Q_{\widetilde{\eta}^{-1}} - (D^2f L_1^{-1} P)_{\widetilde{\eta}^{-1}} \right)
(-\nabla p ). 
\nonumber
\end{gather}
Since $Q$ is the identity on its image, for $f$ small
the operator
\begin{gather}
Q_{\widetilde{\eta}^{-1}} - (D^2f L_1^{-1} P)_{\widetilde{\eta}^{-1}}
\nonumber
\end{gather}
is invertible on $Q( H^s(\widetilde{\eta} (\Om) ))$, and therefore
$\nabla \chi = -\nabla p$, as desired.

From the regularity of $f$ and we have
$\cA_H \circ (\id+\nabla f) \circ \beta \in L^\infty( [0,T], H^{s-\frac{1}{2}}(\Om))$,
and hence $p \in L^\infty( [0,T], H^{s-\frac{1}{2}} (\Om(t) ) )$. As 
$\left. p \right|_{\partial \Om(t)} = \cA$, 
we conclude that there exists 
\begin{gather}
\eta \in  W^{1,\infty}([0,T], H^s(\Om)) \cap W^{2,\infty}([0,T],H^{s-\frac{3}{2}}(\Om))
\nonumber
\end{gather}
satisfying (\ref{free_boundary_full}), as desired. Uniqueness of $\eta$ now follows from the uniqueness
of the decomposition $\eta = (\id + \nabla f)\circ \beta$ given by the exponential 
map near the identity. This implies uniqueness of $p$ in view of
(\ref{equation_p_0}), (\ref{bry_p_0}), (\ref{harmonic_ext_def}), and
(\ref{equation_harm_ext}).

To finish the existence part of  theorem \ref{main_theorem}, we point out that
 $\eta \in  W^{1,\infty}([0,T], H^s(\Om))$ implies 
$\eta \in C^0([0,T], H^s(\Om))$, and thus $\eta$ has the stated regularity.

\subsection{Proof of theorem \ref{main_theorem}: existence\label{existence_proof}}
Define a new time variable by $t = a \tau$, where $a>0$ is a constant that will be chosen. 
Define $\eta_a$ by
$\eta_a(\tau) = \eta(t)$, i.e., $\eta_a(\tau) = \eta(a\tau)$. Then $\ddot{\eta}_a(\tau) = a^2 \ddot{\eta}(t) \equiv a^2 \ddot{\eta}(a\tau)$.
Using the equation for $\eta$, i.e., (\ref{basic_fluid_motion_full}), we have 
$\ddot{\eta}_a(\tau) = -a^2 \nabla p(t) \circ \eta(t) \equiv -a^2 \nabla p(a \tau) \circ \eta(a\tau)$. So if we define $p_a(\tau) = p(t)$, 
i.e., $p_a(\tau) = p(a\tau)$, we obtain $\ddot{\eta}_a(\tau) = -a^2 \nabla p_a(\tau) \circ \eta_a(\tau)$.
Then letting $\pi_a(\tau) = a^2 p_a(\tau)$, we finally obtain
\begin{gather}
\ddot{\eta}_a(\tau) = - \nabla \pi_a(\tau) \circ \eta_a(\tau).
\label{eq_eta_new}
\end{gather}
Multiplying  (\ref{bry_p_full}) by $a^2$ gives $a^2 p(t) \equiv a^2 p(a\tau) \equiv a^2 p_a(\tau) \equiv \pi_a(\tau)
= a^2 \kappa \cA(t) \equiv a^2 \kappa \cA(a\tau)$. Thus, if we define $\cA_a(\tau) = \cA(t) \equiv \cA(a\tau)$ 
and $\kappa_a = a^2 \kappa$, we have
\begin{gather}
\pi_a(\tau) = \kappa_a \cA_a(\tau) \, \text{ on } \, \eta_a(\tau)(\partial \Om).
\label{eq_p_new}
\end{gather}
Equations (\ref{eq_eta_new}) and (\ref{eq_p_new}) are of the form (\ref{free_boundary_full}) (with $u_a(\tau)$ defined
accordingly) with a coefficient of surface tension given by $\kappa_a$. If $\kappa > 0$ is fixed, not necessarily large,
we can then choose $a^2$ large enough so that $\kappa_a$ is sufficiently large as to apply 
the result of section \ref{solution_large_kappa}, provided the other assumptions
can also be accommodated. This is discussed below.
We therefore obtain a solution $(\eta_a,\pi_a)$. Reverting back to the original variables, this yields a solution to the original 
problem for $\eta$ and $p$ with a given $\kappa >0$.

The result in section \ref{solution_large_kappa}
assumes that $\partial \Om$ has constant mean curvature. We shall 
show that if we are 
interested only in part (1) of theorem \ref{main_theorem},
 this assumption can be removed as well.

First, without such an assumption, we do not necessarily have  
$\cA_{\partial \Om} > 0$ so the proof of lemma \ref{lemma_elliptic_pseudo_2}
has to be altererd. Before we used $\cA_{\partial \Om} > 0$
to show the positivity and invertibility of  
$-\overline{\Delta} \partial_\nu - \frac{1}{2} \cA_{\partial \Om} \overline{\Delta}$.
We then used this result in proposition \ref{prop_first_order} to construct an evolution
operator associated with $A_\kappa(t)$ (see equation (\ref{first_order_simple})).
In the present case, we consider the operator  $-\overline{\Delta} \partial_\nu$
instead of $-\overline{\Delta} \partial_\nu - \frac{1}{2} \cA_{\partial \Om} \overline{\Delta}$.
As discussed in lemma lemma \ref{lemma_elliptic_pseudo_2},  $-\overline{\Delta} \partial_\nu$
is positive and invertible, so we still obtain the operator $\cS$ (see
(\ref{cS_defi})). This of course gives and extra term 
in the operator $A_\kappa(t)$, namely,  
$\frac{\sqrt{\kappa}}{2} \cA_{\partial \Om} \overline{\Delta} \cS^{-1}$. But this will be a 
bounded operator and therefore we still obtain an evolution operator 
from theorem \ref{theorem_evolution_operator} (see the last statement of theorem
\ref{theorem_generator_semi_group}). 
 
Second, when the mean curvature of $\Om$,
$\cA_{\partial \Om}$, is not constant, then  the equation for $f$ will contain the additional term
$\kappa \cA_{\partial \Om}$ (see equations (\ref{eq_f_bry}), (\ref{F_tilde_operator}) 
and remark \ref{remark_constant_mean_curvature}). This extra term
is simply an extra inhomogeneous term that can be absorbed into $\cG.$ 
 (see equation (\ref{first_order_simple})).

The result in section \ref{solution_large_kappa} also 
assumes $\nabla \dot{f}(0)$ to be small, i.e., the gradient part of the initial 
velocity, $Q u_0$, has to be small (we do not have to worry about $\nabla f(0)$ being small since $\nabla f(0) = 
0$). We now show how this assumption can be removed.

The assumption that $\nabla \dot{f}(0)$ is small is used in step $(n3)$ 
of section \ref{section_suc_app}
to guarantee that $\dot{f}(0) = \left. \Delta_\nu^{-1} \dive u_0 \right|_{\partial \Om}$ is small.
That $\dot{f}(0)$ is small is used in theorem \ref{theo_f_eq} (see
(\ref{f_1_hypothesis})) in order to obtain estimate (\ref{estimate_using_dot_f_small}).
However (\ref{estimate_using_dot_f_small}) still holds if $\dot{f}(0)$
is not small (we explain below why the ensuing argument was not used in the proof of
theorem \ref{theo_f_eq}). I.e., in theorem \ref{theo_f_eq}, assume that instead 
of (\ref{f_1_hypothesis}) we have
\begin{gather}
\p f_1 \p_{s+\frac{1}{2},\partial} \leq K_3.
\label{new_f_1_hypothesis}
\end{gather}
In what follows, we continue to assume that $\kappa$ is large since,
as showed above, the problem for arbitrary $\kappa > 0$ can be reduced to that of large 
$\kappa$ via a rescaling.

As in the proof of theorem \ref{theo_f_eq}, we invoke (\ref{z_U_cG}).
The estimate of the term $\int_0^t \cU(t,\tau) \cG(\tau)$ does not rely on 
(\ref{f_1_hypothesis}), so this term yields
$\frac{C(K_0) T}{\sqrt{\kappa}} \sup_{0\leq \tau \leq T} \p \cG( \tau ) \p_{s+\frac{1}{2},\partial}$
as before. 

For the term $\cU(t,0) z(0)$, first notice that (\ref{estimate_using_dot_f_small})
corresponds to the first component of $z$, i.e., $z_1$. Write
\begin{gather}
\cU(t,0) z(0) = 
\left(
\begin{matrix}
\cU_{11}(t,0) & \cU_{12}(t,0) \\
\cU_{21}(t,0) & \cU_{22}(t,0)
\end{matrix}
\right)
\,
\left(
\begin{matrix}
0 \\
f_1
\end{matrix}
\right),
\nonumber
\end{gather}
where $z(0) = (0,f_1)$. The first component of the above is 
$\cU_{12}(t,0) f_1$. Recall that $\cU(0,0) = I$, so $\cU_{12}(0,0)=0$, and also $\cU(t,\tau)$ is strongly continuous
into $Y = H^{s+\frac{1}{2}}_0(\partial \Om)$ (see theorem \ref{theorem_evolution_operator}
and section \ref{section_f_boundary}). Thus, with $f_1$ and $\kappa$ given, we can choose
$T$ (and hence $t$) so small that
\begin{gather}
\p \cU_{12}(t,0) f_1 \p_{s+\frac{1}{2},\partial } \leq  \frac{C(K_0)  K_3}{\kappa}.
\nonumber
\end{gather}
Therefore, estimate (\ref{estimate_using_dot_f_small}) still holds without the assumption
that $\nabla \dot{f}(0)$ is small.

The above argument was not used in the proof of theorem 
\ref{theo_f_eq} because it produces a time interval
$[0,T_\kappa)$ that shrinks to zero as
$\kappa \rar \infty$, so the corresponding existence result and estimates
would not apply to the limit
$\kappa \rar\infty.$

The other part in the proof of theorem \ref{theo_f_eq} where (\ref{f_1_hypothesis})
has been employed was (\ref{estimate_h_dot_cM}). It is clear, however, that the argument
following (\ref{estimate_h_dot_cM}) still holds if (\ref{new_f_1_hypothesis})
replaces (\ref{f_1_hypothesis}). Indeed, under (\ref{new_f_1_hypothesis}),
(\ref{estimate_h_dot_cM}) becomes
\begin{align}
\begin{split}
\p \dot{h} \p_{s+\frac{1}{2},\partial} & \leq C(K_0) 
 \p f_1 \p_{s+\frac{1}{2},\partial}
+ C(K_0) T \sup_{0\leq \tau \leq T} \p \cG( \tau ) \p_{s+\frac{1}{2},\partial} \\
& \leq C(K_0) K_3
+C(K_0) T \sup_{0\leq \tau \leq T} \p \cG( \tau ) \p_{s+\frac{1}{2},\partial},
\end{split}
\label{new_estimate_h_dot_cM}
\end{align}
and we can still choose $\ell$ large enough so that the right hand side of
(\ref{new_estimate_h_dot_cM}) is less than $\ell$.

Finally, (\ref{estimate_h_dot_cM}) was also invoked when we derived estimate
(\ref{estimate_difference_As_3}). But again, 
the $\frac{1}{\kappa}$ factor is not needed here since we only need the 
right hand side of (\ref{estimate_h_dot_cM}) to be bounded in order to obtain
(\ref{estimate_difference_As_3}), which is the case in light of 
(\ref{new_estimate_h_dot_cM}) (see the paragraph after (\ref{estimate_difference_As_3})).

Inspection in the proof leading to the existence part
in section \ref{solution_large_kappa}
shows that the remaining
arguments are the same without the assumption that $Q u_0$ is small. 
This establishes the proof of theorem \ref{main_theorem}.

\begin{remark}
Notice that these arguments are consistent, in the following sense. We obtain a solution that exists for 
$0 \leq \tau \leq \cT$, or, in the $t$ variable, $0 \leq \frac{1}{a} t \leq \cT$, i.e, $0 \leq  t \leq a \cT$.  So, if we take the limit
$a \rar 0$, so that $\kappa_a \rar 0$, the interval of existence shrinks to zero, as it should since the problem is not well
posed when $\kappa_a = 0$ and the Taylor sign condition, which we do not assume, does not hold
 (it turns out that $\cT$ also depends on $a$, so this idea of consistency with $a \rar 0$
is more complicated than just stated, but on a heuristic level we see that we obtain what is expected).
\end{remark}

\section{Proof of theorem \ref{main_theorem}: convergence \label{section_convergence}}

Here we establish the convergence part of theorem \ref{main_theorem}, and thus
we assume the corresponding hypotheses and notations throughout.
Some of  arguments below resemble those of 
theorem 5.1 in \cite{ED}
and theorem 5.5 in \cite{E2}.
From now on  it is convenient to re-instate the subscript $\kappa$.

Let $[0,T_\kappa)$ be the maximal interval of existence for the solution
$(\eta_\kappa, p_\kappa)$ found above. Let $\cT_\kappa = \min\{ T, T_\kappa \}$,
where we recall that $[0,T]$ is an interval on which the solution to (\ref{Euler})
is defined. We henceforth consider the quantities $\eta_\kappa$ and $\zeta$
on $[0,\cT_\kappa)$. Assume also that $T$ is chosen such that
(\ref{step_zero}) holds.

As in the calculations leading to (\ref{equation_ddot_nabla_f}), we differentiate
$\eta_\kappa$ twice in time to obtain
\begin{gather}
\ddot{\eta}_\kappa - \ddot{\be}_\kappa
= D^2 f_\kappa \circ \beta_\kappa \ddot{\be}_\kappa
+ (\nabla \ddot{f}_\kappa + 2 D_{v_\kappa} \nabla \dot{f}_\kappa
+ D^2_{v_\kappa v_\kappa} \nabla f_\kappa )\circ \be_\kappa,
\label{eta_minus_beta_dot_dot_1}
\end{gather}
where we used that $\dot{\be}_\kappa = v_\kappa \circ \beta_\kappa$, so 
that
\begin{gather}
\ddot{\be}_\kappa = ( \frac{\partial v_\kappa}{\partial t} 
+ \nabla_{v_\kappa} v_{\kappa} )\circ \be_\kappa.
\label{beta_dot_dot_Euler_type}
\end{gather}
Integrating (\ref{eta_minus_beta_dot_dot_1}):
\begin{gather}
\dot{\eta}_\kappa - \dot{\be}_\kappa
=u_{0\kappa} - Pu_{0\kappa} 
+ \int_0^t (\nabla \ddot{f}_\kappa + 2 D_{v_\kappa} \nabla \dot{f}_\kappa
+ D^2_{v_\kappa v_\kappa} \nabla f_\kappa )\circ \be_\kappa
 +\int_0^t D^2 f_\kappa\circ \beta_\kappa   \ddot{\be}_\kappa,
\label{eta_minus_beta_dot_1}
\end{gather}
where we used that $\dot{\eta}_\kappa(0) = u_0$ and 
$\dot{\beta}_\kappa(0) = P u_0$. Write (\ref{eta_minus_beta_dot_1})
as 
\begin{gather}
\dot{\eta}_\kappa - \dot{\be}_\kappa
= Q u_{0\kappa} + R_\kappa,
\label{eta_minus_beta_dot_2}
\end{gather}
where
\begin{gather}
R_\kappa = \int_0^t r_\kappa,
\label{def_R_kappa}
\end{gather}
with
\begin{gather}
r_\kappa = 
 (\nabla \ddot{f}_\kappa + 2 D_{v_\kappa} \nabla \dot{f}_\kappa
+ D^2_{v_\kappa v_\kappa} \nabla f_\kappa )\circ \be_\kappa
 + D^2 f_\kappa \circ \beta_\kappa  \ddot{\be}_\kappa.
\label{def_r_kappa}
\end{gather}
On the other hand, (\ref{dot_eta_f_beta}) gives
\begin{gather}
\dot{\eta}_\kappa - \dot{\beta}_\kappa = (\nabla \dot{f}_\kappa
+ D_{v_\kappa} \nabla f_\kappa ) \circ \beta_{\kappa},
\nonumber
\end{gather}
so that the estimates of section \ref{section_suc_app} give
\begin{align}
\begin{split}
\p \dot{\eta}_\kappa - \dot{\beta}_\kappa \p_s & \leq
\p \nabla \dot{f}_\kappa \circ \beta_{\kappa} \p_s 
+ \p ( D_{v_\kappa} \nabla f_\kappa ) \circ \beta_{\kappa} \p_s
\\
&
\leq C \p \nabla \dot{f}_\kappa \p_s (1+ \p \beta_{\kappa} \p_s^s )
+
C \p v_\kappa \p_s \p \nabla f_\kappa \p_{s+1} 
 (1+ \p \beta_{\kappa} \p_s^s  )
 \\
 &
\leq \frac{C(\Rin)}{\sqrt{\kappa}}  + \frac{C(\Rin)}{\kappa} \leq 
\frac{C(\Rin)}{\sqrt{\kappa}},
\end{split}
\label{estimate_eta_minus_beta_dot}
\end{align}
where $C(\Rin)$ is a constant depending on $\Rin$,  and $\Rin$ is
as in section \ref{section_suc_app}.
Combining (\ref{estimate_eta_minus_beta_dot}) with 
(\ref{eta_minus_beta_dot_2}) and our assumptions on $Qu_{0\kappa}$
leads to
\begin{gather}
\p R_\kappa \p_s \leq \frac{C(\Rin)}{\sqrt{\kappa}}.
\label{estimate_R_kappa}
\end{gather}
\begin{remark}
It is important to notice that (\ref{estimate_R_kappa})  follows from
the specific relation between $R_\kappa$ and $\dot{\eta}_\kappa - \dot{\be}_\kappa$, i.e., equation (\ref{eta_minus_beta_dot_2}).
In fact,  we could not try to estimate $R_\kappa$ term by term from
(\ref{def_R_kappa}) and (\ref{def_r_kappa}) since some of such terms, 
e.g. $\nabla \ddot{f}_\kappa$, may 
not even be in $H^s$.
\end{remark}
Equations (\ref{eta_minus_beta_dot_2}), (\ref{def_R_kappa}) and (\ref{def_r_kappa})
also give
\begin{gather}
\ddot{\eta}_\kappa - \ddot{\be}_\kappa = r_\kappa.
\label{eta_minus_beta_dot_dot_2}
\end{gather}
Recall that we denote by $\zeta$ the solution
to  (\ref{Euler}). From (\ref{eta_minus_beta_dot_2}) we have
\begin{gather}
\ddot{\zeta} - \ddot{\eta}_\kappa =
 \ddot{\zeta} - \ddot{\be}_\kappa - r_\kappa,
\nonumber
\end{gather}
so that
\begin{gather}
\dot{\zeta} - \dot{\eta}_\kappa =
\vartheta_0 -  u_{0\kappa}
+ 
\int_0^t( \ddot{\zeta} - \ddot{\be}_\kappa ) - R_\kappa,
\label{zeta_minus_eta_dot_with_beta_dot_dot}
\end{gather}
after recalling (\ref{def_R_kappa}).

It is well-known (see, e.g., \cite{EM}) that (\ref{Euler}) can be written
\begin{gather}
\ddot{\zeta} = Q( \nabla_\vartheta \vartheta ) \circ \zeta,
\nonumber
\end{gather}
and, since $\vartheta = \dot{\zeta} \circ \zeta^{-1}$, 
it follows that \begin{gather}
\ddot{\zeta} = (Q ( \nabla_{\dot{\zeta} \circ \zeta^{-1}} 
 \dot{\zeta} \circ \zeta^{-1} ) )  \circ \zeta.
\label{Euler_Lagrangian}
\end{gather}
On the other hand, from (\ref{beta_dot_dot_Euler_type}) we find,
using that $Q v_\kappa = 0$, that
\begin{gather}
Q (\ddot{\be}_\kappa \circ \be_\kappa^{-1} ) = Q(\nabla_{v_\kappa}
v_\kappa ),
\nonumber
\end{gather}
and thus
\begin{align}
\begin{split}
 \frac{\partial v_\kappa}{\partial t} 
+ \nabla_{v_\kappa} v_{\kappa} = 
\ddot{\be}_\kappa \circ \be_\kappa^{-1} = 
Q( \ddot{\be}_\kappa \circ \be_\kappa^{-1}  )
+ P( \ddot{\be}_\kappa \circ \be_\kappa^{-1}  )
=  Q(\nabla_{v_\kappa}
v_\kappa ) + P( \ddot{\be}_\kappa \circ \be_\kappa^{-1}  ).
\end{split}
\nonumber
\end{align}
Composing with $\be_\kappa$ and recalling (\ref{beta_dot_dot_Euler_type})
once more:
\begin{align}
\begin{split}
\ddot{\be}_\kappa 
= (Q(\nabla_{v_\kappa} v_\kappa )) \circ \be_\kappa
 + (P( \ddot{\be}_\kappa \circ \be_\kappa^{-1}  ) ) \circ \be_\kappa,
\end{split}
\nonumber
\end{align}
so (using notation \ref{notation_sub})
\begin{align}
\begin{split}
\ddot{\be}_\kappa 
= (Q(\nabla_{\dot{\be}_\kappa \circ \be_\kappa^{-1} } \dot{\be}_\kappa \circ \be_\kappa^{-1} )) \circ \be_\kappa
 + P_{\be_\kappa} \ddot{\be}_\kappa.
\end{split}
\label{beta_dot_dot_Euler_type_Lagrangian}
\end{align}
Using (\ref{Euler_Lagrangian}) and (\ref{beta_dot_dot_Euler_type_Lagrangian})
into (\ref{zeta_minus_eta_dot_with_beta_dot_dot}) produces
\begin{align}
\begin{split}
\dot{\zeta} - \dot{\eta}_\kappa
& = \vartheta_0 - u_{0\kappa} - R_\kappa 
- \int_0^t  P_{\beta_\kappa} \ddot{\be}_\kappa 
\\
& 
+ 
\int_0^t 
 \Big( (Q ( \nabla_{\dot{\zeta} \circ \zeta^{-1}} 
 \dot{\zeta} \circ \zeta^{-1} ) )  \circ \zeta
 -   (Q(\nabla_{\dot{\be}_\kappa \circ \be_\kappa^{-1} } \dot{\be}_\kappa \circ \be_\kappa^{-1} )) \circ \be_\kappa
 \Big ).
\end{split}
\label{difference_dot_flows}
\end{align}
The term $Q(\nabla_{\dot{\be}_\kappa \circ \be_\kappa^{-1} } \dot{\be}_\kappa \circ \be_\kappa^{-1} )) \circ \be_\kappa$ is in $H^s(\Om)$ because $\be_\kappa \in \cD_\mu^s(\Om)$ and so is
 $(Q ( \nabla_{\dot{\zeta} \circ \zeta^{-1}} 
\dot{\zeta} \circ \zeta^{-1} ) )  \circ \zeta$. Integration of 
(\ref{beta_dot_dot_Euler_type_Lagrangian}) shows that 
$\int_0^t  P_{\beta_\kappa} \ddot{\be}_\kappa$ is in $H^s(\Om)$ in that $\dot{\be}_\kappa \in H^s(\Om)$.
We can,  therefore, estimate (\ref{difference_dot_flows}) in $H^s$ and we proceed to do so.

Denote by $T\cD_\mu^s(\Om)$ the tangent bundle of $\cD_\mu^s(\Om)$ and let $Z$ be the map
\begin{align}
\begin{split}
& Z: T\cD_\mu^s(\Om) \rar H^s(\Om, \RR^3), \\
& Z(\xi,X) = ( Q(\nabla_{X\circ \xi^{-1}} X\circ \xi^{-1} ) ) \circ \xi.
\end{split}
\nonumber
\end{align} 
In \cite{EM} it is shown that $Z$ is a smooth map. Since the image of $(\zeta,\dot{\zeta})$ is compact, 
$Z$ is uniformly Lipschitz in a neighborhood 
of $(\zeta, \dot{\zeta})$. Thus
\begin{gather}
\p Z(\zeta, \dot{\zeta} ) - Z(\be_\kappa, \dot{\be}_\kappa ) \p_s
\leq C ( \p \zeta - \be_\kappa \p_s + \p \dot{\zeta} -\dot{\be}_\kappa \p_s
).
\label{Lipschitz_zeta_beta}
\end{gather}
Combining (\ref{estimate_R_kappa}), (\ref{difference_dot_flows}), (\ref{Lipschitz_zeta_beta}), and
our assumptions on $u_{0\kappa}$,
it follows that
\begin{align}
\begin{split}
\p \dot{\zeta} - \dot{\eta}_\kappa \p_s
& \leq \frac{C(\Rin)}{\sqrt{\kappa}}(1+t)
+ 
\p \int_0^t  P_{\beta_\kappa} \ddot{\be}_\kappa 
\p_s
+ 
C(1+t) \int_0^t \p \dot{\zeta} - \dot{\eta}_\kappa \p_s,
\end{split}
\label{estimate_to_iterate}
\end{align}
where we used the fact that the term $\p \zeta - \be_\kappa \p_s$ in 
(\ref{Lipschitz_zeta_beta}) can be estimated in terms of
$\p \dot{\zeta}(0) - \dot{\be}_\kappa(0) \p_s + \p \dot{\zeta} - \dot{\be}_\kappa \p_s$ because of the fundamental theorem of calculus. We also used the fact that 
$\p \dot{\zeta} - \dot{\be}_\kappa \p_s \leq \p \dot{\zeta} - \dot{\eta}_\kappa \p_s 
+ \frac{C}{\sqrt{\kappa}}$. Thus, iterating (\ref{estimate_to_iterate}),
 \begin{align}
\begin{split}
\p \dot{\zeta} - \dot{\eta}_\kappa \p_s
& \leq  \Big( \frac{C(\Rin)}{\sqrt{\kappa}}(1+t)
+ 
\p \int_0^t  P_{\beta_\kappa} \ddot{\be}_\kappa  \p_s
\Big ) e^{C(t+t^2)}.
\end{split}
\label{basic_estimate_zeta_eta_dot}
\end{align}
Integrating (\ref{beta_dot_dot_Euler_type_Lagrangian}) and using the estimates
of section \ref{section_existence} yields
\begin{gather}
\p \int_0^t  P_{\beta_\kappa} \ddot{\be}_\kappa \p_s
\leq C(\Rin).
\label{bound_int_P_beta_beta_dot_dot}
\end{gather}
By the definition of $\cT_\kappa$, $\p \dot{\zeta} \p_s$ remains uniformly 
bounded on $[0,\cT_\kappa)$, and therefore the same holds
for  $\p \dot{\eta}_\kappa \p_s$ in light of (\ref{basic_estimate_zeta_eta_dot})
and (\ref{bound_int_P_beta_beta_dot_dot}). We conclude that $[0,\cT_\kappa)$
is not the maximal interval of existence of the solution of (\ref{free_boundary_full}),
and therefore $T_\kappa > T$. Moreover, this conclusion holds for all $\kappa$
sufficiently large since  (\ref{basic_estimate_zeta_eta_dot})
and (\ref{bound_int_P_beta_beta_dot_dot}) hold for all $\kappa$ sufficiently large.

Next we proceed to show convergence. First we estimate the term
$P_{\beta_\kappa} \ddot{\be}_\kappa $ in $H^{s-\frac{3}{2}}$.
Using (\ref{beta_dot_dot_Euler_type}) and (\ref{system_f_v_v_dot})
we get
\begin{align}
\begin{split}
 P( \ddot{\be}_\kappa \circ \beta_\kappa^{-1}) &+ L_{1\kappa}^{-1}P (
  2 D_{v_\kappa} \nabla\dot{f}_\kappa + D^2_{v_\kappa v_\kappa} \nabla f_\kappa ) \\
  &
  + L_{1\kappa}^{-1} P( L_\kappa Q( \nabla_{v_\kappa} v_\kappa) ) 
 = -L_{1\kappa}^{-1} P  (\nabla p_\kappa\circ (\id + \nabla f_\kappa) ). 
 \end{split}
\label{p_beta_ddot_eq}
\end{align}
All terms in (\ref{p_beta_ddot_eq}) are in
$H^{s-\frac{3}{2}}(\Om)$, and we have
\begin{gather}
\p  L_{1\kappa}^{-1}P (
  2 D_{v_\kappa} \nabla\dot{f}_\kappa + D^2_{v_\kappa v_\kappa} \nabla f_\kappa )  \p_{s-\frac{3}{2}}
  \leq \frac{C(\Rin)}{\sqrt{\kappa}},
\nonumber
\end{gather}
and
\begin{gather}
\p L_{1\kappa}^{-1} P( L_\kappa Q( \nabla_{v_\kappa} v_\kappa) )  \p_{s-\frac{3}{2}} \leq \frac{C(\Rin)}{\kappa},
\nonumber
\end{gather}
after using $PQ = 0$.

Writing $q_\kappa = p_\kappa \circ (\id + \nabla f_\kappa)$, we have
\begin{gather}
\nabla q_\kappa = \nabla p_\kappa \circ (\id + 
\nabla f_\kappa) (\id + D^2 f_\kappa),
\nonumber
\end{gather}
so
\begin{gather}
\nabla p_\kappa \circ (\id + \nabla f_\kappa) = 
\nabla q_\kappa + O(D^2 f_\kappa) \nabla q_\kappa.
\nonumber
\end{gather}
Thus,
\begin{gather}
P(  \nabla p_\kappa \circ (\id + \nabla f_\kappa) ) = P (  O(D^2f_\kappa) 
\nabla q_\kappa ),
\nonumber
\end{gather}
so that 
\begin{gather}
\p P(  \nabla p_\kappa \circ (\id + \nabla f_\kappa) ) \p_{s-\frac{3}{2}} \leq 
\frac{C(\Rin)}{\kappa}.
\nonumber
\end{gather}
These estimates and  (\ref{p_beta_ddot_eq}) imply
\begin{gather}
\p   P_{\beta_\kappa} \ddot{\be}_\kappa \p_{s-\frac{3}{2}}
\leq \frac{C(\Rin)}{\sqrt{\kappa}}.
\label{estimate_P_lower_norm}
\end{gather}
Thus (\ref{basic_estimate_zeta_eta_dot}) and (\ref{estimate_P_lower_norm})
imply that 
$\p \dot{\zeta} - \dot{\eta}_\kappa \p_{s-\frac{3}{2}} \rar 0$
as $\kappa \rar\infty$. The 
interpolation inequality can now be invoked to conclude that 
$\p \dot{\zeta} - \dot{\eta}_\kappa \p_{s-\de} \rar 0$
for any fixed $\de > 0$. We thus also conclude that 
that $\p \int_0^t  P_{\beta_\kappa} \ddot{\be}_\kappa \p_{s-\de} \rar 0$.

Let $J_\si$ be a standard Friedrichs mollifier 
in $\Om$. Recall that if $h$ is defined in $\Om$, $J_\si h$ is given by
 (see, e.g., \cite{TaylorPDE3})
\begin{gather}
J_\si h = R \widetilde{J}_\si E h,
\label{mollifier_bounded_domain}
\end{gather}
where $E: H^r(\Om) \rar H^r(\RR^n)$ and $R: H^r(\RR^n) \rar H^r(\Om)$ are, respectively,
the extension and restriction operators, and $\widetilde{J}_\si$ is the Friedrichs mollifier
in $\RR^n$. Notice that, here in contrast to (\ref{restriction}), one does not lose a
half-derivative upon restriction since $\Om$ is of the same dimension as $\RR^n$. 
Denoting by $\p \cdot \p_{s,\RR^n}$ the Sobolev norm in $\RR^n$, 
standard properties of $\widetilde{J}_\si$ give
\begin{gather}
\p \widetilde{J}_\si h \p_{s,\RR^n} \leq \frac{C}{\si^\frac{3}{2}} \p h \p_{s-\frac{3}{2},\RR^n}
\label{standard_prop_mol}
\end{gather}
if $h$ is in $H^{s-\frac{3}{2}}(\RR^n)$. From (\ref{mollifier_bounded_domain}) and (\ref{standard_prop_mol}),
\begin{align}
\begin{split}
\p J_\si \int_0^t P_{\beta_\kappa} \ddot{\be}_\kappa \p_s
& = \p R \widetilde{J}_\si E \int_0^t P_{\beta_\kappa} \ddot{\be}_\kappa \p_s
\leq C \p \widetilde{J}_\si E \int_0^t P_{\beta_\kappa} \ddot{\be}_\kappa \p_{s,\RR^n} \\
& \leq \frac{C}{\si^\frac{3}{2}} \p E \int_0^t P_{\beta_\kappa} \ddot{\be}_\kappa \p_{s-\frac{3}{2},\RR^n} \\
& \leq \frac{C}{\si^\frac{3}{2}} \p  \int_0^t P_{\beta_\kappa} \ddot{\be}_\kappa \p_{s-\frac{3}{2}}
\leq \frac{C T}{\si^\frac{3}{2}} \p  P_{\beta_\kappa} \ddot{\be}_\kappa \p_{s-\frac{3}{2}}.
\end{split}
\nonumber
\end{align}
Thus, in light of (\ref{estimate_P_lower_norm}) 
\begin{gather}
 \p  J_\si \int_0^t P_{\beta_\kappa} \ddot{\be}_\kappa  \p_s \leq
\frac{C T}{\si^\frac{3}{2}} 
\p P_{\beta_\kappa} \ddot{\be}_\kappa  \p_{s-\frac{3}{2}}
\leq \frac{C(\Rin)}{\si^\frac{3}{2} \sqrt{\kappa}},
\label{estimate_mollifier}
\end{gather} 
where $T$ was absorbed into $C(\Rin)$.
Consider
\begin{gather}
\int_0^t  P_{\beta_\kappa} \ddot{\be}_\kappa = 
\int_0^t  P_{\beta_\kappa} \ddot{\be}_\kappa
- J_\si \int_0^t  P_{\beta_\kappa} \ddot{\be}_\kappa 
+    J_\si \int_0^t P_{\beta_\kappa} \ddot{\be}_\kappa .
\nonumber
\end{gather}
and choose a sequence  of  $\si$'s depending on $\kappa$ by
$\si = \si_\kappa = \frac{1}{\kappa^\frac{1}{6}}$. Then $\si_\kappa \rar 0$
as $\kappa \rar \infty$ and 
 (\ref{estimate_mollifier}) gives
\begin{gather}
 \p  J_{\si_\kappa} \int_0^t P_{\beta_\kappa} \ddot{\be}_\kappa \p_s 
 \leq \frac{C(\Rin)}{\kappa^\frac{1}{4}}.
\nonumber
 \end{gather} 
But 
\begin{gather}
\lim_{\si \rar 0} \p \int_0^t  P_{\beta_\kappa} \ddot{\be}_\kappa
- J_\si \int_0^t  P_{\beta_\kappa} \ddot{\be}_\kappa 
\p_s = 0,
\nonumber
\end{gather}
and thus we conclude that
\begin{gather}
\lim_{\kappa \rar \infty} \p \int_0^t  P_{\beta_\kappa} \ddot{\be}_\kappa 
\p_s = 0.
\nonumber
\end{gather}
Combining this with (\ref{basic_estimate_zeta_eta_dot}) gives
$\p \dot{\zeta} - \dot{\eta}_\kappa \p_s \rar 0$ as $\kappa \rar \infty$.
This immediately produces
$\p \zeta - \eta_\kappa \p_s \rar 0$ as well since
$\zeta(t) = \zeta(0) + \int_0^t \dot{\zeta}$, and similarly for $\eta_\kappa$.
From the regularity of $\eta_\kappa$ the convergence is as stated
in theorem \ref{main_theorem}, finishing the proof.

\subsection{Proof of corollary \ref{corollary_convergence}} From our estimates, we immediately
have
\begin{gather}
\p \nabla f_\kappa \p_{s+\frac{3}{2}} \leq \frac{C}{\kappa},
\nonumber
\end{gather}
and
\begin{gather}
\p \nabla \dot{f}_\kappa \p_s \leq \frac{C}{\sqrt{\kappa}},
\nonumber
\end{gather}
so that $\nabla f_\kappa \rar 0$ in $H^{s+\frac{3}{2}}$ and 
$\nabla \dot{f}_\kappa \rar 0$ in $H^s$, as stated. Combined with  decomposition 
(\ref{decomp_eta}), this gives
\begin{gather}
\p \eta_\kappa - \be_\kappa \p_s \rar 0,
\nonumber
\end{gather}
so that $\be_\kappa \rar \zeta$ in $H^s(\Om)$ in view of the convergence part of theorem
\ref{main_theorem}. Similarly, one gets the convergence 
$\dot{\be}_\kappa \rar \dot{\zeta}$ in $H^s(\Om)$ from (\ref{dot_eta_f_beta}).

\newpage

\end{document}